\def\Q{Q}
\def\wt{\widetilde}
\def\Jac{\mbox{\rm Jac}\:}
\def\Hom{\mbox{Hom}}
\def\rank{\mbox{ rank}}
\def\Ext{\mbox{Ext}}
\def\End{E}
\def\l{\langle\:}
\def\r{\:\rangle}
\def\wt{\widetilde}
\def\QQ{{Q}}
\def\Hom{\mbox{Hom}}
\def\rank{\mbox{ rank}}
\def\Ext{\mbox{Ext}}
\def\End{E}
\def\l{\langle\:}
\def\r{\:\rangle}
\begin{document}

\newtheorem{theorem}{Theorem}[section]
\newtheorem{lemma}[theorem]{Lemma}
\newtheorem{proposition}[theorem]{Proposition}
\newtheorem{corollary}[theorem]{Corollary}
\newtheorem{problem}[theorem]{Problem}
\newtheorem{question}[theorem]{Question}
\newtheorem{example}[theorem]{Example}
\newtheorem{setting}[theorem]{Setting}

\newtheorem{defi}[theorem]{Definitions}
\newtheorem{definition}[theorem]{Definition}
\newtheorem{definitions}[theorem]{Definitions}
\newtheorem{remark}[theorem]{Remark}
\newtheorem{remarks}[theorem]{Remarks}
\newtheorem{ex}[theorem]{Example}
\newtheorem{notation}[theorem]{Notation}

\begin{frontmatter}



\title{One-dimensional stable rings}



\author{Bruce Olberding}

\address{Department of Mathematical Sciences, New Mexico State University,
Las Cruces, NM 88003-8001}

\ead{olberdin@nmsu.edu}

\begin{abstract}
A commutative ring $R$ is  stable provided every   ideal of $R$ containing a nonzerodivisor  is projective as a module over its ring of endomorphisms. 
 The class of stable rings includes the  one-dimensional local Cohen-Macaulay rings of multiplicity at most $2$, as well as  certain rings of higher multiplicity, necessarily analytically ramified. The former are important in the study of modules over Gorenstein rings, while the latter arise in a natural way from generic formal fibers and derivations. 
 
 We characterize one-dimensional   stable local rings in several ways. The characterizations involve the integral closure ${\overline{R}}$ of $R$ and the completion  of $R$ in a relevant ideal-adic topology. For example, we show: If $R$ is a reduced stable ring, then there are exactly two possibilities  for $R$: (1) $R$ is a {\it Bass ring}, that is, $R$ is a reduced Noetherian local ring such that $\overline{R}$ is finitely generated over $R$ and every ideal of $R$ is generated by two elements; or (2) $R$ is a {\it bad stable domain}, that is, $R$ is a one-dimensional stable local domain such that $\overline{R}$ is not a finitely generated $R$-module.

\end{abstract}

\begin{keyword}  local ring \sep Cohen-Macaulay ring \sep stable ideal \sep two-generated ideal


\MSC[2010]  13H10 \sep 13E15 \sep 13A15

\end{keyword}

\end{frontmatter}



\section{Introduction}

  The class of stable rings has a long history dating back at least to the ``Ubiquity'' paper of Bass, where he showed that rings for which every ideal can be generated by two elements are stable \cite[Corollary 7.3]{Bass}.  
Following Lipman \cite{Lipman} and Sally and Vasconcelos \cite{SVBull, SV}, we define an ideal $I$ of a commutative ring $R$ to be {\it stable} if $I$ is projective over its ring of endomorphisms.
A ring $R$ is {\it stable} if every regular ideal of $R$ (that is, every ideal containing a nonzerodivisor) is stable\footnote{Our definition of a stable ring differs slightly from Sally and Vasconcelos \cite{SVBull,SV} in that we require only that every regular ideal is stable, not that every ideal is stable. However, for a reduced one-dimensional Cohen-Macaulay ring, the two definitions agree \cite[p.~260]{Rush2gen}.}.
Independently of Bass, Lipman \cite{Lipman}
 studied stable ideals in one-dimensional semilocal Cohen-Macaulay rings and showed how stability was reflected  in terms of invariants of the ring such as multiplicity, embedding dimension and the Hilbert function. (We recall the definitions of these terms in Section 2.1.) The terminology of ``stable'' ideal originates with Lipman; it reflects the stabilization of a certain chain of infinitely near local rings. Lipman in turn was motivated to introduce these 
 ideals as a way to unify ideas from Arf and Zariski involving singularities of plane curves,   and to produce the largest ring  between a one-dimensional local  Cohen-Macaulay ring $R$ and its integral closure having the same multiplicity sequence as $R$ \cite[Corollary 3.10]{Lipman}.

 Sally and Vasconcelos, motivated by the work of Bass and Lipman, studied  stable Noetherian  local rings in detail in \cite{SVBull, SV}, and showed that a reduced local Cohen-Macaulay ring with finite normalization  is  stable if and only if every ideal of $R$ can be generated by two elements, thus substantiating a conjecture of Bass and proving a partial converse to his theorem mentioned above. 
Reduced Noetherian local rings having finite normalization and every ideal generated by two elements 
 are known in the literature as {\it Bass rings} because of their importance in Bass's article \cite{Bass}. 
 Sally and Vasconcelos \cite[Example 5.4]{SV} gave the first example of a  Noetherian stable domain $R$  of multiplicity $>2$ (and hence without finite normalization). They used   a construction of Ferrand and Raynaud that exhibits $R$ as the preimage of a  derivation over a specific field of characteristic $2$.  
 Heinzer, Lantz and Shah \cite[(3.12)]{HLS} showed that this technique can be modified to produce over this same field of characteristic $2$ for each $e>2$ an analytically ramified Noetherian local  stable domain of multiplicity $e$. 
 More recently, analytically ramified  Noetherian stable local rings  have proved to be useful  theoretical tools  
 for classifying the rank one discrete valuation rings  (DVRs) that arise as the normalization of an analytically ramified  one-dimensional Noetherian local domain, as well as for describing properties of the generic formal fiber of a  Noetherian local domain \cite{GFF, OTrans}; see Remark~\ref{idealization remark} and Theorem~\ref{example list}.\footnote{The definitions of normalization, finite normalization and analytically ramified rings appear at the end of this section and in Section 2.1.}

 The main goal of this article is to tie all these results together.  The Noetherian assumption is not essential to most of our arguments, and thus we use general ideal-theoretic methods to work in a setting in which   
 $R$ is a one-dimensional local ring with regular maximal ideal.\footnote{We use the terminology of ``local ring'' for a ring with a unique maximal ideal. In particular, we do not require a local ring to be Noetherian.}   For example, 
we show in Theorem~\ref{big} that $R$ is  stable if and only if the integral closure $\overline{R}$ is a Dedekind ring with the property that every $R$-submodule containing $R$ is a ring. This generalizes a result of Rush \cite[Theorem 2.4]{RushRep} to rings that are not necessarily Noetherian.
A finer classification is possible by distinguishing when the integral closure of $R$ is a finitely generated $R$-module: 
 We show in Theorems~\ref{new Bass} and~\ref{new infinite} that $R$ is stable and has finite integral closure if and only if a suitable completion $\wt{R}$ of $R$ is a Bass ring, while $R$ is stable without finite integral closure if and only if there is a nonzero prime ideal $P$ of $\wt{R}$ such that $P^2 = 0$ and  $\wt{R}/P$ is a DVR. From this we deduce in Corollary~\ref{reduced cor} that $R$ is reduced and stable if and only if $R$ is a Bass ring or $R$ is a stable domain without finite integral closure. The rings in the first class, the Bass rings, are classical, while the second class, the stable domains without finite integral closure, have a rather transparent structure, as indicated by Theorem~\ref{GFF theorem}.  
 
 We also revisit the two-generator property in Section 5. For $R$ a  one-dimensional local ring with regular maximal ideal $M$, we show that $M^n$  is two-generated (i.e., can be generated by two elements), for some $n\geq 2$, if and only if every $M$-primary ideal of $R$ is two-generated. From this we deduce:
 
\medskip

{\noindent}{\bf Theorem 5.6.} {\it  Let $R$ be a   Cohen-Macaulay ring with maximal ideal $M$. Then these statements are equivalent.
 \begin{itemize}
 \item[{\em (1)}] $M^n$ is two-generated, for some $n \geq 2$.
 \item[{\em (2)}] $R$ has Krull dimension $1$ and  multiplicity at most $
 2$.
 \item[{\em (3)}] $R$ is one of the following:
 \begin{itemize}
 \item[{\em (3a)}] a Bass ring;
 \item[{\em (3b)}] a one-dimensional analytically ramified stable domain; or
 \item[{\em (3c)}] a ring containing a nonzero principal prime ideal $P$ such that $P^2 = 0$ and $R/P$ is a DVR.
 \end{itemize}
 \end{itemize}}
We also generalize a theorem of Greither to rings that are not {\it a priori} Noetherian: If $R$ is a local ring with regular maximal ideal and the  integral closure of $R$ is a Dedekind ring generated by two elements as an $R$-module, then $R$ is a Bass ring (Theorem~\ref{Greither}).

\smallskip

{\it Conventions.}  All rings are commutative with $1$.  The Jacobson radical of the ring $R$ is denoted $\Jac R$. The total ring of quotients of  $R$ is denoted $\QQ(R)$. 
The {\it integral closure of $R$} is the integral closure of $R$  in $\QQ(R)$ and is denoted $\overline{R}$. 
We say that $R$ has {\it finite integral closure} if $\overline{R}$ is a finitely generated $R$-module. 
When $R$ is reduced, $\overline{R}$ is the 
 {\it normalization} of $R$, and  $R$ has {\it finite normalization} if $\overline{R}$ is a finitely generated $R$-module.  
 We write $(R,M)$ for a local ring $R$ with maximal ideal $M$. 


 
 \section{Preliminaries and background}

\subsection{One-dimensional Cohen-Macaulay rings} 

We collect in this subsection properties of one-dimensional  Noetherian local rings that are needed in later sections.  Recall that a local ring $(R,M)$ is {\it analytically unramified} provided that the $M$-adic completion $\widehat{R}$ has no nontrivial nilpotent elements. 

\begin{proposition} \label{2.2} \label{2.5} {\em (\cite[Corollary 1.21]{GHR} and \cite[pp.~263-264]{Ma})} 
Let $(R,M)$ be a local ring with maximal ideal $M$ and integral closure $\overline{R}$. Then
\begin{itemize}
\item[{\em (1)}]  If $R$ is reduced and one-dimensional, and there exists $t>0$ such that $M^t$ is finitely generated, then $R$ is Noetherian. 

\item[{\em (2)}] If $R$ is Noetherian, $M$ is regular and $\overline{R}$ is finitely generated as an $R$-module, then $R$ is reduced. 

\item[{\em (3)}] If $R$ is Noetherian and analytically unramified, then $\overline{R}$ is finitely generated as an $R$-module. 
\end{itemize}
\end{proposition}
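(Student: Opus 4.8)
All three assertions are classical, and the plan is to quote the cited sources; for the record I sketch how one reconstructs the arguments, beginning with the two easiest.

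For (2), suppose $R$ is Noetherian with $M$ regular and $\overline{R}$ a finitely generated $R$-module, let $N$ be the nilradical of $R$, and assume toward a contradiction that $N\neq 0$. Choosing $n\geq 1$ least with $N^n=0$, we have $n\geq 2$, so there is $0\neq z\in N^{n-1}$ with $z^2=0$. Fix a nonzerodivisor $x\in M$. For each $k\geq 0$ the element $z/x^k\in\QQ(R)$ is a root of the monic polynomial $T^2$, hence is integral over $R$ and lies in $\overline{R}$. An equality $Rz/x^k=Rz/x^{k+1}$ would produce $u\in M$ with $z=uz$, i.e. $(1-u)z=0$ with $1-u\in R^{\times}$, forcing $z=0$; so $Rz\subsetneq Rz/x\subsetneq Rz/x^2\subsetneq\cdots$ is a strictly increasing chain of $R$-submodules of $\overline{R}$, contradicting that $\overline{R}$ is a Noetherian $R$-module. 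Hence $R$ is reduced. (One-dimensionality plays no role here.)

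For (3), the idea is to pass to the completion and descend. Since $R$ is analytically unramified, $\widehat{R}$ is a reduced complete Noetherian local ring, hence a Nagata ring, so its integral closure $\overline{\widehat{R}}$ in $\QQ(\widehat{R})$ is a finitely generated $\widehat{R}$-module; this is the input taken from \cite[pp.~263--264]{Ma}. As $\widehat{R}$ is reduced, $\mathrm{Ass}(\widehat{R})=\mathrm{Min}(\widehat{R})$, and going-down for the flat map $R\to\widehat{R}$ shows that each minimal prime of $\widehat{R}$ contracts to a minimal prime of $R$; consequently a nonzerodivisor of $R$ stays a nonzerodivisor in $\widehat{R}$, so $\QQ(R)\otimes_R\widehat{R}$ embeds in $\QQ(\widehat{R})$. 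Then, using flatness of $\widehat R$ over $R$, the algebra $\overline{R}\otimes_R\widehat{R}$ is an $\widehat{R}$-subalgebra of $\QQ(\widehat{R})$ that is integral over $\widehat{R}$, hence lies inside the finite $\widehat{R}$-module $\overline{\widehat{R}}$ and is itself finitely generated over $\widehat{R}$. Faithfully flat descent along $R\to\widehat{R}$ then gives that $\overline{R}$ is a finitely generated $R$-module.

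Part (1) is where I expect the genuine obstacle, and it is the step I would lean on \cite[Corollary~1.21]{GHR} for. The natural approach is to reduce to the domain case: since $\dim R=1$ no minimal prime of $R$ contains $M^t$, so each minimal prime omits one of the finitely many generators of $M^t$; one argues from this that $R$ has finitely many minimal primes $\mathfrak p_1,\dots,\mathfrak p_r$, that the reduced ring $R$ embeds in $\prod_i R/\mathfrak p_i$, and that each $R/\mathfrak p_i$ is a one-dimensional local domain in which the image of $M^t$ is still finitely generated. For a one-dimensional local domain the prime spectrum is $\{(0),\mathfrak m\}$, so by Cohen's theorem Noetherianity comes down to showing $\mathfrak m$ finitely generated; the crux is to promote ``a power of $\mathfrak m$ is finitely generated'' to ``$\mathfrak m$ is finitely generated'', and one must then reassemble Noetherianity of $R$ from that of the quotients $R/\mathfrak p_i$ (equivalently, check that the $\mathfrak p_i$ are finitely generated and invoke Cohen's theorem for $R$). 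Both of these technical points are precisely what the cited corollary delivers, so here I would simply appeal to it rather than redo the argument.
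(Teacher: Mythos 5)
Your proposal is correct. The paper gives no proof of its own for this proposition --- it is stated purely as a citation of \cite[Corollary 1.21]{GHR} and \cite[pp.~263--264]{Ma} --- and your reconstructions of (2) (the strictly ascending chain $Rz \subsetneq Rz/x \subsetneq \cdots$ inside the Noetherian $R$-module $\overline{R}$) and (3) (finiteness of $\overline{\widehat{R}}$ for the reduced complete ring $\widehat{R}$ followed by faithfully flat descent) are the standard arguments from those sources, while deferring the genuinely delicate part (1) to the cited corollary is exactly what the paper does.
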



The {\it embedding dimension} of a  Noetherian local ring $R$ is the minimal number of elements needed to generate its maximal ideal $M$. For sufficiently large values of $n$, the length of $R/M^n$ is a polynomial in $n$ of degree $d = \dim R$.
The product of $d!$ and the  leading coefficient of this polynomial is the {\it multiplicity} of $R$.  See \cite{Ma} for more information on multiplicity.

\begin{proposition}  \label{2.3} \label{2.4} \label{2.5}  {\em (\cite[Propositions 11.1.10 and 11.2.1]{SH}, \cite[Theorem 1.1, p.~49]{Sbook} and \cite[Theorem 10.2, p.~90]{Matlis})} 
Let $(R,M)$ be a one-dimensional Cohen-Macaulay  ring with maximal ideal $M$ and integral closure $\overline{R}$. Then
\begin{itemize}
\item[{\em (1)}]  If $M^2 = mM$ for some $m \in M$, then the embedding dimension and multiplicity of $R$ agree. In particular, $R$ has minimal mulitplicity. 

\item[{\em (2)}] Every ideal of $R$ can be generated by $e$ or fewer elements, where $e$ is the multiplicity of $R$.  

\item[{\em (3)}] $R$ has finite integral closure    if and only if $R$ is analytically unramified. 

\end{itemize}
\end{proposition}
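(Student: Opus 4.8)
These three assertions are classical (they are attributed above to \cite{SH,Sbook,Matlis}); here are the routes I would take.

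\textbf{Part (1).} The plan is a Hilbert-function computation. Since $M^2 = mM$, the element $m$ generates a reduction of $M$; hence $m$ is a parameter and, $R$ being Cohen--Macaulay of dimension one, a nonzerodivisor. An easy induction gives $M^{n+1} = m^n M$ for all $n\ge 1$, and for each $n\ge 1$ multiplication by $m^n$ defines a map $M/M^2 \to M^{n+1}/M^{n+2}$ that is clearly surjective and is injective because $m^n$ is a nonzerodivisor: if $m^n x \in M^{n+2} = m^n M^2$ then $m^n(x-y)=0$ for some $y\in M^2$, forcing $x\in M^2$. Therefore $\ell(M^n/M^{n+1}) = \ell(M/M^2)$ for every $n\ge 1$, so the Hilbert--Samuel function of $R$ is eventually the constant $\ell(M/M^2)$, and the multiplicity of $R$ equals $\ell(M/M^2) = \dim_{R/M}(M/M^2)$, which by Nakayama is the embedding dimension. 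In dimension one the general bound reads $e \ge \mathrm{embdim}\,R - \dim R + 1 = \mathrm{embdim}\,R$, so the equality just obtained is precisely the statement that $R$ has minimal multiplicity.

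\textbf{Part (2).} Write $\mu(I)$ for the minimal number of generators of $I$; I treat the essential case, where $I$ is $M$-primary (equivalently, regular). After the faithfully flat base change $R\to R[t]_{M[t]}$, which alters neither $\mu(I)$ nor the multiplicity, we may assume the residue field infinite and pick $x\in M$ generating a minimal reduction of $M$. Then $x$ is a nonzerodivisor (as $R$ is one-dimensional Cohen--Macaulay) and $\ell(R/xR) = e$, since $(x)$ is a reduction of $M$ and $R$ is Cohen--Macaulay. The crux is the identity $\ell(J/yJ) = \ell(R/yR)$, valid for any $M$-primary ideal $J$ and any nonzerodivisor $y$: applying the snake lemma to multiplication by $y$ on $0\to J\to R\to R/J\to 0$ expresses $\ell(J/yJ)$ as $\ell(R/yR) + \ell((J:y)/J) - \ell(R/(J+yR))$, and the two correction terms are equal because ``multiplication by $y$'' on the finite-length module $R/J$ has kernel $(J:y)/J$ and cokernel $R/(J+yR)$ of equal length. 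Taking $J=I$, $y=x$ and using $xI\subseteq MI$ now gives
\[
\mu(I) = \ell(I/MI) \;\le\; \ell(I/xI) \;=\; \ell(R/xR) \;=\; e .
\]

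\textbf{Part (3).} The implication ``analytically unramified $\Rightarrow$ finite integral closure'' is Proposition~\ref{2.2}(3), which I would simply invoke. For the converse, assume $\overline R$ is a finitely generated $R$-module. First, $R$ is reduced: if $n\in R$ were nonzero with $n^2 = 0$, then, choosing a nonzerodivisor $x\in M$, every $n/x^k\in\QQ(R)$ is nilpotent, hence integral over $R$, so $\sum_{k\ge 0}R\,(n/x^k)$ is a finitely generated submodule of $\overline R$; expressing $n/x^{K+1}$ in terms of the $n/x^j$ with $j\le K$ and clearing denominators yields $n\in nM$, whence $n=0$ by Nakayama, a contradiction. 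Now $\overline R$ is the normalization of the reduced one-dimensional Noetherian semilocal ring $R$ and is module-finite over it, so $\overline R$ is a finite direct product of semilocal Dedekind domains, and its $M$-adic completion $\widehat{\overline R}$ is a finite product of complete discrete valuation rings, in particular reduced. Since $\overline R$ is finitely generated over $R$, the natural map $\widehat R\to\widehat{\overline R}$ is injective, so $\widehat R$ is reduced; that is, $R$ is analytically unramified.

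In all of this the only ingredient that is not elementary bookkeeping is the implication quoted from Proposition~\ref{2.2}(3), which rests on Noether's finiteness theorem for the integral closure of a complete reduced local ring; the other point that genuinely deserves care is extending the bound in part (2) from $M$-primary ideals to arbitrary ideals (via reduction modulo the minimal primes), which is why I have isolated the essential case above.
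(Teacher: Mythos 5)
The paper gives no proof of this proposition; it is quoted from Swanson--Huneke, Sally's book, and Matlis, so there is no internal argument to compare yours against. On its own terms, your part (1) is correct and is the standard Hilbert-function computation, and your part (3) is also correct: the reducedness step is exactly the content of Proposition~\ref{2.2}(2) (which you could simply have cited), and the passage from a module-finite reduced normalization to a reduced completion via $\widehat{R}\hookrightarrow \overline{R}\otimes_R\widehat{R}$ is the classical argument.

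The one genuine gap is in part (2). The statement concerns \emph{every} ideal, and your argument covers only the regular ($M$-primary) ones; a nonzero ideal contained in a minimal prime of $R$ is not reached, and the closing remark that the remaining case follows ``via reduction modulo the minimal primes'' is not a proof: passing to $R/P$ changes both the ring and its multiplicity, and it is not explained how a generation bound downstairs would descend to $I$ itself. (The paper does use the full strength of (2) for ideals that need not be regular, e.g.\ in Theorem~\ref{power cor}.) The gap is localized and can be closed with the same snake-lemma sequence you already wrote: for an arbitrary nonzero ideal $I$ and $x$ a principal minimal reduction of $M$, the sequence
$$0 \to (I:x)/I \to I/xI \to R/xR \to R/(I+xR) \to 0$$
has all four terms of finite length, because $x$ lies in no minimal prime (being a nonzerodivisor in a one-dimensional Cohen--Macaulay ring), so the first two modules vanish after localizing at any nonmaximal prime. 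It then suffices to know $\ell\bigl((I:x)/I\bigr) \le \ell\bigl(R/(I+xR)\bigr)$, which is the inequality $\ell(0:_N x) \le \ell(N/xN)$ for $N = R/I$; this holds because the difference is the Koszul Euler characteristic $\chi(x;N) = e(xR;N) \ge 0$ of the parameter $x$ on the finitely generated module $N$. With that supplement, $\mu(I) = \ell(I/MI) \le \ell(I/xI) \le \ell(R/xR) = e$ for every ideal, and the proposal becomes a complete proof.
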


\subsection{Completions  of a one-dimensional local ring}

For a one-dimensional local ring $(R,M)$ that is not Noetherian, the $M$-adic completion $\widehat{R}$  of $R$ can be too coarse to contain useful information about $R$. For example, if $M = M^2$, then $\widehat{R} = R/M$. For this reason we work with another completion 
that agrees with the $M$-adic completion of $R$ if $R$ is one-dimensional and Noetherian.

\begin{notation} \label{completion notation}
{\em Let $R$ be a one-dimensional local ring with regular maximal ideal $M$, and let $m$ be a nonzerodivisor in $M$. 
We denote by $\wt{R}$ the $mR$-adic completion of $R$ and by $\widehat{R}$ the $M$-adic completion of $R$.  Since $R$ has Krull dimension $1$ and $m$ is a nonzerodivisor, $mR$ is $M$-primary, and hence
the ring $\wt{R}$ is independent of the choice of  nonzerodivisor $m \in M$. } 
\end{notation}

\begin{remark} \label{new remark} {\em Let $R$ be a one-dimensional local ring with regular maximal ideal $M$. 
If  (i) $M$ is finitely generated, or (ii) $M$ is stable, then $\widetilde{R}$ can be identified with $\widehat{R}$. In case (i)  some power of $M$ is contained in a principal regular ideal $mR$, and so the $mR$-adic and $M$-adic topologies agree on $R$. For case (ii), Lemma~\ref{primary stable}(6) implies that $M^2 = mM$ for some  $m \in M$. Thus $M^2 \subseteq mR$  and so again the two topologies coincide. 
}
\end{remark}

\begin{proposition}  \label{2.6} \label{2.7}  \label{2.8} {\em (\cite[Theorem 2.1(3), p.~11]{Matlis}, \cite[Theorem 2.1(4), p.~11]{Matlis} and \cite[Theorem 2.8(1), p.~19]{Matlis})}
Let $R$ be a one-dimensional  local ring   with regular maximal ideal $M$. 

\begin{itemize}
\item[{\em (1)}]  If  $I$ is a regular ideal of $R$, then $I\wt{R}$ is a regular ideal of $\wt{R}$. 

\item[{\em (2)}] 
 If $\lambda:R \rightarrow \wt{R}$ denotes the canonical mapping, then  $\wt{R} = \lambda(R) + x\wt{R}$ for all nonzerodivisors $x \in R$.

\item[{\em (3)}]  If $I$ is a regular ideal of $R$, then the mapping $\lambda$ in (2) induces an isomorphism $R/I \cong \wt{R}/I\wt{R}$ and $I = \lambda^{-1}(I\wt{R})$. 
  
\end{itemize}
\end{proposition}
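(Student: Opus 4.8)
I would prove this directly (the statement is quoted from Matlis, but I will indicate a self-contained argument, noting the care needed since $R$ is not assumed Noetherian). The crux is the observation that $\sqrt{xR}=M$ for every nonzerodivisor $x\in M$: a nonzerodivisor lies in no minimal prime, so every prime over $xR$ properly contains a minimal prime, and since $\dim R=1$ this forces $M$ to be the only prime over $xR$; hence $R/xR$ is zero-dimensional and local, and $M/xR$ is its nilradical. In particular $m^k\in xR$ and $x^\ell\in mR$ for suitable $k,\ell$, so the $xR$-adic and $mR$-adic topologies on $R$ coincide, and the canonical map identifies $\wt R$ with $\varprojlim_n R/x^nR$ for \emph{every} nonzerodivisor $x\in M$. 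The plan is to fix such an $x$ and work throughout with this description of $\wt R$; when $x$ is a unit each assertion is trivial, so in (2) one may assume $x\in M$.

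For (2): represent $\alpha\in\wt R=\varprojlim_n R/x^nR$ by a compatible sequence $(\bar a_n)_n$ (so $a_{n+1}\equiv a_n\pmod{x^nR}$) and set $r=a_1$. Then $a_n\equiv r\pmod{xR}$, so $a_n-r=xb_n$ for some $b_n\in R$; the congruences give $x(b_{n+2}-b_{n+1})\in x^{n+1}R$, and since $x$ is a nonzerodivisor, $b_{n+2}\equiv b_{n+1}\pmod{x^nR}$. Thus $\beta:=(\overline{b_{n+1}})_n$ defines an element of $\wt R$ with $x\beta=(\overline{a_{n+1}-r})_n=\alpha-\lambda(r)$, giving $\wt R=\lambda(R)+x\wt R$.

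For (1) and (3): choose a nonzerodivisor $x$ in the regular ideal $I$, so that $xR\subseteq I$ and in fact $m^kR\subseteq xR\subseteq I$ (so $I$ is open in the $mR$-adic topology). First, $\lambda(x)$ is a nonzerodivisor of $\wt R$: if $\lambda(x)(\bar b_n)_n=0$ then $xb_n\in x^nR$, hence $b_n\in x^{n-1}R$, and combining this with $b_n\equiv b_{n+1}\pmod{x^nR}$ and $b_{n+1}\in x^nR$ gives $b_n\in x^nR$ for all $n$; this proves (1), since $\lambda(x)\in I\wt R$. For (3), the argument in (2) applied to elements $\alpha$ with $\bar a_1=0$ shows $x\wt R=\Ker(\wt R\to R/xR)$, so $\lambda$ induces $\wt R/x\wt R\cong R/xR$; under this isomorphism $I\wt R=\lambda(I)\wt R+x\wt R$ corresponds to $I/xR$, whence $\lambda$ induces $\wt R/I\wt R\cong (R/xR)/(I/xR)\cong R/I$. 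Finally $\lambda(r)\in I\wt R$ sends $r$ into $I/xR$ modulo $xR$, so $r\in I+xR=I$, i.e.\ $I=\lambda^{-1}(I\wt R)$.

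The step I expect to be most delicate is exactly the passage to the non-Noetherian setting: one cannot invoke Krull's principal ideal theorem, Artin--Rees, or finiteness of $\overline R$, so everything must be routed through filtrations by powers of a single nonzerodivisor, legitimized by the elementary facts that $M$ is the radical of every regular principal ideal and that a nonzerodivisor of $R$ remains a nonzerodivisor in $\wt R$. Granting these, (1)--(3) reduce to the bookkeeping above.
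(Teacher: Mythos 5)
Your proof is correct, and it takes a genuinely different route from the paper: the paper does not prove (1)--(3) at all, but instead observes that the regular ideals of $R$ are cofinal with the powers $m^nR$ (precisely because $\dim R=1$, $R$ is local, and $M$ is regular), so that $\wt{R}$ coincides with Matlis's ``$R$-completion'' $\varprojlim R/I$ ($I$ ranging over regular ideals), and then cites Matlis's book for the three statements. Your argument supplies the content of those citations by direct computation in $\varprojlim_n R/x^nR$. The pivotal reductions are sound: a nonzerodivisor lies in no minimal prime, so $\sqrt{xR}=M$ and the $xR$-adic and $mR$-adic filtrations are mutually cofinal; the telescoping/division-by-$x$ argument for (2) is valid because $x$ is a nonzerodivisor; the verification that $\lambda(x)$ remains a nonzerodivisor in $\wt{R}$ (combining $b_{n+1}\in x^nR$ with $b_n\equiv b_{n+1}\pmod{x^nR}$) is exactly what is needed for (1); and identifying $x\wt{R}$ with $\Ker(\wt{R}\to R/xR)$, together with $\wt{R}=\lambda(R)+x\wt{R}$, gives $I\wt{R}=\lambda(I)+x\wt{R}=\pi^{-1}(I/xR)$ and hence both assertions of (3). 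What your approach buys is self-containedness and transparency about where one-dimensionality and regularity of $M$ are used; what the paper's approach buys is brevity and access to the rest of Matlis's machinery for the $R$-completion, which is reused elsewhere (e.g., the identification of $\wt{R}$ with $\Hom_R(Q/R,Q/R)$ in the proof of Theorem 4.6).
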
 

\begin{proof}  Matlis proves the statements in the proposition for  what he calls the ``R-completion'' $H$  of  a commutative ring $R$; that is, $H = \varprojlim R/I$, where $I$ ranges over the regular ideals of $R$. Since in our case $R$ is local of Krull dimension one and the maximal ideal $M$ of $R$ is regular, the $R$-completion of $R$ is simply $\wt{R}$.   Thus the cited references apply also to $\wt{R}$. 
\end{proof}

\subsection{Quadratic extensions}

Rush \cite[Proposition 2.1]{RushRep} has shown that, if $R$ is a stable ring,  every $R$-submodule of $\overline{R}$ that contains $R$ is a ring. In this section we review some properties of extensions that share this property.  

\begin{definition}  \label{quad def}
{\em
 An extension $R \subseteq S$ of rings is {\it quadratic} if  $xy \in xR + yR + R$ for all $x,y \in S$; equivalently,  every $R$-module between $R$ and $S$ is a ring.   
}
\end{definition}


\begin{remarks}  \label{quad remark} {\em 

(1) Quadratic  extensions were considered by Handelman \cite{Handelman} and Rush \cite{RushRep}, and more recently in \cite{GFF, OTrans, OPres}.

(2) 
If an extension $R \subseteq S$ of rings is quadratic, then, for every $x \in S$, we have $x^2 \in xR + R$; that is, every $x \in S$ is a root of a monic  polynomial  of degree at most $2$ with coefficients in $R$. Thus every quadratic extension is an integral extension.


(3) If $R \subseteq S \subseteq T$ is an extension of rings and $R \subseteq T$ is quadratic, then $R \subseteq S$ is quadratic. Also, if $I$ is an ideal of $S$ that is also an ideal of $R$, then $R/I \subseteq S/I$ is quadratic if and only if $R \subseteq S$ is quadratic.}
\end{remarks}

A key tool for analyzing  quadratic rings is  
 Handelman's  classification of finite-dimen\-sio\-nal algebras that are  quadratic extensions of a base field. 

\begin{lemma}
{\em (Handelman \cite[Lemma 5]{Handelman})} Let $F$ be a field and
let $S$ be a finite-dim\-en\-si\-onal  $F$-algebra such that $F
\subseteq S$ is a quadratic extension. Then $S$ is isomorphic
as an $F$-algebra to one of the following:  
\begin{itemize}
\item[{\em (i)}] $F$,

\item[{\em (ii)}]  a field extension of $F$ of degree $2$, 

\item[{\em  (iii)}]  
 a local ring with square zero maximal ideal and residue
field isomorphic to $F$,

\item[{\em (iv)}]  $F \times F$,  or 

\item[{\em (v)}]  $F \times F \times F$, in which case  $F = {\mathbb{F}}_2$. 
\end{itemize}\label{quadratic}
\end{lemma}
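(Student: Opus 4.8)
The plan is to localize the problem and then classify. Since $S$ is a finite-dimensional algebra over the field $F$ it is Artinian, so $S \cong T_1 \times \cdots \times T_n$ with each $T_i$ a local Artinian $F$-algebra. Each $T_i$ is a homomorphic image of $S$ into which $F$ embeds (the map $F \to T_i$ is nonzero, hence injective), and the defining identity $xy \in xF + yF + F$ of a quadratic extension passes to ring quotients; so each $F \subseteq T_i$ is again quadratic. It then remains to (a) classify the local factors and (b) decide which products of such factors stay quadratic.

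For (a), let $(T,\mathfrak m)$ be local, finite-dimensional over $F$, with $F \subseteq T$ quadratic, and put $L = T/\mathfrak m$. Since $1 \notin \mathfrak m$ we have $F \cap \mathfrak m = 0$, so $F \subseteq L$, and reducing the quadratic identity modulo $\mathfrak m$ shows $F \subseteq L$ is a quadratic field extension. First I would show such an extension has degree at most $2$: if $[L:F] \ge 3$, choose $\alpha \in L \setminus F$ and $\beta \in L \setminus F(\alpha)$; then $1,\alpha,\beta$ are $F$-independent, while $\alpha^2 \in \alpha F + F$, $\beta^2 \in \beta F + F$ and $\alpha\beta \in \alpha F + \beta F + F$ show that $F + F\alpha + F\beta$ is an $F$-subalgebra of $L$, hence a field of dimension at most $3$ containing the degree-$2$ subfield $F(\alpha)$ — absurd. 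Next I would treat $\mathfrak m$. For $x \in \mathfrak m$, writing $x^2 = cx + d$ with $c,d \in F$ (Remark~\ref{quad remark}(2)) forces $d = x^2 - cx \in \mathfrak m \cap F = 0$, so $x(x-c) = 0$; as $x - c$ is a unit when $c \ne 0$, we get $x^2 = 0$ in every case. For $x,y \in \mathfrak m$, write $xy = \alpha x + \beta y + \gamma$; reduction mod $\mathfrak m$ gives $\gamma = 0$, and then multiplying $xy = \alpha x + \beta y$ by $x$ and by $y$ and using $x^2 = y^2 = 0$ yields $\beta\,xy = \alpha\,xy = 0$, hence $xy = 0$. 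Thus $\mathfrak m^2 = 0$. Finally, if $[L:F] = 2$ and $\mathfrak m \ne 0$, pick $0 \ne x \in \mathfrak m$ and $t \in T$ reducing to a generator of $L/F$; reducing $tx = p + qt + rx$ mod $\mathfrak m$ forces $p = q = 0$ (as $1,\bar t$ are $F$-independent in $L$), so $(t - r)x = 0$ with $t - r$ a unit, giving $x = 0$ — a contradiction. Hence $T$ is $F$ (case (i)), a degree-$2$ field extension (case (ii)), or local with square-zero maximal ideal and residue field $F$ (case (iii)).

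For (b), assume $n \ge 2$. If two factors, say $T_1$ and $T_2$, were both unequal to $F$, pick $y_i \in T_i \setminus F$ and pass to the quadratic quotient $T_1 \times T_2$; with $x = (1,0)$ and $y = (y_1, y_2)$ the second coordinate of any element of $xF + yF + F$ forces the $yF$-coefficient to be $0$ (since $y_2 \notin F$), and then the first coordinate forces $y_1 \in F$, so $xy = (y_1,0) \notin xF + yF + F$ — a contradiction. Hence at most one factor differs from $F$. If exactly one factor $T \ne F$, then $T \times F$ is a quadratic quotient of $S$ and so must itself be quadratic; but it is not — take $x = y = (\theta,0)$ with $\theta$ a generator of $L/F$ in case (ii) (here $\theta^2 = \alpha\theta + \beta$ with $\beta \ne 0$, since $\theta(\theta-\alpha)=0$ is impossible in a field for $\theta \notin F$), or $x = y = (m,1)$ with $0 \ne m \in \mathfrak m$ in case (iii), and in each case $x^2 \notin xF + F$. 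So every $T_i = F$ and $S \cong F^n$. If $F$ contains an element $c \notin \{0,1\}$, then $x = (0,1,c) \in F^3$ has $x^2 \notin xF + F$ (a quadratic over $F$ cannot vanish at $0$, $1$ and $c$), so $F^3$, hence $F^n$ for every $n \ge 3$, fails to be quadratic. If $F = \mathbb{F}_2$, then $\mathbb{F}_2^4$ fails: for $x = (1,1,0,0)$ and $y = (1,0,1,0)$ the span $xF + yF + F$ consists of vectors with an even number of $1$'s, excluding $xy = (1,0,0,0)$, so $F^n$ fails for every $n \ge 4$. Therefore $n \le 2$, or $n = 3$ with $F = \mathbb{F}_2$, giving cases (iv) and (v).

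The conceptual engine is that the quadratic property descends along surjections $S \to S'$ with $F \hookrightarrow S'$, which is exactly what lets us reduce first to the local factors and then to two- or three-factor products. The step I expect to demand the most care is (b): one must choose test elements judiciously — an idempotent-type $x = (1,0)$ to separate factors, a diagonal $x = y$ inside a single ``nontrivial $\times\, F$'' product, and the parity witness in $\mathbb{F}_2^4$ — because unlucky choices (for instance $x \ne y$) frequently do satisfy the inclusion and hide the obstruction.
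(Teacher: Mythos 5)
Your proof is correct. Note that the paper itself gives no argument for this lemma --- it is quoted verbatim from Handelman with a citation --- so there is no internal proof to compare against; your write-up is a complete, self-contained replacement, and its skeleton (decompose the Artinian algebra into local factors, observe that the quadratic property descends along surjections $S \to S'$ under which $F$ still embeds, classify the local pieces, then bound the number of factors) is essentially the route Handelman's original argument takes. I checked the delicate points and they all hold: the three-dimensional subalgebra $F + F\alpha + F\beta$ of $L$ really is a field containing the degree-$2$ subfield $F(\alpha)$, which kills $[L:F]\ge 3$; the deduction $xy=0$ from $\alpha\,xy=\beta\,xy=0$ and $xy=\alpha x+\beta y$ does require the implicit case split (if $\alpha$ or $\beta$ is nonzero it is a unit of $F$, and if both vanish the relation itself gives $xy=0$), and that split is sound; the witnesses $(1,0)$ and $(y_1,y_2)$, the diagonal elements $(\theta,0)$ and $(m,1)$, and the parity obstruction in ${\mathbb{F}}_2^4$ all do what you claim. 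Two tiny presentational remarks: you should say explicitly that $F\cap\mathfrak m=0$ is what makes ``reduce mod $\mathfrak m$'' give honest coefficients in $F$ (you use this silently when concluding $\gamma=0$ and $p=q=0$), and you could observe that the converse inclusions --- that cases (i)--(v) actually are quadratic --- are not needed for the statement as phrased, which only asserts that $S$ must be one of them.
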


\begin{remark} \label{quadratic remark} {\em   If $K/F$ is a finite field extension that is a quadratic extension of rings in the sense of Definition~\ref{quad def}, then from Lemma~\ref{quadratic} it follows that  $[K:F] \leq  2$. Thus an extension of fields that is quadratic  in the sense of Definition~\ref{quad def} agrees with the usual notion of a quadratic field extension.}
\end{remark}


The following proposition is deduced in \cite{OFS} from Handelman's lemma. 

\begin{proposition} \label{at most 3} {\em \cite[Proposition 3.3]{OFS}} If $R \subseteq S$ is a quadratic extension, then there are at most three prime ideals of $S$ lying over any prime ideal of $R$. 
\end{proposition}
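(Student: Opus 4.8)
The plan is to reduce the statement to Handelman's classification (Lemma~\ref{quadratic}) by passing to a suitable field. Let $Q = R \setminus \bigcup$ be a prime of $R$, localize, and then factor out to obtain a field at the base. Concretely, given a prime ideal $\ff p$ of $R$, set $A = R_{\ff p}$ and let $F = A/\ff p A$ be its residue field. The primes of $S$ lying over $\ff p$ correspond, after localizing $S$ at the multiplicative set $R \setminus \ff p$, to the maximal ideals of the semilocal ring $S_{\ff p} := S \otimes_R R_{\ff p}$ that contract to $\ff p A$; and since (by Remark~\ref{quad remark}(2)) $R \subseteq S$ is integral, $S_{\ff p}$ is integral over the local ring $A$, so every maximal ideal of $S_{\ff p}$ contracts to $\ff p A$. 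Hence the primes of $S$ over $\ff p$ are exactly the maximal ideals of $S_{\ff p}$.

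Next I would observe that localization and quotients preserve the quadratic property. Localization: if $R \subseteq S$ is quadratic, then for a multiplicative set $W \subseteq R$ the extension $R_W \subseteq S_W$ satisfies $xy \in xR_W + yR_W + R_W$ for $x,y \in S_W$ (clear denominators, use the relation in $S$, divide back), so it is quadratic. Quotients: by Remark~\ref{quad remark}(3), modding out by an ideal of $S$ that is also an ideal of $R$ preserves the quadratic property; applying this with the ideal $\ff p A \cdot S_{\ff p}$ — which is an ideal of $S_{\ff p}$ and, as the extension of $\ff p A$, an ideal of $A$ — we get that $F = A/\ff p A \subseteq \overline{S} := S_{\ff p}/\ff p A S_{\ff p}$ is a quadratic extension. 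The maximal ideals of $S_{\ff p}$ lying over $\ff p A$ biject with the maximal ideals of $\overline{S}$.

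Now $\overline{S}$ is an $F$-algebra and a quadratic extension of $F$; the remaining point is that it is \emph{finite-dimensional} over $F$, so that Handelman's lemma applies and forces $\overline{S}$ to be one of the five listed algebras, each of which has at most three maximal ideals (three only in case (v)). Finite-dimensionality is where I expect to spend the real effort: a quadratic extension need not be module-finite in general, but a quadratic extension of a \emph{field} is. One way to see this: if $F \subseteq B$ is quadratic, pick any $x_1 \in B$; then $F[x_1]$ has $F$-dimension at most $2$. Inductively, if $F[x_1,\dots,x_k]$ is finite-dimensional, adjoin $x_{k+1}$: using the quadratic relations among the finitely many products $x_i x_{k+1}$ and $x_{k+1}^2$ one shows $F[x_1,\dots,x_{k+1}]$ is still finite-dimensional over $F$, in fact spanned by the $x_i$'s, $x_{k+1}$, products $x_i x_{k+1}$, and $1$. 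The subtlety is that $B$ itself may not be finitely generated as an $F$-algebra — but we only need finitely many generators to \emph{separate} the (finitely many, by a preliminary bound) maximal ideals, or alternatively one argues directly that any quadratic $F$-algebra that is a domain is a field of degree $\le 2$ and reduces the general case via the nilradical and a CRT decomposition. In any event, once finite-dimensionality over $F$ is in hand, Lemma~\ref{quadratic} immediately gives at most three maximal ideals of $\overline{S}$, hence at most three primes of $S$ over $\ff p$, completing the proof. The main obstacle, then, is the finite-dimensionality reduction; everything else is bookkeeping about localization and quotients of quadratic extensions.
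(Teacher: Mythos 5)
The paper does not prove this proposition; it cites \cite[Proposition 3.3]{OFS}, so I can only assess your argument on its own terms. Your reduction to the residue field is the right architecture: primes of $S$ over $\ff p$ correspond to maximal ideals of $\overline{S} := S_{\ff p}/\ff p S_{\ff p}$, and the image of $F = A/\ff p A$ in $\overline{S}$ again sits in $\overline{S}$ as a quadratic extension. Two small repairs there: $\ff p A\cdot S_{\ff p}$ is an ideal of $S_{\ff p}$ but in general \emph{not} an ideal of $A$ (e.g.\ $2\mathbb{Z}_{(2)}[i]\not\subseteq \mathbb{Z}_{(2)}$), so Remark~\ref{quad remark}(3) does not literally apply; instead one checks directly that the quadratic condition survives any quotient of $S_{\ff p}$, and one uses lying over to see that $\ff p S_{\ff p}\cap A=\ff p A$, so that $F$ really does embed in $\overline S$ whenever $\overline S\ne 0$.

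The genuine gap is the claim that a quadratic extension of a field is finite-dimensional. This is false: if $V$ is any infinite-dimensional $F$-vector space and $B=F\oplus V$ with $V^2=0$, then for $u=a+v$ and $u'=a'+v'$ one has $uu'=aa'+av'+a'v=a'u+au'-aa'\in uF+u'F+F$, so $F\subseteq B$ is quadratic yet $B$ is infinite-dimensional (this is exactly case (iii) of Lemma~\ref{quadratic} with the finiteness hypothesis dropped, and such rings are ubiquitous in this paper). Your induction only shows that every \emph{finitely generated} $F$-subalgebra is finite-dimensional, namely $F[x_1,\dots,x_n]=F+x_1F+\cdots+x_nF$; and your first proposed escape, separating "the (finitely many, by a preliminary bound) maximal ideals," is circular, since that bound is the statement being proved. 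The argument can be completed along the lines you hint at: if $\overline S$ had four distinct maximal ideals $N_1,\dots,N_4$, choose finitely many elements $x_{ij}\in N_i\setminus N_j$ and set $C=F[\{x_{ij}\}]$; then $C$ is a finite-dimensional quadratic $F$-algebra (by your induction and Remark~\ref{quad remark}(3)) whose four primes $N_i\cap C$ are distinct and maximal because $C$ is integral over $F$, contradicting Lemma~\ref{quadratic}. Alternatively, map $\overline S$ onto $\prod_{i=1}^4\overline S/N_i$ by the Chinese remainder theorem and check that $x=(1,1,0,0)$, $y=(1,0,1,0)$ violate $xy\in xF+yF+F$. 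Until one of these steps is supplied, the proof as written rests on a false lemma.
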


\subsection{Stable rings}

Noetherian  stable rings have been studied in a number of contexts; cf.~\cite{AHP, Bass, DK, Goe, Handelman,HLS,Lipman,Rush2gen, SVBull, SV,Zan2, Zan}. Also, the related class of archimedean stable domains have been investigated recently by Gabelli and Roitman \cite{GR} and shown to generalize features of one-dimensional stable domains, and hence Noetherian stable domains, to a wider setting. 

The following theorem  summarizes some of the results discussed in the introduction.

\begin{theorem} \label{Bass thm}  {\em  (cf.~\cite[Corollary 7.3]{Bass}, \cite{DK}, \cite[Theorem 1.4]{Rush2gen} and  \cite[Theorem 2.4]{SVBull}) } {Let $R$ be a reduced Cohen-Macaulay ring. 
If every ideal of $R$ can be generated by $2$ elements (equivalently, $R$ has Krull dimension $\leq 1$ and multiplicity $\leq 2$), then $R$ is a stable ring. Conversely, if 
 $R$ is a  reduced stable ring with finite normalization, then $R$ is a Bass ring.}
\end{theorem}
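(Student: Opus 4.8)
The statement bundles three classical facts, and the plan is to assemble them while moving carefully between the conditions ``every ideal is two-generated'', ``multiplicity at most two'' and ``stable''. The presence of ``multiplicity'' forces $R$ to be local, and a Cohen--Macaulay ring is Noetherian; if $\dim R=0$ then $R$ is a field and all assertions are trivial, so assume $\dim R=1$. The parenthetical equivalence I would settle first: that $\dim R\le 1$ and multiplicity $\le 2$ imply every ideal is two-generated is exactly Proposition~\ref{2.3}(2); conversely, if every ideal of $R$ is two-generated then $R$ is Noetherian and the embedding dimension is at most two, so $\dim R\le 2$, and if $\dim R=2$ then $M$ is generated by a system of parameters, $R$ is a two-dimensional regular local ring, and $M^{2}$ needs three generators --- a contradiction; hence $\dim R=1$, and since $\mu(M^{n})=\ell(M^{n}/M^{n+1})=e$ for $n\gg 0$, where $e$ is the multiplicity, the two-generation of the ideals $M^{n}$ forces $e\le 2$.

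For the forward implication ``every ideal two-generated $\Rightarrow$ $R$ stable'' I would quote \cite[Corollary 7.3]{Bass} (see also \cite{DK}); alternatively it follows from the equivalence together with Lipman's analysis: a one-dimensional Cohen--Macaulay local ring of multiplicity at most two has $M^{2}=mM$ for a nonzerodivisor $m$ (a short reduction argument, after a harmless residue-field extension if necessary), and for such a ring every regular ideal $I$ satisfies $I^{2}=aI$ for some $a\in I$ and is therefore projective over $(I:I)$; see \cite{Lipman,Rush2gen}.

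The converse is the substantive direction, and there the plan is to follow Drozd--Kirichenko \cite{DK} and Sally--Vasconcelos \cite{SVBull}. By the equivalence it suffices to show $\dim R\le 1$ --- part of the basic structure theory of stable rings --- and multiplicity $\le 2$. The engine is Rush's theorem \cite[Proposition 2.1]{RushRep}: for a stable ring, $R\subseteq\overline{R}$ is a quadratic extension in the sense of Definition~\ref{quad def}. Since $\overline{R}$ is module-finite over $R$ (this is where finite normalization is used), the fibre $\overline{R}/M\overline{R}$ is a finite-dimensional algebra over $k=R/M$ and a quadratic extension of $k$; by Handelman's classification (Lemma~\ref{quadratic}) it is one of five types, so (cf.\ also Proposition~\ref{at most 3}) $\overline{R}$ has at most three maximal ideals and needs at most three generators as an $R$-module. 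In Handelman's types (i)--(iv) --- equivalently, when $\overline{R}$ is two-generated over $R$ --- a theorem of Greither (see Theorem~\ref{Greither} below), applicable since $\overline{R}$ is Dedekind, gives that $R$ is a Bass ring. The remaining type (v), $\overline{R}/M\overline{R}\cong\mathbb{F}_{2}\times\mathbb{F}_{2}\times\mathbb{F}_{2}$, is excluded by using stability beyond its quadratic consequence: in the three-branch model $\mathbb{F}_{2}[x_{1},x_{2},x_{3}]/(x_{1}x_{2},x_{1}x_{3},x_{2}x_{3})$ localized at the origin, the regular ideal $I=(x_{1}-x_{2},\,x_{2}-x_{3})$ has $(I:I)$ equal to the whole ring yet $I\,(R:I)=M\overline{R}\subsetneq R$, so $I$ is not invertible and hence not projective over its endomorphism ring; the abstract three-branch case is handled the same way, contradicting stability.

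The main obstacle is precisely this converse: reducing an abstract reduced stable ring with finite normalization to the gluing picture, ruling out the exceptional $\mathbb{F}_{2}\times\mathbb{F}_{2}\times\mathbb{F}_{2}$ case, and executing the Greither step that upgrades ``$\overline{R}$ is two-generated over $R$'' to ``every ideal of $R$ is two-generated''. For these I would ultimately rely on the proofs in \cite{DK} and \cite{SVBull}; by contrast, the forward implication and the parenthetical equivalence are short given Propositions~\ref{2.2}--\ref{2.3}.
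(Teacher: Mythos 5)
The paper offers no proof of Theorem~\ref{Bass thm} at all: it is stated as a summary of results from \cite{Bass}, \cite{DK}, \cite{Rush2gen} and \cite{SVBull}, and the later machinery of the paper is built on top of it. Your sketch is therefore an elaboration rather than a reconstruction, and most of it is sound: the treatment of the parenthetical equivalence is correct, the forward direction via Sally's argument (Proposition~\ref{Sally} plus Lemma~\ref{primary stable}(5)) or via \cite{Bass} is fine, and the exclusion of the $\mathbb{F}_2\times\mathbb{F}_2\times\mathbb{F}_2$ fibre is exactly condition (c) of Lemma~\ref{Rush lemma} (equivalently Theorem~\ref{big}(4)), so your explicit three-branch computation, while correct, can be replaced by a citation internal to the paper.

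The one step that would fail as written is the appeal to ``Theorem~\ref{Greither} below.'' Within this paper that reference is circular: the proof of Theorem~\ref{Greither}((3)$\Rightarrow$(1)) invokes Corollary~\ref{reduced cor}, whose proof invokes Theorem~\ref{new Bass}, whose proof invokes Theorem~\ref{Bass thm} itself (``it remains by Theorem~\ref{Bass thm} to show\dots''). You must instead cite Greither's original result \cite[Theorem 2.1]{Gre}, which the paper explicitly notes is proved independently by multiplicity theory, or run the Sally--Vasconcelos argument from \cite{SVBull,SV} directly; your closing sentence deferring to \cite{DK} and \cite{SVBull} rescues the logic, but the parenthetical cross-reference should be deleted. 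Two smaller soft spots: (i) Handelman's type (iii) in Lemma~\ref{quadratic} does not by itself bound $\dim_k\overline{R}/M\overline{R}$ (a square-zero maximal ideal of any dimension gives a quadratic extension), so to conclude that $\overline{R}$ is two-generated in cases (i)--(iv) you need the additional observation that $\overline{R}$ is a semilocal Dedekind ring, whence each local factor of the fibre has principal maximal ideal; (ii) the assertion that a stable ring has Krull dimension at most one is used but not justified --- it is in \cite{SV}, but it is not ``free'' and should be cited.
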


Thus in the case that  $R$ is an analytically unramified  local Cohen-Macaulay ring, $R$ is stable if and only if every ideal of $R$ can be generated by two elements.   The analytically ramified case is more complicated, as illustrated by results throughout the paper.  We single out in the following definition a specific  class of such rings. 

\begin{definition}  
{\em 
 A domain $R$ is a {\it bad stable domain} if $R$ is a one-dimensional  stable local domain without finite normalization.  
}
\end{definition}

Bad stable domains were studied in \cite{GFF,OTrans}. 
 The appellation ``bad'' here is borrowed from the title of Nagata's appendix in \cite{Na}, ``Examples of bad Noetherian rings,'' and refers to the fact that the domain does not have finite normalization.

 \begin{theorem} \label{GFF theorem} {\em \cite[Proposition 2.1, Theorems 3.4 and 4.2 and Corollaries 3.5 and~4.3]{GFF}}
 The following are equivalent for a local domain $R$ with quotient field $F$. 
 
 \begin{enumerate} 
 
 \item[{\em (1)}] $R$ is a bad stable domain. 
 
 \item[{\em (2)}] $\overline{R}$ is a DVR, $R \subseteq \overline{R}$ is a quadratic extension and $\overline{R}/R$ is a divisible $R$-module.
 
 \item[{\em (3)}] With $\wt{R}$ as in Notation~\ref{completion notation}, there is a nonzero prime ideal $P$ of $\wt{R}$ such that $P^2 = 0$ and $\wt{R}/P$ is a DVR.
 
 \item[{\em (4)}] $\overline{R}$ is a DVR and $\overline{R}/R \cong \bigoplus_{i \in I}F/\overline{R}$ for some index set $I$.
 
 \item[{\em (5)}] $\overline{R}$ is a DVR and $\overline{R}/R$ is a direct sum of divisible Artinian uniserial $R$-modules. 
 
 \end{enumerate}
 Moreover,  $R$ is a bad Noetherian stable domain of multiplicity  $e$ if and only if 
 the cardinality of the set $I$ in (4) is $e-1$. 
 \end{theorem}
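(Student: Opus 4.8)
Throughout, $R$ is a one-dimensional local domain with regular maximal ideal $M$, quotient field $F$, and integral closure $\overline{R} \subseteq F$; fix a nonzerodivisor $m \in M$ and let $\wt{R}$ be the $mR$-adic completion of Notation~\ref{completion notation}. The tool used repeatedly is the classical description of stable regular ideals: a regular ideal $I$ is stable precisely when it is principal as a module over its multiplier ring $(I:I)$, equivalently $I^{2} = xI$ for some $x \in I$; in particular, when $M$ is stable one has $M^{2} = mM$, so $\wt{R} = \widehat{R}$ by Remark~\ref{new remark}. The plan is to run the cycle $(1) \Rightarrow (3) \Rightarrow (2) \Rightarrow (1)$, with the passage to $\wt{R}$ carrying the weight of $(1) \Rightarrow (3)$, and to append $(2) \Leftrightarrow (4) \Leftrightarrow (5)$ as a module-theoretic supplement; the ``moreover'' is a multiplicity count in the Noetherian case.

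\emph{$(1) \Rightarrow (3)$.} Suppose $R$ is a bad stable domain. Since $R$ is stable, Rush's theorem recalled in \S2.3 (with Definition~\ref{quad def}) shows $R \subseteq \overline{R}$ is quadratic, and Proposition~\ref{at most 3} then makes $\overline{R}$ semilocal; together with the classical fact that the integral closure of a one-dimensional stable local domain is a Pr\"ufer domain, this makes $\overline{R}$ a semilocal principal ideal domain. Now pass to $\wt{R}$: by Matlis's results (Proposition~\ref{2.6}) regular ideals of $R$ and of $\wt{R}$ correspond, $m$ remains a nonzerodivisor, $R/I \cong \wt{R}/I\wt{R}$ for regular $I$, and $\lambda\colon R \hookrightarrow \wt{R}$ is injective with $R$ the contraction of its image; from this, stability transfers to $\wt{R}$. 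Because $R$ has no finite normalization, $\wt{R}$ fails to be reduced (for Noetherian $R$ this is Proposition~\ref{2.3}(3) with $\wt{R} = \widehat{R}$, and in general it comes out of Matlis's theory together with stability), and one argues that $\wt{R}$ has a unique minimal prime $P = \Nil(\wt{R}) \ne 0$ and that both $\wt{R}/P$ and $\overline{R}$ are DVRs --- the semilocal PID $\overline{R}$ becoming local once one knows its completion $\wt{R}/P$ is a domain. Finally, since $P$ sits inside the regular ideal $P + m\wt{R}$, stability of the latter together with completeness (so that $\bigcap_{n} m^{n}\wt{R} = 0$) forces $P^{2} = 0$, which is $(3)$.

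\emph{$(3) \Rightarrow (2) \Rightarrow (1)$ and the module theory.} Given $(3)$, note $P \cap R = 0$ (it is a nil ideal of the domain $R$), so $R \hookrightarrow \wt{R}/P$, a DVR; its normalization identifies $\overline{R}$ as a DVR and, as in Remark~\ref{quad remark}(3), keeps $R \subseteq \overline{R}$ quadratic. From $P^{2} = 0$ the ideal $P$ is a module over the DVR $\wt{R}/P$, so the realization of $R$ as a pullback inside $\wt{R}$ exhibits $\overline{R}/R$ as a quotient of an $F$-vector space, hence divisible; equivalently, for $0 \ne r \in M$ the $R$-module $r\overline{R} + R$ is a ring by quadraticity, and one shows using stability that it equals $\overline{R}$. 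This is $(2)$. For $(2) \Rightarrow (1)$: divisibility gives $m\overline{R} + R = \overline{R}$, so a nonzero conductor element would force $\overline{R} = R$; hence $(R:\overline{R}) = 0$ and $R \ne \overline{R}$ has no finite normalization, while every nonzero ideal of $R$ reduces to an $M$-primary one, for which quadraticity and divisibility identify the multiplier ring and exhibit the ideal as principal over it, so that $R$ is stable. The equivalences $(2) \Leftrightarrow (4) \Leftrightarrow (5)$ are then pure module theory over the DVR $\overline{R}$: quadraticity makes the $R$-submodules of $\overline{R}/R$ the images of the rings between $R$ and $\overline{R}$, which, being domains squeezed between $R$ and a DVR, form a chain, so $\overline{R}/R$ is a direct sum of Artinian uniserial $R$-modules; divisibility passes to summands, and over a DVR a divisible Artinian uniserial module is isomorphic to $F/\overline{R}$ (the analogue of a Pr\"ufer group), which gives $(4) \Leftrightarrow (5)$, while $(2)$ is just the assertion that such a decomposition exists.

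\emph{The ``moreover'' and the main obstacle.} Assume in addition $R$ is Noetherian. Then $M^{2} = mM$, so by Proposition~\ref{2.3}(1) the multiplicity $e$ equals the embedding dimension; iterating $M^{n} = m^{n-1}M$ gives $\dim_{R/M}(M^{n}/M^{n+1}) = \dim_{R/M}(M/mM)$ for all $n \ge 1$, hence $e = \dim_{R/M}(M/mM)$. Reducing $R \subseteq \overline{R}$ modulo $m\overline{R}$ and using $\overline{R}/R \cong \bigoplus_{i \in I} F/\overline{R}$ --- where each summand $F/\overline{R}$ contributes exactly a one-dimensional $R/M$-space to $\overline{R}/(R + m\overline{R})$ --- gives $\dim_{R/M}\bigl(\overline{R}/(R + m\overline{R})\bigr) = |I|$, and comparing the two computations yields $e = |I| + 1$. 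I expect the main obstacle to be the core of $(1) \Rightarrow (3)$: showing, with no Noetherian hypothesis available, that $\wt{R}$ is stable and that its nilradical is a nonzero prime with square zero and DVR quotient --- equivalently, extracting from the bare hypothesis ``every regular ideal is projective over its endomorphism ring'' that $\overline{R}$ is a \emph{local} Dedekind domain and $\overline{R}/R$ is divisible --- which is exactly where Matlis's completion theory (Proposition~\ref{2.6}) does the real work.
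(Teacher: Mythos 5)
This theorem is not proved in the paper at all: it is imported verbatim from \cite[Proposition 2.1, Theorems 3.4 and 4.2, Corollaries 3.5 and 4.3]{GFF} and then used as a black box, so the only internal material to measure your argument against is Theorem~\ref{new infinite}, which proves the ring-theoretic analogue of $(1)\Leftrightarrow(3)$. Your architecture (the cycle $(1)\Rightarrow(3)\Rightarrow(2)\Rightarrow(1)$, with Matlis's completion theory carrying $(1)\Rightarrow(3)$) matches that proof in outline, but two of your steps are wrong rather than merely compressed. In $(2)\Rightarrow(5)$ you assert that the rings between $R$ and $\overline{R}$ form a chain and deduce that $\overline{R}/R$ is a direct sum of uniserial modules. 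A chain of intermediate rings would make $\overline{R}/R$ itself uniserial, forcing $|I|=1$; this contradicts the theorem's own ``moreover'' clause, since a bad stable domain of multiplicity $3$ has $\overline{R}/R\cong F/\overline{R}\oplus F/\overline{R}$, which has many incomparable submodules and hence many incomparable intermediate rings. The decomposition into copies of $F/\overline{R}$ is precisely the hard content of \cite[Theorem 3.4, Corollary 3.5]{GFF} and cannot be obtained from a chain condition. Relatedly, your multiplicity count computes $\dim_{R/M}\overline{R}/(R+m\overline{R})$, but divisibility of $\overline{R}/R$ gives $\overline{R}=R+m\overline{R}$, so that dimension is $0$, not $|I|$. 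The correct count uses $M=mE(M)$, so that $M/M^2=M/mM\cong E(M)/M$ and $e=1+\dim_{R/M}(E(M)/R)$, and then identifies $E(M)/R$ with the $M$-socle of $\bigoplus_{i\in I}F/\overline{R}$.

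Second, in $(1)\Rightarrow(3)$ the assertion that $P^2=0$ follows from stability of the regular ideal $P+m\wt{R}$ together with completeness is not an argument: stability of that ideal only says its square is a principal multiple of it over its endomorphism ring and gives no control over $P^2$. The proof of Theorem~\ref{new infinite} (Claim 3) instead represents elements of $P$ as coherent sequences $\langle r_i+m^iR\rangle$ with $r_i\in m^iS\cap R$ and invokes the quadratic property in the sharpened form $(m^iS\cap R)^2\subseteq m^iR$; some such computation is unavoidable. You should also justify in $(3)\Rightarrow(2)$ why $\lambda^{-1}(P)=0$ (this needs $\bigcap_i m^iR=0$, which must be extracted from (3) rather than assumed, since an element $r$ with $\lambda(r)\in P$ only satisfies $r^2\in\bigcap_i m^iR$), and why the DVR $\wt{R}/P$ pulls back along $\wt{R}=\lambda(R)+m\wt{R}$ to show $\overline{R}=F\cap(\wt{R}/P)$ is a DVR; as written these are asserted, not proved.
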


Theorem~\ref{GFF theorem}(3) and a theorem of Lech's can  be used to guarantee the existence of  bad stable domains with prescribed completions:  

\begin{corollary} \label{Lech cor} 
The following are equivalent for  a  Noetherian local ring  $(R,M)$ that is complete in the $M$-adic topology. 
\begin{itemize}
\item[{\em (1)}] 
The ring  $R$ is the completion of a bad Noetherian stable domain.

\item[{\em (2)}] 
 No nonzero integer of $R$ is a zerodivisor and there is a  prime ideal $P$ of $R$ such that $P^2 =0$ and $R/P$ is a DVR.
 
 \end{itemize}  
\end{corollary}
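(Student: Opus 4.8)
The plan is to obtain both implications from Theorem~\ref{GFF theorem}(3). If $D$ is a Noetherian local domain its maximal ideal is finitely generated, so by Remark~\ref{new remark} the completion $\wt D$ of Notation~\ref{completion notation} coincides with the $M$-adic completion $\widehat D$; thus Theorem~\ref{GFF theorem}(3) amounts to the assertion that $D$ is a bad stable domain exactly when $\widehat D$ has a nonzero prime $P$ with $P^2=0$ and $\widehat D/P$ a DVR. It then remains to translate between ``$R$ is the completion of such a $D$'' and the intrinsic conditions in~(2), and the tool for the nontrivial direction is Lech's theorem describing which complete Noetherian local rings arise as completions of Noetherian local domains.

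For $(1)\Rightarrow(2)$, take a bad Noetherian stable domain $D$ with $\widehat D\cong R$. Then $\wt D=\widehat D\cong R$, and Theorem~\ref{GFF theorem}(3) at once furnishes a nonzero prime $P$ of $R$ with $P^2=0$ and $R/P$ a DVR. To see that no nonzero integer $n$ of $R$ is a zerodivisor, observe that $n$ is the image of a nonzero integer of the domain $D$, which generates a regular ideal of $D$; by Proposition~\ref{2.6}(1) its extension $nR$ is a regular ideal of $R$, and since a principal ideal is regular precisely when its generator is a nonzerodivisor, $n$ is a nonzerodivisor in $R$.

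For $(2)\Rightarrow(1)$, note first that the prime $P$ of~(2)---which, as in Theorem~\ref{GFF theorem}(3), we take to be nonzero---equals $\Nil R$: indeed $P^2=0$ forces $P\subseteq\Nil R$, and $R/P$ being a domain forces $\Nil R\subseteq P$. Hence $\dim R=\dim(R/P)=1$. I would then invoke Lech's theorem: a complete Noetherian local ring is the $M$-adic completion of a Noetherian local domain provided no nonzero integer of it is a zerodivisor and its maximal ideal is not an associated prime. Granting both hypotheses, Lech's theorem yields a Noetherian local domain $D$ with $\widehat D\cong R$; then $\dim D=1$, and $\wt D=\widehat D\cong R$ carries the nonzero prime $P$ with $P^2=0$ and $R/P$ a DVR, so $D$ is a bad stable domain by Theorem~\ref{GFF theorem}(3). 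Being Noetherian, $D$ is a bad Noetherian stable domain with completion $R$, which is~(1). (Alternatively, $R=\widehat D$ has nonzero nilradical, so $D$ is analytically ramified and lacks finite normalization by Proposition~\ref{2.4}(3).)

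The step I expect to require the most care is the application of Lech's theorem in $(2)\Rightarrow(1)$. The hypothesis on integers is exactly the one we are handed; one must also check that the maximal ideal $M$ of $R$ is not an associated prime of $R$ (equivalently, since $\dim R=1$, that $R$ is Cohen-Macaulay). As $\Nil R=P$ is the unique minimal prime of $R$ and $\dim R=1$, the only candidate for an embedded associated prime is $M$ itself, and in mixed characteristic one excludes it using~(2): if $M$ were an associated prime then the residue characteristic would be a nonzero integer lying in $M$, hence a zerodivisor. In the equicharacteristic case this depth condition must instead be derived from the hypothesis that $R/P$ is a DVR (or supplied as part of the statement of~(2)); identifying and settling this point is the crux, after which only routine bookkeeping with $\wt R=\widehat R$ and Theorem~\ref{GFF theorem}(3) remains.
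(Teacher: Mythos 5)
Your route is the paper's route: (1) $\Rightarrow$ (2) straight from Theorem~\ref{GFF theorem}(3) (your extra check that nonzero integers remain nonzerodivisors in the completion is fine, and more than the paper bothers to write down), and (2) $\Rightarrow$ (1) via Lech's theorem followed by Theorem~\ref{GFF theorem}(3). The point you single out as the crux --- that Lech's theorem also requires $M \notin \mathrm{Ass}(R)$, and that this is not visibly a consequence of (2) outside mixed characteristic --- is exactly where the paper's proof is thinnest: it simply lists ``$M$ is not an associated prime of $R$'' among the facts being fed to Lech's theorem, without deriving it from (2). So you have not overlooked an argument that the paper supplies; there is none there to find.

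Moreover your worry is justified: condition (2) as literally stated does not force $\mathrm{depth}\,R \geq 1$. Take $V = k[[t]]$ and $R = V \star (V/tV)$, the idealization of the torsion module $V/tV$. Then $R$ is a complete Noetherian local ring of dimension one, $P = 0 \star (V/tV)$ is a nonzero prime with $P^2 = 0$ and $R/P \cong V$ a DVR, and every nonzero integer of $R$ is a unit, hence a nonzerodivisor. Yet $\mathrm{Ann}_R\bigl((0,\bar{1})\bigr) = M$, so $M \in \mathrm{Ass}(R)$ and $R$ has depth $0$; since a one-dimensional Noetherian local domain is Cohen--Macaulay and completion preserves depth, this $R$ is not the completion of any Noetherian local domain, let alone a bad stable one. (Remark~\ref{idealization remark} quietly takes $L$ to be a \emph{free} $V$-module precisely to avoid this.) So the implication (2) $\Rightarrow$ (1) needs the additional hypothesis that $M$ is not an associated prime of $R$ (equivalently, $\mathrm{depth}\,R \geq 1$); your observation that this is automatic in mixed characteristic is correct, but in the equicharacteristic case it must be added to (2) rather than derived from it. With that hypothesis in hand --- and with $P$ taken nonzero, as you correctly insist --- your argument, like the paper's, goes through.
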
 

\begin{proof}  
That (1) implies (2) follows from Theorem~\ref{GFF theorem}(3).  Conversely, assume (2). 
By a theorem of Lech \cite[Theorem 1]{Lech}, the fact that $M \ne 0$, $M$ is not an associated prime of $R$, and no nonzero integer of $R$ is a zerodivisor implies that the ring $R$ is  the completion of a  Noetherian local domain $A$. By Theorem~\ref{GFF theorem}(3), $A$ is a bad  Noetherian stable domain. 
\end{proof} 

\begin{remark}  \label{idealization remark}
{\em  
The archetypal example of  a ring $R$ such as in Corollary~\ref{Lech cor} is produced by Nagata idealization: Let $V$ be a complete DVR, and let $L$ be a nonzero finitely generated free $V$-module. Define $V *L$ as a $V$-module to be $V \oplus L$, and view $V*L$ as a ring with  multiplication given by $(v_1,\ell_1)(v_2,\ell_2) = (v_1v_2,v_1\ell_2 + v_2 \ell_1)$ for all $v_1,v_2 \in V$ and $\ell_1,\ell_2 \in L$. With $P = 0 * L$, the ring $R= V * L$  is a complete local ring 
meeting the hypotheses of Corollary~\ref{Lech cor}, and hence $R$ is the completion of a bad Noetherian stable domain $A$. Moreover, since the multiplicity of $A$ is the same as that of its completion $R$, 
the multiplicity of $A$ is $e=1 + \rank \: L$.  
}\end{remark}
 
 
 To further motivate the class of bad stable domains, we mention three other contexts in which these rings arise. 
 An extension $U \subseteq V$ of DVRs is {\it immediate} if $U$ and $V$ have the same residue field and the maximal ideal of $U$ extends to the maximal ideal of $V$. The  ${\bf m}$-adic completion of a DVR is an immediate extension. 

\begin{theorem} \label{example list} {\em (\cite[Corollary 5.7 and Theorem 5.9]{GFF} and \cite[Theorem 4.1]{OTrans})}

\begin{itemize}

\item[{\em (1)}]  If $k$ is a field and $A$ is an affine $k$-domain with Krull dimension $d>1$ and quotient field $F$, then there exists a bad Noetherian stable domain $R$ between $A$ and $F$ having multiplicity $d$ and residue field finite over $k$. 

\item[{\em (2)}]  If $(A,M)$ is a  Noetherian local domain and  there exists a DVR  $V$ of $A$ that birationally dominates $A$ and satisfies $V = A + MV$, then the kernel of the exterior differential $d_{V/A}:V \rightarrow \Omega_{V/A}$ is a bad stable domain between $A$ and $V$.  

\item[{\em (3)}] If $U \subseteq V$ is an immediate extension of DVRs having quotient fields $Q$ and $F$, respectively, then  there is a one-to-one correspondence between bad  stable domains $R$ containing $U$ and having normalization $V$ and proper full $V$-submodules of the module  $\Omega_{F/Q}$ of K\"ahler differentials. 

\end{itemize}

\end{theorem}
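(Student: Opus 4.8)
The three parts are separate constructions, but the plan in each case follows one template: exhibit the candidate ring $R$ explicitly and verify one of the equivalent conditions of Theorem~\ref{GFF theorem}. The most convenient are condition (2) --- that $\overline{R}$ is a DVR, $R\subseteq\overline{R}$ is a quadratic extension, and $\overline{R}/R$ is a divisible $R$-module --- together with its refinements (4) and (5), since the cardinality of the index set $I$ in (4) also controls the multiplicity.

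For part (2), put $R=\ker d_{V/A}$. Since $d_{V/A}$ is an $A$-linear derivation, $R$ is a subring of $V$ with $A\subseteq R\subseteq V$, and in fact $d_{V/A}$ is $R$-linear (as $d_{V/A}(r)=0$ for $r\in R$), so it induces an $R$-module embedding $V/R\hookrightarrow\Omega_{V/A}$. Because $V$ birationally dominates $A$, the rings $R$ and $V$ have the common quotient field $F$; as $V$ is integrally closed in $F$, $\overline{R}\subseteq V$, and the reverse inclusion $\overline{R}=V$ --- indeed quadraticity of $R\subseteq V$ --- is part of the module-theoretic step flagged below. Divisibility of $V/R$ I would get directly: iterating $V=A+MV$ and using $MA=M$ gives $V=A+M^nV$ for every $n$; for a nonzero $r\in R$ one has $rV=\mathfrak n_V^{\,v(r)}$ (with $v$ the valuation of $V$) and $M^nV=\mathfrak n_V^{\,cn}$ where $c=\min\{v(x):0\ne x\in M\}\ge 1$, so choosing $cn\ge v(r)$ yields $V=A+M^nV\subseteq R+rV$. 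With $\overline{R}=V$ a DVR, $R\subseteq V$ quadratic, and $V/R$ divisible established, Theorem~\ref{GFF theorem}, (1)$\Leftrightarrow$(2), identifies $R$ as a bad stable domain.

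Part (1) reduces to the situation of part (2). Localizing $A$ at a maximal ideal $\mathfrak m$ gives a Noetherian local domain $A_{\mathfrak m}$ of Krull dimension $d>1$ with quotient field $F$ and, by the Nullstellensatz, residue field finite over $k$. One then produces a DVR $V$ of $F$ birationally dominating $A_{\mathfrak m}$ with $V=A_{\mathfrak m}+\mathfrak m A_{\mathfrak m}V$, arranged --- this is where $d>1$ is used, so that $\Omega_{V/A_{\mathfrak m}}$ is large enough --- so that $R=\ker d_{V/A_{\mathfrak m}}$ (or the kernel of the composite of $V\to\Omega_{V/A_{\mathfrak m}}$ with the projection onto a suitable divisible $V$-quotient) is a bad Noetherian stable domain with $A\subseteq A_{\mathfrak m}\subseteq R\subseteq F$ and $\overline{R}/R\cong (F/V)^{\,d-1}$. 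Since $d-1\ge 1$ we have $R\ne\overline{R}$, and the last clause of Theorem~\ref{GFF theorem} gives multiplicity $(d-1)+1=d$.

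Part (3) refines the same ideas in the birationally immediate case. To a bad stable domain $R$ with $U\subseteq R$ and $\overline{R}=V$ one assigns the $V$-submodule $N_R\subseteq\Omega_{F/Q}$ generated by $d_{F/Q}(R)$; conversely, a proper full $V$-submodule $N\subsetneq\Omega_{F/Q}$ gives the ring $R_N=\ker\!\bigl(V\xrightarrow{\,d_{F/Q}\,}\Omega_{F/Q}\to\Omega_{F/Q}/N\bigr)$, which is a subring of $V$ containing $U$ because the composite is a $U$-derivation into the $V$-module $\Omega_{F/Q}/N$. Using the quadratic/divisible description of Theorem~\ref{GFF theorem}(2) and the uniserial decomposition in (5), one checks that $R\mapsto N_R$ and $N\mapsto R_N$ are mutually inverse, that fullness of $N$ corresponds to $\overline{R_N}=V$, and that properness corresponds to $R_N\ne V$. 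The main obstacle throughout is the module-theoretic bookkeeping with $\Omega$: showing the constructed $R$ is genuinely quadratic in $V$ (not merely integral), that $V/R$ is a direct sum of copies of $F/V$ of exactly the prescribed size rather than an arbitrary divisible module, and --- for (3) --- that the two assignments are inverse bijections respecting the normalization. These verifications are precisely the content of \cite{GFF} and \cite{OTrans}.
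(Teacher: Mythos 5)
The paper offers no proof of this theorem: it is imported verbatim from \cite[Corollary 5.7, Theorem 5.9]{GFF} and \cite[Theorem 4.1]{OTrans}, and the remark immediately following it indicates that the ring in part (1) is actually constructed as $R = F \cap (\widehat{B}/P^{(2)})$ for a suitable excellent local $k$-algebra $B$ --- a completion/symbolic-power construction, not the reduction of (1) to (2) that you propose. Within part (2), your divisibility computation is correct and is genuinely a piece of the argument: iterating $V = A + MV$ gives $V = A + M^nV$, and $M^nV = \mathfrak{n}_V^{cn} \subseteq rV$ for $cn \ge v(r)$, so $V = R + rV$. But the two assertions you flag and postpone --- that $R = \ker d_{V/A} \subseteq V$ is a \emph{quadratic} extension and that $\overline{R} = V$ --- are exactly the content of \cite[Theorem 4.1]{OTrans}. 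Neither follows from divisibility or from the Leibniz rule alone (outside characteristic $2$, where $d(v^2)=0$ makes integrality of $V$ over $\ker d$ immediate, there is no one-line argument), so part (2) as written restates the theorem rather than proving it.

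The gap widens in parts (1) and (3). For (1) you must first \emph{produce} a DVR $V$ birationally dominating $A_{\mathfrak m}$ with $V = A_{\mathfrak m} + \mathfrak m V$ --- a nontrivial existence statement --- and, more importantly, part (2) only yields a bad stable domain with no control over Noetherianity or over the number of $F/V$-summands of $V/R$. To conclude multiplicity $d$ via the last clause of Theorem~\ref{GFF theorem} you need $V/R \cong (F/V)^{d-1}$ \emph{and} $R$ Noetherian; the kernel of a derivation is not Noetherian in general, and this is precisely where \cite{GFF} invokes excellence of $B$. Your sentence ``arranged so that $R$ is a bad Noetherian stable domain with $\overline{R}/R \cong (F/V)^{d-1}$'' assumes the conclusion. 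For (3), the two assignments $R \mapsto N_R$ and $N \mapsto R_N$ are plausibly the right ones, but the claims that they are mutually inverse, that fullness corresponds to $\overline{R_N} = V$, and that $N_{R_N} = N$ (which requires showing $N$ is generated over $V$ by $d(V) \cap N$) are all deferred. In sum, the proposal is a sensible table of contents for the cited proofs, but in each part the step it leans on is the statement to be proved.
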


\begin{remarks} {\em 
(1) 
 The   ring $R$ in Theorem~\ref{example list}(1) is constructed  along the following lines: For an appropriate local $A$-algebra $B$ contained in $F$ such that $B$ is  essentially of finite type over $k$, teh ring $R$ is defined to be $R = F \cap (\widehat{B}/P^{(2)})$, where $\widehat{B}$ is the ${\bf m}$-adic completion of $B$, $P$ is a certain prime ideal of $\widehat{B}$ and $P^{(2)}$ is the second symbolic power of $P$. That $R$ has multiplicity $d$   depends on the fact that $B$ is an excellent local ring \cite[Corollary 5.6]{GFF}.  
 
(2) 
Every bad Noetherian stable domain of positive characteristic must arise as in Theorem~\ref{example list}(2)   from a derivation \cite[Theorem 6.4]{OTrans}.

(3) 
 The one-to-one correspondence in Theorem~\ref{example list}(3)  
is used in \cite[Theorem 7.6]{OTrans}  to describe complete 
 DVRs that are the normalization of an analytically ramified  Noetherian local domain. 
For example, every equicharacteristic  complete DVR is the normalization of both Noetherian and non-Noetherian bad stable domains. }
\end{remarks}

%

\section{Quadratic rings}


In this section we introduce the notion of a quadratic ring and show that these rings exemplify  some key technical features of stable local rings.
 We require  first some preliminaries on stable ideals. 
  
  \begin{definitions} \label{easy def} {\em Let $R$ be a ring, and let $Q(R)$ be its total ring of quotients. 
  
  \begin{itemize}
  \item[{(1)}] 
An $R$-submodule $I$ of $\QQ(R)$ such that
$bI \subseteq R$ for some nonzerodivisor $b \in R$ is called a {\it fractional ideal} of $R$.  The fractional ideal $I$ is {\it regular} if it contains a nonzerodivisor. 

\item[(2)] 
 If  $I$ is a
{regular} fractional ideal of the ring $R$, then $I$ is  {\it invertible}  if
$I$ is a finitely generated fractional ideal such that $IR_M$ is a
principal fractional ideal of $R_M$ for each maximal ideal $M$ of
$R$. 

\item[(3)] The ring $R$ is {\it finitely stable}\footnote{The rings we call ``finitely stable'' were called ``stable'' by Rush in   \cite{Rush2gen,RushRep}.} if every finitely generated regular ideal of $R$ is stable. 
\end{itemize}}
\end{definitions}

\begin{remark} \label{inv remark} {\em Let $I$ be a regular fractional ideal of a ring $R$. Then  $I$ is {invertible} if and only if $IJ = R$ for some fractional ideal $J$ \cite[Proposition 2.3, p.~97]{KZ}.  If $R$ has only finitely many maximal ideals, then an invertible ideal is principal \cite[Exercise 2.11B, p.~33.]{Lam}.}
\end{remark}

\begin{notation} {\em  Let $I$ be a regular fractional ideal of a ring  $R$. Then, by \cite[(2.8)]{OFS}, 
  End$(I) = \{q \in \QQ(R):qI \subseteq I\}$. To simplify notation we write $E(I) = $ End$(I)$.}  
 \end{notation}

  We collect in the next lemma some basic properties of a regular ideal $I$ and its endomorphism ring $E(I)$.  
  
\begin{lemma}  With the  terminology of Definition~\ref{easy def}, 
 let  $I$ be a regular fractional ideal of a ring $R$. Then
 \label{primary stable}
 \begin{enumerate}
 
  \item[{\em (1)}]   $I$ is a regular fractional ideal of $\End(I)$.

 \item[{\em (2)}] 
 $I \subseteq {\rm Jac}(\End(\Jac R)))$.

 \item[{\em (3)}]  $I$ 
 is stable if and only if $I$ is an invertible fractional ideal of $E(I)$.  
 
 \item[{\em (4)}] 
 If $I^2 = AI$ for some invertible fractional ideal $A$ of $R$ with $A
\subseteq I$, then $I = A\End(I)$ and $I$ is stable.

\item[{\em (5)}]  If there exists $x \in I$ such that $I^2 = xI$, then $I$ is stable.  
 
 \item[{\em (6)}] Let  $R$ be 
finitely stable and local. Then $I$ is  stable  if
and only if $I^2 = xI$ for some $x \in I$.

 
 \end{enumerate}
\end{lemma}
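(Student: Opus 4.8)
The plan is to dispatch (1)--(5) by formal manipulation with the endomorphism ring $\End(I)=\{q\in\QQ(R):qI\subseteq I\}$ and the notion of invertibility, and to derive (6) from (3) together with the structure of quadratic extensions. Parts (1) and (2) are essentially definitional. For (1): $I$ is an $\End(I)$-submodule of $\QQ(R)$; it contains a nonzerodivisor of $R$, which remains a nonzerodivisor in $\End(I)\subseteq\QQ(R)$; and if $b\in R$ is a nonzerodivisor with $bI\subseteq R\subseteq\End(I)$, then $I$ is a fractional ideal of $\End(I)$. For (2) (using $I\subseteq\Jac R$, the case relevant for a proper regular ideal of the local ring $R$): if $y\in I$ and $s\in\End(\Jac R)$, then $sy\in s\,\Jac R\subseteq\Jac R$, so $1-sy$ is a unit of $R$, hence of $\End(\Jac R)$; thus every element of $I$ lies in $\Jac(\End(\Jac R))$.

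For (3): by (1), $I$ is a regular fractional ideal of $S:=\End(I)$, and a regular fractional ideal of any ring $S$ is projective over $S$ if and only if it is invertible over $S$. For the forward direction one uses the standard dual-basis argument: a dual basis $\{x_i,f_i\}$ for the projective $S$-module $I$ has each $f_i$ acting as multiplication by some $q_i\in\QQ(R)$ with $q_iI\subseteq S$ and only finitely many $q_i$ nonzero, and evaluating the dual-basis relation at a nonzerodivisor of $I$ yields $\sum q_ix_i=1$, so $I\bigl(\sum Sq_i\bigr)=S$; the converse is Remark~\ref{inv remark}. Since projectivity over $\End(I)$ is the definition of stability, (3) follows. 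For (4): choose $A^{-1}$ with $AA^{-1}=R$; multiplying $I^2=AI$ by $A^{-1}$ gives $(A^{-1}I)I=I$, so $A^{-1}I\subseteq(I:I)=\End(I)$, i.e.\ $I\subseteq A\End(I)$, while $A\subseteq I$ gives $A\End(I)\subseteq I\End(I)=I$; hence $I=A\End(I)$, and then $(A\End(I))(A^{-1}\End(I))=\End(I)$ shows $I$ is invertible over $\End(I)$, so (3) gives stability. Part (5) is (4) with $A=xR$: since $I$ is regular, $xI=I^2$ is regular, so $x$ is a nonzerodivisor, $xR$ is invertible, and $xR\subseteq I$.

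For (6), the implication from $I^2=xI$ to stability is (5), so assume $I$ is stable. By (3), $I$ is invertible, hence finitely generated, over $\End(I)$; choose $a_1,\dots,a_n\in I$ generating $I$ over $\End(I)$, with $a_1$ a nonzerodivisor (adjoin one if necessary), and set $I_0=a_1R+\dots+a_nR$, a finitely generated regular ideal with $I_0\End(I)=I$. If $qI_0\subseteq I_0$ then $qI=qI_0\End(I)\subseteq I_0\End(I)=I$, so $\End(I_0)\subseteq\End(I)$. Since $R$ is finitely stable, $I_0$ is stable, hence invertible over $\End(I_0)$; and since $I_0$ is finitely generated, $\End(I_0)$ is integral over $R$ (Cayley--Hamilton), so $\End(I_0)\subseteq\overline{R}$ and every maximal ideal of $\End(I_0)$ contracts to $M$. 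By Rush's theorem \cite[Proposition~2.1]{RushRep} that a finitely stable ring has $R\subseteq\overline{R}$ quadratic, together with Remark~\ref{quad remark}(3), the extension $R\subseteq\End(I_0)$ is quadratic, so Proposition~\ref{at most 3} bounds the number of primes of $\End(I_0)$ over $M$ by three and $\End(I_0)$ is semilocal. By Remark~\ref{inv remark}, the invertible fractional ideal $I_0$ of $\End(I_0)$ is principal, say $I_0=x\End(I_0)$ with $x=x\cdot 1\in I_0$. Then $I=I_0\End(I)=x\End(I_0)\End(I)=x\End(I)$, so $I^2=x^2\End(I)=x(x\End(I))=xI$ with $x\in I_0\subseteq I$.

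The substance is concentrated in (6): the delicate step is upgrading ``$I$ invertible over its endomorphism ring'' to ``$I$ principal over its endomorphism ring'', which forces one to prove that the relevant endomorphism ring is semilocal. Absent a Noetherian hypothesis, the only route I see is through the quadraticity of $R\subseteq\overline{R}$ for finitely stable $R$ and the prime-counting bound of Proposition~\ref{at most 3}; correspondingly, the points needing care are the reduction of a general stable $I$ to a finitely generated $I_0$ (so that $\End(I_0)$ is integral over $R$) and the inclusion $\End(I_0)\subseteq\End(I)$ used to transport the conclusion back to $I$.
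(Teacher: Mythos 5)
Your proposal is correct and, where the paper argues directly (parts (1), (2), (5)), follows the same lines; for the parts the paper outsources to citations --- (3) and (4) to \cite{OFS} and (6) to \cite[Corollary 5.7]{OFS} --- you supply the standard complete arguments (dual-basis for projective $\Rightarrow$ invertible, multiplication by $A^{-1}$, and semilocality of the endomorphism ring via quadraticity of $R\subseteq\overline{R}$ plus Proposition~\ref{at most 3} and Remark~\ref{inv remark}), which is exactly the route the cited results take. The one small imprecision in (6) is that $I_0=a_1R+\cdots+a_nR$ is only a finitely generated regular \emph{fractional} ideal, so before invoking finite stability you should scale by a nonzerodivisor $b$ with $bI_0\subseteq R$ and use $bI_0\cong I_0$ with $\End(bI_0)=\End(I_0)$ --- the same reduction the paper performs in the proof of Theorem~\ref{pre-connection}; you also correctly observe that (2) as literally stated requires $I\subseteq\Jac R$, which is the only case the paper's own proof covers and the only case it uses.
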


\begin{proof}
(1)   This follows from the fact that every nonzerodivisor in $R$ is also a nonzerodivisor in $\Q(R)$.

(2)  
Let $x \in \Jac R$ and $w \in E(\Jac R)$. Then $wx \in \Jac R$, and so $1-wx$ is a unit of $R$, hence also  of $E(\Jac R)$. Since $w$ was arbitrary in $E(\Jac R)$, we have $x \in \Jac E(\Jac R)$.  


(3) This is proved in \cite[Proposition 2.11]{OFS}.
 
(4)  This is proved in \cite[Proposition 2.13]{OFS}.

(5) Since $I$ is regular and $I^2 \subseteq xR$, the ideal $xR$ is regular, hence invertible, and so 
this follows from (4). 

(6) Under the assumptions of (6), it is proved in \cite[Corollary 5.7]{OFS} that  a regular ideal $J$ of $R$ is stable if and only if $J$ is a principal ideal of $E(I)$. The characterization in (6) now follows from (5). 
\end{proof}






\begin{notation} \label{new notation}{\em 
  Let $R$ be a ring
such that $\Jac R$ is a regular ideal. We associate to $R$ a tower
of rings in $\Q(R)$ by defining
\begin{center} $R_0 := R, \: \: \: \: \: \: \: R_i  := \End(\Jac R_{i-1})$ for every $i \geq
1,  \: \: \: \: \: \: \: R_\infty := \bigcup_{i =1}^\infty R_i$.
\end{center}}
\end{notation}
Lemma~\ref{primary stable}(1) implies that for each $i$, $\Jac R_i$ is a
regular ideal of $R_{i+1} = \End(\Jac R_i)$. In Lipman's terminology \cite[Section 2]{Lipman}, if $R$ is a local Cohen-Macaulay ring and  the Jacobson radical of each $R_i$ is stable, then the ring $R_{i+1}$ is the blow-up of $R_i$ at its Jacobson radical and  the localizations of the rings $R_i$ at maximal ideals are the local rings ``infinitely near'' $R$.

\begin{definition} \label{quadratic ring def} {\em Let $R$ be a ring such that $\Jac R$ is a regular ideal. With Notation~\ref{new notation} we define  $R$ to be a {\it quadratic ring} if \begin{itemize} \item[(1)] $R$ is local, \item[(2)]  $R
\subseteq R_\infty$ is a quadratic extension, 
 and \item[(3)]  the maximal ideal
of $R$ is  stable and regular. 
\end{itemize}}
\end{definition}

As we show in Theorem~\ref{pre-connection}, the relevance of this notion is that  a finitely stable local ring with stable regular maximal ideal is a quadratic ring. 
Some technical properties of quadratic rings are elaborated in Lemmas~\ref{quadratic R1} and \ref{step one} and Proposition~\ref{properties}. 
Parts of  these results were proved for  finitely stable local domains with stable maximal ideal in \cite[Section 4]{OlbStructure}, but the arguments given here are somewhat different and more general in that they  permit zero divisors.

\begin{lemma} \label{quadratic R1}
 Let $R$ be a local ring with regular maximal ideal $M$ and residue field $F = R/M$. Let $R_1$ be  as in Notation~\ref{new notation}.
 Suppose that $R \subseteq R_1$ is a quadratic extension with
$R \ne R_1$.   
\begin{enumerate}

\item[{\em (1)}]  If $R_1$ is a local ring such that $\Jac R_1 = M$, then $R_1 = R_2$ and $R_1/M$ is a field extension of $F$ of degree $2$.

\item[{\em (2)}] If $R_1$ is a local ring such that $\Jac R_1 \ne M$, 
then $R_1 = R + M_1$ and $R_1/M_1 \cong F$. 

\item[{\em (3)}]
If  $R_1$ is not a
 local ring, then $\Jac R_1=M$, $R_1 = R_2$ and $R_1/M$
is isomorphic as an $F$-algebra to either
\begin{itemize} 
\item[{\em (i)}]  $F \times F$, or 
\item[{\em (ii)}] $F \times F \times F$, in which case $F = {\mathbb{F}}_2$.
\end{itemize}

\item[{\em (4)}]  $(\Jac R_1)^2 \subseteq M$.
\end{enumerate}

\end{lemma}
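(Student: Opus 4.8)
**Proof plan for Lemma 3.6 (the four-part statement about $R_1$ under the hypothesis that $R \subseteq R_1$ is quadratic and $R \neq R_1$).**

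The basic idea is to study the finite-dimensional $F$-algebra $R_1/MR_1$ (note $MR_1$ is an ideal of $R_1$ lying between $M$ and $R_1$, hence is an ideal of $R$ as well, so $F = R/M \hookrightarrow R_1/MR_1$ makes sense), and to apply Handelman's classification (Lemma~\ref{quadratic}) to it. First I would check that $R_1/MR_1$ is indeed a quadratic $F$-algebra: since $R \subseteq R_1$ is quadratic, Remark~\ref{quad remark}(3) gives that $R/M \subseteq R_1/MR_1$ is quadratic once we know $M R_1$ is an ideal of $R$ contained in $R_1$ with $M \subseteq MR_1$; and it is finite-dimensional over $F$ because every element of $R_1$ satisfies a monic quadratic over $R$ (Remark~\ref{quad remark}(2)), so $R_1$ is generated over $R$ by elements whose images in $R_1/MR_1$ are each killed by a degree-$\leq 2$ polynomial over $F$ — more carefully, I would first argue $R_1$ is a finitely generated $R$-module (here I expect to use that $M$ is stable, hence $E(M) = R_1$ is a finitely generated $R$-module by Lemma~\ref{primary stable}(6): $M^2 = xM$ gives $M R_1 = xR_1$, so $R_1/MR_1 = R_1/xR_1$ and $R_1 \cong M \cong$ a fractional ideal with $M^2 = xM$, which is module-finite over $R$). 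Then $R_1/MR_1$ is one of the five algebras (i)–(v) in Handelman's list.

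Next I would go through the cases, matching them to the four conclusions. If $R_1$ is local, then $R_1/MR_1$ is local, so it is of type (i), (ii), or (iii) in Handelman's list. Type (i), $R_1/MR_1 = F$, together with $MR_1$ being the (unique) maximal ideal of $R_1$ pulled back — actually here $MR_1 = \Jac R_1 = M_1$, and one checks $\Jac R_1 \ne M$ forces this branch; this is part (2), and $R_1 = R + M_1$ follows from $R_1/M_1 \cong F = R/M$ plus $M \subseteq M_1$. Type (ii) gives $[R_1/MR_1 : F] = 2$, and if additionally $\Jac R_1 = M$ then $MR_1 = M$ so $R_1/M$ is a quadratic field extension of $F$; that $R_1 = R_2$ should follow because $\Jac R_1 = M$ is already stable and one re-runs the construction or uses minimality — I would argue $\Jac R_1 = M$ already being regular and stable, applying $E(-)$ again to $M$ cannot enlarge past $R_1$ since $E(M) = R_1$ by definition; this is part (1). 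Type (iii) — a local ring with square-zero maximal ideal and residue field $F$ — I would need to rule out or absorb: if $MR_1 \subsetneq M_1$ with $M_1^2 \subseteq MR_1$, this is compatible with part (2) again provided the residue field is exactly $F$, which type (iii) guarantees; I'd need to reconcile type (iii) with the conclusion, possibly showing $MR_1 = M_1$ is impossible here or that type (iii) still yields $R_1 = R + M_1$ and $R_1/M_1 \cong F$. If $R_1$ is not local, then $R_1/MR_1$ is a nonlocal quadratic $F$-algebra, so by Handelman it is $F \times F$ or $F \times F \times F$ (the latter forcing $F = \mathbb{F}_2$); one then shows $MR_1$ is the Jacobson radical and in fact equals $M$ (since $MR_1$ is a radical ideal of $R_1$ contained in $M$, and $M$ is maximal in $R$...), giving $\Jac R_1 = M$; $R_1 = R_2$ again from $M = \Jac R_1$ being the conductor-type ideal. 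Part (4) is then immediate: in every case $\Jac R_1$ has image in $R_1/MR_1$ equal to the (nil)radical, which is square-zero in types (i)–(v) except that for the field cases it's zero — so $(\Jac R_1)^2 \subseteq MR_1 \subseteq M$; this needs the easy observation that the Jacobson radical of any of these five algebras squares to zero.

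The main obstacle, I expect, is the bookkeeping that ties the intrinsic ring-theoretic conditions in the hypotheses of parts (1)–(3) ("$R_1$ local with $\Jac R_1 = M$" versus "$\ne M$" versus "$R_1$ not local") to the algebra type of $R_1/MR_1$, and in particular proving the equalities $R_1 = R_2$ in parts (1) and (3). For the latter I would argue: when $\Jac R_1 = M$, we have $R_2 = E(\Jac R_1) = E(M) = R_1$ directly from the definition in Notation~\ref{new notation} and the fact that $R_1 = E(M)$ by construction — so this is actually a definitional matter once $\Jac R_1 = M$ is established, and the real work is showing $\Jac R_1 = M$ in the nonlocal case and in the degree-$2$ field case. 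That in turn comes from: $\Jac R_1$ is an ideal of $R_1$, hence of $R$ (being inside $R$?) — wait, $\Jac R_1$ need not lie in $R$; rather $MR_1 \subseteq \Jac R_1$ always (since $M R_1$ maps into the radical of $R_1/MR_1$... no, $MR_1$ need not be nilpotent). The correct route is: $M \subseteq \Jac R_1$ by Lemma~\ref{primary stable}(2) applied suitably (since $M = \Jac R \subseteq \Jac(E(\Jac R)) = \Jac R_1$), and then $\Jac R_1 / M$ is the Jacobson radical of $R_1/M$, which is a quadratic $F$-algebra; in types $F\times F$, $F \times F \times F$, and the quadratic-field case this radical is zero, forcing $\Jac R_1 = M$. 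I will make sure to invoke Lemma~\ref{primary stable}(2) to get $M \subseteq \Jac R_1$ cleanly at the outset, as that is the linchpin making $R_1/M$ (rather than $R_1/MR_1$) the object to feed into Handelman's lemma.
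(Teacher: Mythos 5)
Your overall strategy --- reduce modulo $M$ and classify the resulting quadratic $F$-algebra via Handelman's lemma --- is the right one, and several of your observations are sound (e.g.\ $MR_1=M$ since $R_1=E(M)$ stabilizes $M$; $M\subseteq\Jac R_1$ via Lemma~\ref{primary stable}(2); $R_1=R_2$ is definitional once $\Jac R_1=M$). But there is a genuine gap at the central step: Lemma~\ref{quadratic} applies only to \emph{finite-dimensional} $F$-algebras, and you have no license to apply it to $R_1/M$ itself. Your proposed justification --- that $M$ is stable, so $M^2=xM$ and $R_1=E(M)$ is module-finite over $R$ --- fails on two counts. First, stability of $M$ is not a hypothesis of this lemma; it is only added later, in Lemma~\ref{step one} (and indeed this lemma must be usable without it, since it feeds into Theorem~\ref{pre-connection}, where stability of the various $M_i$ is part of what is being established). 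Second, even granting $M^2=xM$, in the non-Noetherian setting $M$ itself need not be finitely generated, so $E(M)\cong x^{-1}M$ need not be a finitely generated $R$-module; for the bad stable domains that motivate the paper the relevant modules are typically infinitely generated. Degree-$2$ integrality of each element of $R_1$ does not bound $\dim_F R_1/M$ either.

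The fix --- and the route the paper takes --- is to never apply Handelman to $R_1/M$ globally, but only to the subrings $S=R+xR+yR$ for chosen $x,y\in R_1$. Because the extension is quadratic, $S$ is already a ring and is spanned over $R$ by $1,x,y$, so $\dim_F S/M\le 3$ and Handelman's classification applies to $S/M$. One then extracts the global statements by varying $x$ and $y$: in case (1) this shows $R_1=xR+R$ for every $x\in R_1\setminus R$ (whence degree $2$); in case (2), taking $x\in M_1\setminus M$ and letting $y$ range over $R_1$ shows $y\in R+M_1$, giving $R_1=R+M_1$ and the residue field claim without any circularity; and (4) follows by applying the same trick to $a,b\in\Jac R_1\setminus M$ to get $ab\in M$. (For the nonlocal case (3) the paper invokes \cite[Lemma 5.2]{OFS} for $R/M\subseteq R_1/M$ rather than Handelman directly, again avoiding any finite-dimensionality assumption.) Without this localization to three-generated subalgebras, your cases (1), (3) and (4) do not go through as written.
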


\begin{proof}
(1)  Suppose that $R_1$ is a local ring such that $ \Jac R_1=M$.  Then $R_1/M$ is a field and 
$\Jac R = M = \Jac R_1$. Hence   $R_1=R_2$. 
To see that  $R_1/M$ has degree $2$ over $F$, 
   let $x,y \in R_1 \setminus R$, and let
  $S = xR+ yR +R$.   
   Then $S$ is a ring since $R \subseteq R_1$ is a quadratic
  extension. Hence  
  $R/M \subseteq S/M$ is a quadratic extension, which by  
    Remark~\ref{quad remark} must be  integral.  Since $R_1/M$ is a field, the subring $S/M$ is an integral domain, and hence since $S/M$ is integral over the field $R/M$, $S/M$ is a field.  Since $R/M \subsetneq S/M$, 
  Lemma~\ref{quadratic}  implies that 
   $S/M$ has degree 2 as a field extension of  $R/M$.
 Since $x,y \not \in R$,
  this forces $S/M = (xR+R)/M = (yR+R)/M$.  Hence $x \in yR+R$
  for all $x,y \in R_1 \setminus R$, and so  for each
  $x \in R_1
  \setminus R$, we have $R_1 = xR+R$.  Therefore the degree of $R_1/M$ over
  $R/M$ is $2$. 

(2) Suppose that $R_1$ is a local ring such that $\Jac R_1 \ne M$.   Let $M_1$ denote the maximal ideal of $R_1$. We
claim
 that $R_1 = R + M_1$. By Lemma~\ref{primary stable}(2), $M \subsetneq M_1$. 
Let 
   $x \in M_1 \setminus M$, let $y \in
 R_1$, and let $S = R + xR + yR$.  
 Since  $x
 \in M_1 \cap S$, we have $M_1 \cap S \ne M$.
 Because $R \subseteq R_1$ is quadratic, $S$ is a ring, and,  by Remarks~\ref{quad remark}(2), $R \subseteq R_1$ is an integral extension. Since $R_1$ is local, so is $S$. 
   Also, $S/M$
cannot be a field since $M \subsetneq M_1 \cap S \subsetneq S$.  
Furthermore, since $S$ is local, the ring $S/M$ is not isomorphic to $F \times F$ or $F \times F \times F$. 
By Lemma~\ref{quadratic}, $S/M$ is a local ring with square
zero maximal ideal and residue field $F$. If $N$ is  the maximal ideal of $S$,
we have $N^2 \subseteq M$ and $S/N \cong F =  R/M$. Therefore  $y \in S =
R+N \subseteq R + M_1$. The choice  of $y \in R_1$ was arbitrary, and so $R_1 = R+M_1$, as desired. Moreover, since $M \subseteq M_1$, this implies that  $R_1$ has residue field $F$. 

(3) Suppose that $R_1$ is not a local ring, and let $k$
denote the number of maximal ideals of $R_1$.   
By Lemma~\ref{primary stable}(2), $M \subseteq \Jac R_1$, and so $R_1/M$ has $k>1$ maximal ideals. Also, since $R \subseteq R_1$ is a quadratic extension of rings,  
 $R/M \subseteq
R_1/M$ is a quadratic extension in which $R_1/M$ has more than one maximal ideal. By \cite[Lemma 5.2]{OFS},
 $R_1/M$ is isomorphic as an $F$-algebra to $
\prod_{i=1}^k F$, and $F = {\mathbb{F}}_2$ when $k = 3$. Also, since  we have obtained a decomposition of $R_1/M$ as a finite product
of fields, $\Jac R_1 = M$ and hence $R_1 =R_2$. 

(4) In light of (1) and (3), the  only case in which (4) is not immediate is that of (2). 
Suppose that   $R_1$ is local and $M \ne \Jac R_1$. By Lemma~\ref{primary stable}(2), $M \subsetneq \Jac R_1$. Let 
  $a,b \in \Jac R_1$ with $a,b \not \in  M$.
Define $T = aR +bR + R$. Then, as in the proof (2), Lemma~\ref{quadratic}
forces the square of the maximal ideal of the ring $T$ to be contained in
$M$. Thus $ab \in M$, and it follows that $(\Jac R_{1})^2 \subseteq M$.
 \end{proof}


For the next lemma, we recall that a ring $R$ is {\it Pr\"ufer} if every finitely generated regular ideal is invertible.

\begin{lemma} \label{Rush lemma} 
{\em (\cite[Proposition 2.1 and Theorem 2.3]{RushRep} and \cite[Corollary 5.11]{OFS})}
A  ring $R$ is finitely stable if and only if 
\begin{itemize}
\item[{\em (a)}] $R \subseteq \overline{R}$ is a quadratic extension, 
\item[{\em (b)}] $\overline{R}$ is a Pr\"ufer ring, and 
\item[{\em (c)}] there are at most two maximal ideals of $\overline{R}$ lying over each regular maximal ideal of $R$.
\end{itemize}
\end{lemma}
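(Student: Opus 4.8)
The plan is to prove the two implications separately. In both, the objects of interest are a finitely generated regular fractional ideal $I$ of $R$ and its endomorphism ring $E(I)$; after clearing denominators one may take $I$ to be an actual ideal, and the determinant trick shows $R \subseteq E(I) \subseteq \overline{R}$. By Lemma~\ref{primary stable}(3), $I$ is stable exactly when $I$ is invertible over $E(I)$, and, using that an invertible ideal of a local ring is principal (Remark~\ref{inv remark}), this can be tested after localizing $E(I)$ at each of its maximal ideals. I would also use repeatedly that, by Remarks~\ref{quad remark}(3), every ring between $R$ and $\overline{R}$ inherits quadraticity from $R \subseteq \overline{R}$, and that localization preserves quadraticity, the Pr\"ufer property, and finite stability.

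For the forward implication, suppose $R$ is finitely stable. For (a): given $x,y \in \overline{R}$, the fractional ideal $S = R + Rx + Ry$ is finitely generated and regular (it contains $1$), hence stable after clearing denominators, hence invertible over $E := E(S)$; since $1 \in S$ one has $E \subseteq S$, and localizing $E$ at each maximal ideal shows $S$ and $E$ agree there, so $S = E(S)$ is a ring and $xy \in S = R + Rx + Ry$, i.e.\ $R \subseteq \overline{R}$ is quadratic. For (b): any finitely generated regular ideal $J$ of $\overline{R}$ satisfies $dJ = I_0\overline{R}$ for a common denominator $d$ and a finitely generated regular ideal $I_0$ of $R$; stability of $I_0$ yields an inverse over $E(I_0)$, and extending it to $\overline{R}$ (where $E(I_0)\overline{R} = \overline{R}$) makes $I_0\overline{R}$, hence $J$, invertible over $\overline{R}$, so $\overline{R}$ is Pr\"ufer. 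For (c): since $R \subseteq \overline{R}$ is now known to be quadratic, Proposition~\ref{at most 3} bounds each fibre by three, and by Handelman's Lemma~\ref{quadratic} a fibre of size three forces residue field ${\mathbb F}_2$; over a regular maximal ideal this configuration would be contradicted by exhibiting a finitely generated primary ideal whose endomorphism ring is too small for it to be stable.

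For the reverse implication, assume (a)--(c) and let $I$ be a finitely generated regular ideal of $R$ with $E = E(I)$. Then $R \subseteq E \subseteq \overline{R}$, the extension $E \subseteq \overline{R}$ is quadratic, and $\overline{R}$ is Pr\"ufer, so $I\overline{R}$ is invertible over $\overline{R}$. I would then descend this to invertibility of $I$ over $E$: localizing $E$ at a maximal ideal $\mathfrak{m}$ makes $\overline{R}_{\mathfrak{m}}$ a semilocal Pr\"ufer (hence B\'ezout) ring with at most three, and at most two over a regular maximal ideal of $R$ by (c), maximal ideals over $\mathfrak{m}$, so $I\overline{R}_{\mathfrak{m}}$ is principal. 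Using Handelman's lemma to constrain how $\overline{R}_{\mathfrak{m}}$ sits over $E_{\mathfrak{m}}$ on residue fields, one transports a generator of $I\overline{R}_{\mathfrak{m}}$ to a generator of $IE_{\mathfrak{m}}$ (with those $\mathfrak{m}$ lying over a non-regular maximal ideal of $R$ handled separately and more easily, using that $I$ is regular). As this holds at every maximal ideal of $E$, $I$ is invertible over $E$, i.e.\ stable, and $R$ is finitely stable.

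The hard part, in both directions, is the same: reconciling the locally valuation structure of the Pr\"ufer ring $\overline{R}$ with the endomorphism ring $E(I)$ across the quadratic extension -- descending local principality in the reverse implication, and ruling out a three-element fibre over a regular maximal ideal in the forward one. Both rest on Handelman's classification (Lemma~\ref{quadratic}) together with a careful analysis of the finitely many valuation overrings of $\overline{R}$ above a regular maximal ideal; this is precisely the content of \cite[Proposition 2.1 and Theorem 2.3]{RushRep} and \cite[Corollary 5.11]{OFS}, which the proof would assemble.
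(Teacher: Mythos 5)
The paper does not actually prove this lemma: it is quoted from \cite[Proposition 2.1 and Theorem 2.3]{RushRep} and \cite[Corollary 5.11]{OFS}, so there is no in-paper argument to compare yours against. Judged on its own terms, your sketch has the right architecture: testing stability of $I$ as invertibility over $E(I)$ via Lemma~\ref{primary stable}(3), proving (a) by showing each module $R+Rx+Ry$ coincides with its own endomorphism ring (the localization step closes because elements of $\overline{R}$ are integral over $E(S)$, so a unit of $Q(R)$ lying in $S$ with inverse in $E(S)$ already lies in $E(S)$), and proving (b) by clearing denominators and pushing invertibility up to $\overline{R}$. Those two parts are essentially complete.

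The two steps you yourself flag as ``the hard part'' are, however, genuine gaps rather than compressed proofs. In the forward direction, (c) is asserted to follow because a three-point fibre ``would be contradicted by exhibiting a finitely generated primary ideal whose endomorphism ring is too small'' --- but no such ideal is exhibited, and producing one is exactly the content of the cited results (compare the dimension count in the proof of Theorem~\ref{quad char}, which is what actually excludes the three-maximal-ideal, residue-field-${\mathbb F}_2$ configuration). In the reverse direction, the decisive mechanism is not ``transporting a generator of $I\overline{R}_{\mathfrak m}$ down to $IE_{\mathfrak m}$ via Handelman's lemma'': the standard argument (Rush's, reproduced in this paper's proof of Theorem~\ref{big}, (4)$\Rightarrow$(1), and in \cite[Proposition 3.6]{OFS}) chooses the principal generator $x$ of $I\overline{R}$ \emph{inside $I$ itself} --- this avoidance argument over the at most two maximal ideals above $M$ is precisely where hypothesis (c) is consumed --- and then uses quadraticity of $R\subseteq\overline{R}$ to conclude that $x^{-1}I$ is a ring, forcing $x^{-1}I=E(I)$ and $I=xE(I)$. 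As written, your localization-of-$E$ scheme explains neither how a generator over the larger ring $\overline{R}_{\mathfrak m}$ descends to one over $E_{\mathfrak m}$ nor where (c) enters, so the implication (a)--(c) $\Rightarrow$ finitely stable is not established. Since both pivotal steps are ultimately deferred to the very references the lemma cites, the proposal is scaffolding around a citation rather than a proof.
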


The next theorem shows that the class of quadratic rings contains the  finitely stable local rings with stable maximal ideal.  

\begin{theorem} \label{pre-connection} 
Let  $(R,M)$ be a  finitely stable local ring with stable regular maximal ideal $M$, and let  $R_\infty$ be defined as in Notation~\ref{new notation}. 
Then $R$ is a quadratic ring for which $R_\infty$ has at most two maximal ideals. 
\end{theorem}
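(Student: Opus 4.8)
The plan is to verify the three defining conditions of a quadratic ring for $(R,M)$ and then bound the number of maximal ideals of $R_\infty$. Condition (1), that $R$ is local, and condition (3), that $M$ is stable and regular, are given by hypothesis, so the entire content is condition (2): that $R \subseteq R_\infty$ is a quadratic extension. Since $R_\infty = \bigcup_i R_i$ is a directed union, and a directed union of quadratic extensions of $R$ is quadratic (the defining relation $xy \in xR + yR + R$ for $x,y \in R_\infty$ only involves finitely many elements, hence takes place in some $R_i$), it suffices to show that each $R \subseteq R_i$ is a quadratic extension, which I would do by induction on $i$.

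For the induction, the key point is that the hypotheses propagate up the tower: I would first show that $R_1 = \End(\Jac R) = \End(M)$ is again a finitely stable ring whose Jacobson radical is stable and regular. Here I would use Lemma~\ref{Rush lemma}: since $R$ is finitely stable, $R \subseteq \overline{R}$ is quadratic, $\overline{R}$ is Pr\"ufer, and at most two maximal ideals of $\overline{R}$ lie over $M$; because $R \subseteq R_1 \subseteq \overline{R}$ (as $R_1$ is integral over $R$ by Remark~\ref{quad remark}(2) applied to the quadratic extension $R \subseteq \overline{R}$, once we know $R \subseteq R_1$ is quadratic), the extension $R_1 \subseteq \overline{R} = \overline{R_1}$ is again quadratic by Remark~\ref{quad remark}(3), $\overline{R_1}$ is still Pr\"ufer, and the number of maximal ideals of $\overline{R_1}$ over any regular maximal ideal of $R_1$ is still at most two — this last claim needs Proposition~\ref{at most 3} together with a count of which primes of $R_1$ sit under which primes of $\overline{R}$, using Lemma~\ref{quadratic R1} to see that $R_1$ has at most two or three maximal ideals and, when there are three, $F = \mathbb{F}_2$, in which case a finer analysis keeps the count at two over each. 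That $\Jac R_1$ is stable requires Lemma~\ref{primary stable}(6): $R_1$ is finitely stable and (in the relevant local components) local, and one shows $(\Jac R_1)^2 = x \Jac R_1$ for a suitable $x$, building on $M^2 = mM$ coming from stability of $M$ via Lemma~\ref{primary stable}(6) and the structural description in Lemma~\ref{quadratic R1}.

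The inductive step itself then says: once $R_i$ is known to be finitely stable with stable regular Jacobson radical, the same argument shows $R_i \subseteq R_{i+1}$ is quadratic and $R_{i+1}$ inherits the same properties. Stringing these together via transitivity of ``quadratic'' for the composite $R \subseteq R_i \subseteq R_{i+1}$ — more precisely, using that $R \subseteq R_{i+1} \subseteq \overline{R}$ with $R \subseteq \overline{R}$ quadratic, so $R \subseteq R_{i+1}$ is quadratic by Remark~\ref{quad remark}(3) — gives that every $R \subseteq R_i$ is quadratic, and hence $R \subseteq R_\infty$ is quadratic. Finally, for the bound on maximal ideals of $R_\infty$: since $R_\infty \subseteq \overline{R}$ and $R \subseteq \overline{R}$ is quadratic with at most two maximal ideals of $\overline{R}$ over $M$ (Lemma~\ref{Rush lemma}(c)), and $R_\infty$ is integral over $R$ so its maximal ideals all lie over $M$ and are dominated by maximal ideals of $\overline{R}$, I would argue that distinct maximal ideals of $R_\infty$ are separated by distinct maximal ideals of $\overline{R}$, forcing at most two; the one subtlety is ruling out the three-maximal-ideal $\mathbb{F}_2$ case at the level of $R_\infty$, for which I would show that case cannot persist — if some $R_i$ acquired three maximal ideals then, chasing through Lemma~\ref{quadratic R1}(3) and the stability hypotheses, one derives a contradiction with the two-maximal-ideal bound on $\overline{R}$ provided by Lemma~\ref{Rush lemma}.

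The main obstacle I anticipate is the bookkeeping in the inductive step: carefully tracking, through Lemma~\ref{quadratic R1}, how the maximal ideals and residue fields of $R_i$ evolve, verifying that the ``at most two maximal ideals of $\overline{R}$ over each regular maximal ideal'' condition is preserved when passing from $R_i$ to $R_{i+1}$ (especially handling the exceptional $F = \mathbb{F}_2$, three-factor case and showing it does not actually occur under the stability hypotheses), and confirming that $\Jac R_{i+1}$ is again stable via Lemma~\ref{primary stable}(6). Everything else — the directed-union argument, transitivity of quadratic extensions, and the final maximal-ideal count — is then comparatively routine.
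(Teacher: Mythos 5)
Your overall architecture (induct up the tower, invoke Lemma~\ref{Rush lemma}, squeeze between $R$ and $\overline R$, count maximal ideals by lying over) is close to the paper's, and your final step bounding the maximal ideals of $R_\infty$ by two is fine: once $R_\infty\subseteq\overline R$ is known, integrality plus Lemma~\ref{Rush lemma}(c) gives the count directly, with no need to worry separately about the three-ideal ${\mathbb{F}}_2$ case. The problem is at the base of the induction. Your only concrete mechanism for showing $R\subseteq R_1$ is quadratic is the squeeze $R\subseteq R_1\subseteq\overline R$ together with Remark~\ref{quad remark}(3); but to place $R_1=\End(M)$ inside $\overline R$ you need $R_1$ to be integral over $R$, and you obtain that integrality from Remark~\ref{quad remark}(2) ``once we know $R\subseteq R_1$ is quadratic.'' That is circular. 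Nor is $\End(M)\subseteq\overline R$ automatic in this setting: $M$ need not be finitely generated as an $R$-module (stability only makes it invertible, hence finitely generated, over $R_1$), so the determinant trick is unavailable, and for a general local ring $\End(M)$ can fail to be integral over $R$.

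The missing ingredient is the paper's Claim 1: $R_i\subseteq R_{i+1}$ is a quadratic extension if and only if every finitely generated $R_i$-submodule $A$ with $R_i\subseteq A\subseteq R_{i+1}$ is a stable fractional ideal of $R_i$ (\cite[Proposition 3.5]{OFS}). For such an $A$ one has $mA\subseteq mR_{i+1}\subseteq R_i$ for any nonzerodivisor $m\in M\subseteq M_i$, because $R_{i+1}=\End(M_i)$; hence $mA$ is a finitely generated regular ideal of $R_i$, and it is stable because every ring between a finitely stable ring and its total quotient ring is again finitely stable (\cite[Proposition 5.1]{OFS}). This yields quadraticity, hence integrality, of each step of the tower directly from finite stability of $R$ alone---no propagation of ``stable Jacobson radical'' up the tower, no case analysis via Lemma~\ref{quadratic R1}, and no prior knowledge that $R_i\subseteq\overline R$. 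With that in hand, the rest of your outline (locality of the $R_i$ until the tower stabilizes, $R_\infty\subseteq\overline R$, quadraticity of $R\subseteq R_\infty$ via Remark~\ref{quad remark}(3), and the two-maximal-ideal count) goes through essentially as in the paper.
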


\begin{proof} With $\{R_i\}_{i=0}^\infty$ as in Notation~\ref{new notation}, we prove the theorem by establishing four claims. 

\smallskip

  {\textsc{Claim 1}}. 
{\it  For each $i \geq 0$, if $R_i$ is a local ring with maximal ideal $M_i$, then $R_i \subseteq R_{i+1}$ is a quadratic extension.  }

\smallskip

We use the fact that $R_i \subseteq R_{i+1}$ is a quadratic extension if and only if every finitely generated $R_i$-submodule of $R_{i+1}$ containing $R_i$ is a stable fractional ideal of $R_i$; see \cite[Proposition 3.5]{OFS} or \cite[proof of Lemma 2.1]{Rush2gen}. Thus we let $A$ be a finitely generated $R_i$-submodule of $R_{i+1}$ with $R_i \subseteq A$. Let $m \in M$ be a nonzerodivisor. Then $m \in M_i$ and $$m \in mR_i \subseteq mA \subseteq mR_{i+1} \subseteq R_i,$$ since $R_{i+1} = \{q \in Q(R):qM_i \subseteq M_i\}$.  Therefore $mA$ is a finitely generated regular $R_i$-submodule of $R_i$, hence a finitely generated regular ideal of $R_i$. Every ring between a finitely stable ring and its total quotient ring is finitely stable \cite[Proposition 5.1]{OFS}. Hence $R_i$ is finitely stable, and so $mA$ is a stable ideal of $R_i$ by Definition~\ref{easy def}(3). 
Since $mA \cong A$, $A$ is a stable fractional ideal of $R_i$. This proves Claim 1. 

\smallskip

  {\textsc{Claim 2}}. 
{\it  For each $i \geq 0$ with $R_{i} \subsetneq R_{i+1}$, $R_i$ is local.} 

\smallskip

For $i = 0$, the claim holds, since $R = R_0$ is local. Suppose that $i>0$ and that $R_i$ is local whenever $R_i \subsetneq R_{i+1}$. If $R_{i+1} \subsetneq R_{i+2}$, then also $R_{i} \subsetneq R_{i+1}$ and so $R_i$ is local by induction. By Claim 1, $R_i \subsetneq R_{i+1}$ is a quadratic extension. Since $R_{i+1} \ne R_{i+2}$,  Lemma~\ref{quadratic R1}(3) implies that $R_{i+1}$ is  a local ring, as desired for Claim 2. 

\smallskip

  {\textsc{Claim 3}}. 
(i) {\it For each $i \geq 0$, $R_{i} \subseteq R_{i+1}$ is an integral extension.} (ii) {\it The extension $R \subseteq R_\infty$ is integral.} 

\smallskip

 The first statement follows from Claims 1 and 2 and Remark~\ref{quad remark}(2). The second statement follows from the first, since compositions of integral extensions are again integral \cite[Proposition 6,
   p.~307]{Bou}.

   \smallskip
   
   {\textsc{Claim 4}.}  
{\it  $R_\infty$ has at most two maximal ideals.}

\smallskip

  By Claim 3, $R_\infty \subseteq \overline{R}$, and by Lemma~\ref{Rush lemma}, $\overline{R}$ has at most two maximal ideals.  Since $\overline{R}$ is an integral extension of $R_\infty$, $R_\infty$ also has at most two maximal ideals. 

\smallskip

{\textsc{Claim 5}}. {\it $R$ is a quadratic ring.} 
   
\smallskip

By Lemma~\ref{Rush lemma}, $R \subseteq \overline{R}$ is a quadratic extension, and Claim 3 implies $R_\infty \subseteq \overline{R}$. By Remark~\ref{quad remark}(3), $R \subseteq R_\infty$ is a quadratic extension. By Definition~\ref{quadratic ring def}, $R$ is a quadratic ring. 
    \end{proof}

By adding to Lemma~\ref{quadratic R1} the assumption   that the
maximal ideal  of $R$ is stable, we obtain stronger results in the
case where $R_1$ is local.  In particular, $R_1$ inherits the
property of having a stable maximal ideal.

\begin{lemma}\label{step one} {
Let $R$ be a local ring with stable regular maximal ideal $M$. Suppose
that  $R \subseteq R_1$ is a quadratic
extension and $R_1$ is a local ring with maximal ideal $M_1$.

\begin{enumerate}

\item[{\em (1)}]  If $R_1 \ne R_2$, then $M_1 = MR_2$, $M_1$ is a stable ideal of $R_1$ and  $R_1 = R+M_1$.

\item[{\em (2)}]  If $R_1 = R_2$, then $M_1$ is a principal ideal of $R_1$ and $R_1 =
R +xR$ for every $x \in R_1 \setminus R$.

\end{enumerate}}
\end{lemma}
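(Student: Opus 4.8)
The plan is to use the stability of $M$ to realize $M$ as a principal ideal of $R_1$, and then to split into the two cases of the lemma by analyzing the square $M_1^2$ of the maximal ideal of $R_1$.

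First I would record the observations common to both parts. Since $M$ is stable and $R_1 = \End(\Jac R) = \End(M)$, Lemma~\ref{primary stable}(3) makes $M$ an invertible fractional ideal of $R_1$; as $R_1$ is local, Remark~\ref{inv remark} gives $M = xR_1$ for some $x$, which is necessarily a nonzerodivisor because $M$ is a regular fractional ideal of $R_1$ by Lemma~\ref{primary stable}(1). Note that $M$ is genuinely an ideal of $R_1$ (since $MR_1 \subseteq M$) and $x \in M \subseteq R$. By Lemma~\ref{primary stable}(2), $M = \Jac R \subseteq \Jac R_1 = M_1$. If $R = R_1$ both assertions are immediate ($M_1 = M = xR_1$ is principal, $R_1 \setminus R = \emptyset$, and $R_2 = \End(M) = R_1$), so I assume $R \ne R_1$; then Lemma~\ref{quadratic R1} applies, and in particular $M_1^2 \subseteq M = xR_1$ by Lemma~\ref{quadratic R1}(4). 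Combining this with $xM_1 \subseteq M_1^2$ (valid since $x \in M_1$) gives $M_1 \subseteq \tfrac1x M_1^2 \subseteq R_1$, so $\tfrac1x M_1^2$ is an ideal of the local ring $R_1$ caught between its maximal ideal and itself. Hence either (A) $M_1^2 = xM_1$ or (B) $M_1^2 = xR_1$.

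In case (B) one has $M_1\cdot(\tfrac1x M_1) = R_1$, so $M_1$ is an invertible, hence ($R_1$ being local) principal, ideal of $R_1$, and therefore $R_2 = \End(M_1) = R_1$. In case (A), Lemma~\ref{primary stable}(5) shows $M_1$ is stable, and Lemma~\ref{primary stable}(4), applied inside the ring $R_1$ with the invertible ideal $A := xR_1 \subseteq M_1$ (so that $AM_1 = xM_1 = M_1^2$), yields $M_1 = A\,\End(M_1) = xR_2 = MR_2$; the crucial point is that the endomorphism ring occurring here is $R_2$, not $R_1$. Part~(1) now follows quickly: if $R_1 \ne R_2$, then $\Jac R_1 \ne M$ (otherwise $R_1 = R_2$ by Lemma~\ref{quadratic R1}(1)), so $R_1 = R + M_1$ by Lemma~\ref{quadratic R1}(2); moreover (B) is excluded since it forces $R_1 = R_2$, so (A) holds and $M_1 = MR_2$ is a stable ideal of $R_1$, as required. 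For part~(2) I assume $R_1 = R_2$. In both cases $M_1$ is then a principal ideal of $R_1$ — in (A), $M_1 = xR_2 = xR_1$, so $\Jac R_1 = M$; in (B), $M_1 = zR_1$ for some $z$, with $\Jac R_1 \ne M$ — so only the statement "$R_1 = R + yR$ for each $y \in R_1 \setminus R$" remains. If $\Jac R_1 \ne M$, then $R_1 = R + M_1$ (Lemma~\ref{quadratic R1}(2)); writing $y = r + m$ with $r \in R$, $m \in M_1$, one gets $m \in M_1 \setminus M$ (else $y \in R$), so $R + yR = R + mR$, and $m$ generates $M_1$ over $R_1$ (being in $M_1 \setminus M_1^2$, as $M_1^2 = xR_1 \subseteq M$ and $m \notin M$); substituting $R_1 = R + mR_1$ into itself twice and using $m^2 \in M_1^2 = xR_1 \subseteq R$ then gives $R_1 = R + mR$. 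If instead $\Jac R_1 = M$, then $R_1/M$ is a field of degree $2$ over $F = R/M$ by Lemma~\ref{quadratic R1}(1); for $y \in R_1 \setminus R$, the $R$-submodule $S := R + Ry$ of $R_1$ is a ring (as $R \subseteq R_1$ is quadratic, Definition~\ref{quad def}), $M$ is an ideal of $S$, and $S/M$ is a ring quadratic over $F$ inside the field $R_1/M$, hence an integral domain integral over $F$, hence a field; since $y \notin R = R + M$ it properly contains $F$, so $S/M = R_1/M$, and because $M \subseteq S$ this forces $S = R_1$, i.e.\ $R_1 = R + Ry$.

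The step I expect to fight hardest with is the case $\Jac R_1 = M$ in part~(2): there $R + M_1 = R$, so the telescoping argument is unavailable, and one must instead exploit the degree-$2$ field-extension description of $R_1/M$, which rests ultimately on Handelman's Lemma~\ref{quadratic} via Lemma~\ref{quadratic R1}(1). A secondary point demanding care throughout is keeping track that the endomorphism ring appearing in Lemma~\ref{primary stable}(4)--(5) is $R_2 = \End(M_1)$ rather than $R_1$; it is precisely this that manufactures the identity $M_1 = MR_2$ in part~(1).
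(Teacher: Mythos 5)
Your proof is correct and follows the paper's argument in all essentials: realize $M = xR_1$ via stability of $M$ and locality of $R_1$, split according to whether $M_1^2$ equals $xM_1$ or $xR_1$, apply Lemma~\ref{primary stable}(4) to obtain $M_1 = xR_2 = MR_2$ in the first case, and finish with Lemma~\ref{quadratic R1} plus a telescoping substitution for $R_1 = R + xR$. Your handling of the subcase $M_1 \ne M$ in part (2) is in fact slightly more direct than the paper's, which re-derives $M_1^2 = M$ by an explicit element argument ($cd = um$ with $u$ a unit), whereas you observe that the alternative $M_1^2 = xM_1$ would force $M_1 = xR_1 = M$ and so conclude $M_1^2 = xR_1 = M$ immediately from the dichotomy.
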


\begin{proof} By Lemma~\ref{primary stable}(3), $M$ is an invertible ideal of $R_1$. 
Since $R_1$ is a local ring, $M = mR_1$ for some nonzerodivisor $m \in M$. By Lemma~\ref{quadratic R1}(4), $M_1^2 \subseteq M = mR_1$, so that $M_1^2 = mA$ for some ideal $A$ of $R_1$.

\smallskip

{\textsc{Claim}}: (i) {\it If $A = R_1$, then $M_1$ is invertible in $R_1$ and $R_1 = R_2$.} (ii) {\it If $A \subsetneq R_1$, then $M_1^2 = mM_1$ and $M_1 = mR_2 = MR_2$.  } 

\smallskip

For (i), $A = R_1 \Longrightarrow mR_1 = M_1^2 \Longrightarrow M_1(m^{-1}M_1) = R_1 \Longrightarrow M_1$ is invertible. We have $$R_2 = \{q \in Q(R_1):qM_1 \subseteq M_1\} = \{q \in Q(R_1):qM_1M_1^{-1} \subseteq M_1M_1^{-1}\} = R_1,$$ and so (i) holds.

For (ii), $A \subsetneq R_1 \Longrightarrow M_1^2 = mA \subseteq mM_1 \subseteq M_1^2 \Longrightarrow M_1^2 = mM_1$. By Lemma~\ref{primary stable}(4) we have
$$M_1 = mR_2 \subseteq MR_2 \subseteq M_1R_2 \subseteq mR_2 \Longrightarrow M_1 = mR_2 = MR_2,$$ and so (ii) holds. 

\smallskip

We return to the proof of Lemma~\ref{step one}. For (1), $M_1 = MR_2$ and $M_1$ is invertible in $R_2$ by Claim (ii). Thus $M_1$ is stable by Lemma~\ref{primary stable}(3). Since $E(M) = R_1 \ne R_2 = E(M_1)$, we have $M \ne M_1$. By Lemma~\ref{quadratic R1}(2), $R_1 = R + M_1$. This verifies (1).

For (2), we have $R_1 = R_2$. If $M_1 = M$, we have $M_1 = mR_1$ is principal. By 
Lemma~\ref{quadratic R1}(1), the field $R_1/M$ has degree $2$ over $R/M$, and hence $R_1  = R + xR$ for all $x \in R_1 \setminus R$, and so (2) holds if $M_1 = M$.  

Thus we assume that $M_1 \ne M$.  We have $M_1 $ is a principal ideal of $R_1$ in either case $A = R_1$ or $A \subsetneq R_1$, using either (i) of the claim ($M_1$ is invertible and so is a principal ideal of $R_1$), or (ii) of the claim ($M_1 = mR_2 = mR_1$). Thus the first part of (2) holds. 

It remains to show $R_1 = R + xR$, for all $x \in R_1$, given that $R_1= R_2$, $M_1 = nR_1$, for some $n \in R_1$, and $M_1 \ne M$.  We have also $R \ne R_1$ and $n$ is  a nonzerodivisor. 

By 
Lemma~\ref{quadratic R1}(4), $M_1^2 \subseteq M$. If $mM_1 = M_1^2$, then $$mnR_1 = n^2R_1 \Longrightarrow M = mR_1 = nR_1 = M_1,$$a contradiction. Thus $mM_1 \subsetneq M_1^2 \subseteq M = mR_1$. Hence there exist $c,d \in M_1$ with $cd \in mR_1 \setminus mM_1$; say $cd = um$, where $u$ is a unit of $R_1$. Then $m = u^{-1}cd \in M_1^2$, and $$mR_1 \subseteq M_1^2 \subseteq M = mR_1 \Longrightarrow M_1^2 = M.$$ Therefore $$  M_1/M = M_1/M_1^2 \cong R_1/M_1 \cong R/M,$$ by Lemma~\ref{quadratic R1}(2). Since $R/M$ is a field, we have two equations. 
$$(3.10{\rm a}) \:\: M_1 = qR+M, \: \: \forall q \in M_1 \setminus M. \:\:\:\:\ \ (3.10{\rm b}) \:\: R_1 = R+M_1.$$ 

Let $x \in R_1 \setminus R$. By Equation 3.10b, $x = r + m_1$, for some $r \in R$ and $m_1 \in M_1$, where $m_1 \not \in M$, since $x \not \in R$. Then, using Equations 3.10b and 3.10a, we have $$R+xR \subseteq R_1 = R + M_1 \subseteq R+m_1R = R + (r+m_1)R = R + xR.$$
This completes the proof of the lemma. 
  \end{proof}




We collect now some of the main technical properties of quadratic rings that will be needed in the next section.

\begin{proposition}  \label{properties} Let $R$ be a quadratic
ring with  maximal ideal $M$, let $\{R_i\}_{i=0}^\infty$ and $\{R_\infty\}$ be as in Notation~\ref{new notation}, and let $$n = \min\{k \in \{0,1,2,\ldots,\infty\}: R_k = R_\infty\}.$$      Then the following statements hold for $R$.  

%

\begin{enumerate}

\item[{\em (1)}]  There exists a nonzerodivisor $m \in M$ such that $M = mR_1$.

\item[{\em (2)}]  For each $i < n$,  $R_i = R + M_i$ and $R_i$ is a quadratic ring with stable
maximal ideal $M_i = mR_{i+1}$. 


\item[{\em (3)}] If $n = \infty$, then $R_\infty$ is a local ring with maximal ideal $mR_\infty$ and $R_\infty = R + mR_\infty$. 

\item[{\em (4)}]   $R_\infty$ has at most $3$ maximal ideals, each of which is principal.

\item[{\em (5)}] If $R_\infty$ is not 
 local, then  $\Jac R_\infty =
mR_\infty$.


\item[{\em (6)}] The $R$-module $R_\infty$ is
 finitely generated  if and only if $n < \infty$. 

\item[{\em (7)}] Suppose  $n < \infty$ and 
 $J:=\bigcap_{i=1}^\infty M^i$ is a one-dimensional ideal of $R$. Then $J$ is   the intersection of the one-dimensional prime ideals of $R$, the number of which is not more than the number of maximal ideals of $R_\infty$.  Moreover $J$ is also the intersection of 
  the one-dimensional prime ideals of  $R_\infty$, and hence $J$ is an ideal of $R_\infty$. 




\end{enumerate}
\end{proposition}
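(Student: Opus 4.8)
The plan is to read everything off the blow-up tower $\{R_i\}$: I want to show that as long as $R_i \ne R_\infty$ the ring $R_i$ is local and $M_i$ becomes principal after one more blow-up, and then (1)--(7) follow by bookkeeping along the tower together with Lemmas~\ref{quadratic R1} and~\ref{step one}. The observation that makes this go through is that $R_i \subseteq R_\infty$ is quadratic for every $i$: if $x,y \in R_\infty$ then $xy \in xR+yR+R \subseteq xR_i+yR_i+R_i$ because $R \subseteq R_i$; hence also $R_i \subseteq R_{i+1}$ is quadratic (Remark~\ref{quad remark}(3)). Since $R \subseteq R_\infty$ is integral (Remark~\ref{quad remark}(2)), every maximal ideal of $R_\infty$ lies over $M$, so by Proposition~\ref{at most 3} the ring $R_\infty$, and hence each $R_i$, is semilocal with at most three maximal ideals. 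Part (1) is then immediate: $M$ is stable, so by Lemma~\ref{primary stable}(3) it is an invertible fractional ideal of $E(M)=R_1$, and an invertible ideal of a semilocal ring is principal (Remark~\ref{inv remark}); thus $M=mR_1$ with $m$ a nonzerodivisor.

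For (2) and (3) I would induct along the tower. The preliminary point is that if $R_i$ is local with $R_i=R_{i+1}$ then $M_i$ is invertible, hence principal in $R_i$, hence $R_{i+2}=E(M_i)=R_{i+1}$ (the endomorphism ring of a principal regular ideal of $R_i$ is $R_i$), so inductively $R_\infty=R_i$; thus $R_i \subsetneq R_{i+1}$ for $i<n$. Now assume $R_i$ is local with stable maximal ideal $M_i=mR_{i+1}$ and $R_i=R+M_i$ (base case $i=0$ is (1)). If $i+1<n$ then $R_{i+1}\subsetneq R_{i+2}$, so Lemma~\ref{quadratic R1}(3) applied to $R_i \subseteq R_{i+1}$ forces $R_{i+1}$ to be local, and Lemma~\ref{step one}(1) gives $M_{i+1}=M_iR_{i+2}$ stable and $R_{i+1}=R_i+M_{i+1}$; multiplying $M_i=mR_{i+1}$ by $R_{i+2}$ gives $M_{i+1}=mR_{i+2}$, and $M_i\subseteq M_{i+1}$ gives $R_{i+1}=R+M_{i+1}$; finally $R_{i+1}$ is again a quadratic ring, since its own tower is the tail $\{R_{i+1+j}\}_j$ with union $R_\infty$. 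This is (2). For (3), with $n=\infty$ one has $mR_\infty=\bigcup_j M_j$; from $R_j=R+M_j$ and $M_j\cap R_i=M_i$ one computes $R_\infty/mR_\infty=\varinjlim R_j/M_j\cong F$, and since $mR_\infty\subseteq\Jac R_\infty$ (Lemma~\ref{primary stable}(2) applied level by level), $R_\infty$ is local with maximal ideal $mR_\infty$ and $R_\infty=R+mR_\infty$.

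For (4) and (5): $R_\infty$ is semilocal with $\le 3$ maximal ideals by the first paragraph. If $n=\infty$, its only maximal ideal $mR_\infty$ is principal by (3). If $n<\infty$ and $R_\infty=R_n$ is local, Lemma~\ref{step one}(2) applied to $R_{n-1}\subseteq R_n$ (with $R_n=R_{n+1}$) shows the maximal ideal of $R_n$ is principal. If $n<\infty$ and $R_\infty$ is not local, Lemma~\ref{quadratic R1}(3) applied to $R_{n-1}\subseteq R_n$ gives $\Jac R_\infty=M_{n-1}=mR_\infty$ (the last equality from (2)) and $R_\infty/\Jac R_\infty\cong F^k$ with $k\in\{2,3\}$, so each maximal ideal of $R_\infty$ is of the form $fR_\infty+mR_\infty$ with $f$ lifting an idempotent of $F^k$; this ideal is finitely generated, regular, and locally principal (it localizes to $m(R_\infty)_{\mathfrak m}$ at each maximal ideal $\mathfrak m$), hence invertible, hence principal since $R_\infty$ is semilocal, which proves both (4) and (5). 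For (6): if $R_\infty$ is finitely generated over $R$, finitely many generators lie in some $R_N$, so $R_\infty=R_N$ and $n\le N<\infty$; conversely, for $n<\infty$ and $i<n$ the isomorphisms $R_{i+1}/R_i\cong M_{i+1}/M_i$ (from $R_{i+1}=R+M_{i+1}$, $R_i=R+M_i$, $M_{i+1}\cap R=M\subseteq M_i$) and $M_{i+1}/M_i\cong R_{i+2}/R_{i+1}$ (from $M_{i+1}=mR_{i+2}$, $M_i=mR_{i+1}$, division by $m$) show all the successive quotients $R_{i+1}/R_i$ are isomorphic; the last of them, $R_n/R_{n-1}$, is finitely generated (two generators by Lemma~\ref{step one}(2) when $R_n$ is local, a quotient of $R_n/M_{n-1}\cong F^k$ otherwise), so all are, whence $R_\infty=R_n$ is module-finite over $R$.

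For (7), assume $n<\infty$. Since $M=mR_1$ one has $M^i=m^iR_1$, and iterating $mR_{j+1}=M_j\subseteq R_j$ gives $m^nR_\infty\subseteq R$; a short divisibility argument then yields $J=\bigcap_i m^iR_1=\bigcap_i m^iR_\infty$, which already shows $J$ is an ideal of $R_\infty$. By (4) every maximal ideal of $R_\infty$ is principal and regular, so each localization $(R_\infty)_{\mathfrak m}$ is a one-dimensional local ring with principal regular maximal ideal, hence ``valuation-like'': every nonzero element is a unit times a power of a uniformizer or lies in the unique minimal prime of $(R_\infty)_{\mathfrak m}$, whose quotient is a DVR. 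Consequently $R_\infty$ has at most one minimal prime below each of its $\le 3$ maximal ideals, and I would show that $J$, which equals $\bigcap_i(\Jac R_\infty)^i$, is the nilradical of $R_\infty$ --- here the hypothesis that $J$ is one-dimensional is what guarantees this intersection is a proper radical ideal and not $\Jac R_\infty$-primary --- so $J$ is the intersection of the finitely many one-dimensional (= minimal) primes of $R_\infty$, in number at most that of the maximal ideals. Finally, contracting along the integral extension $R\subseteq R_\infty$ and using $\Nil(R_\infty)\cap R=\Nil(R)$ gives the corresponding statement for $R$, with the count passing down. The step I expect to be the main obstacle is exactly this last one: the identities $M^i=m^iR_1$ and $m^nR_\infty\subseteq R$ are elementary but fussy, and because localization does not commute with the infinite intersections involved, pinning $J$ down locally --- hence identifying it with $\Nil(R_\infty)$ and with the intersection of the one-dimensional primes --- forces one to use both the valuation-like structure of the $(R_\infty)_{\mathfrak m}$ and the one-dimensionality hypothesis. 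Everything in (2)--(6), by contrast, is routine bookkeeping once ``$R_i\subseteq R_\infty$ is quadratic'' is noticed.
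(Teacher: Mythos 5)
Your handling of (1)--(6) is essentially the paper's own argument: the key observation that each $R_i\subseteq R_\infty$ is quadratic, the induction through Lemmas~\ref{quadratic R1}(3) and~\ref{step one}(1), and ``invertible over a semilocal ring implies principal'' for (4) all match the paper's proof. Your isomorphic-successive-quotients argument for (6) is a nice variant of the paper's descent claim, but note one small repair it needs: Lemma~\ref{step one}(2) gives $R_n=R_{n-1}+xR_{n-1}$, i.e.\ generation over $R_{n-1}$, whereas your chain of isomorphisms $R_1/R_0\cong\cdots\cong R_n/R_{n-1}$ is a chain of $R$-module isomorphisms, so you need $R_n/R_{n-1}$ finitely generated over $R$; this follows from $xM_{n-1}=xmR_n\subseteq mR_n=M_{n-1}\subseteq R_{n-1}$, whence $R_n=R_{n-1}+xR$, but it is not what the lemma literally says.

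The genuine gap is in (7). You propose to identify $J$ with the nilradical of $R_\infty$ and the one-dimensional primes with the minimal primes, on the grounds that each $(R_\infty)_{\mathfrak m}$ is a \emph{one-dimensional} local ring with principal regular maximal ideal. But the proposition does not assume $R$ (hence $R_\infty$) is one-dimensional --- only that the \emph{ideal} $J$ is one-dimensional --- and your identification fails without that assumption. Take $R=V$ a valuation domain with value group ${\mathbb Z}\oplus{\mathbb Z}$ ordered lexicographically: its maximal ideal is principal and regular, $R_1=R_\infty=V$, so $V$ is a quadratic ring with $n=0$, and $J=\bigcap_i M^i$ is the height-one prime $P$, a one-dimensional ideal. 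The conclusion of (7) holds ($P$ is the unique one-dimensional prime), but $P$ is neither a minimal prime nor the nilradical, since $V$ is a two-dimensional domain. The correct route, which is the paper's, is the Krull-intersection-type result \cite[Theorem 2.2]{AMN}: for each principal regular maximal ideal $N_j$ of $R_\infty$, the ideal $Q_j=\bigcap_k N_j^k$ is prime and contains every prime properly contained in $N_j$. The hypothesis that $J$ is one-dimensional (together with integrality of $R\subseteq R_\infty$, so that $\dim R_\infty/J=1$) then forces each $Q_j\supseteq J$ to have dimension exactly one; since every nonmaximal prime of $R_\infty$ lies in some $Q_j$, the one-dimensional primes of $R_\infty$ are exactly $Q_1,\ldots,Q_r$, and $J=\bigcap_k(\Jac R_\infty)^k=Q_1\cap\cdots\cap Q_r$, with the statement for $R$ obtained by contraction. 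Your direct computation $J=\bigcap_i m^iR_\infty$ is correct and does establish that $J$ is an ideal of $R_\infty$, but the passage from there to ``intersection of the one-dimensional primes'' must go through the $Q_j$, not through the nilradical.
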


\begin{proof}
(1) By Definition~\ref{quadratic ring def}, $M$ is stable. Since  $R \subseteq R_1$ is a quadratic extension,  Proposition~\ref{at most 3} implies that $R_1$ has at most 3 maximal ideals. By 
Lemma~\ref{primary stable}(3),
 $M$ is invertible in $R_1$. Since $R_1$ is local, $M = mR_1$, for some nonzerodivisor $m \in M$.

(2) 
The proof is by induction. If $n=0$ or $i=0$, the statement holds by (1). Suppose the result holds for some $i$ with $0 < i < n-1$. Then $R_i$ is a quadratic ring with stable maximal ideal $M_i = mR_{i+1}$ and $R_i = R + M_i$. Also $R \subsetneq R_{i} \subsetneq R_{i+1} \subsetneq R_{i+2}$, by the minimality of $n$. By Lemma~\ref{quadratic R1}(3), $R_{i+1}$ is a local ring. By Lemma~\ref{quadratic R1}(1), the maximal ideal $M_{i+1}$ of $R_{i+1}$ properly contains $M_i$. By Lemma~\ref{step one}(1), $M_{i+1} = M_iR_{i+2} = mR_{i+1}R_{i+2} = mR_{i+2},$ and $R_{i+1} = R_i + M_{i+1} = R + M_{i+1}$. Since 
 $R \subseteq R_\infty$ is a quadratic extension, it is clear that $R_{i+1} \subseteq R_\infty$ is also a quadratic extension, and so
 $R_{i+1}$ is a quadratic ring.

(3) If $n = \infty$, then by (2), each $R_i$ is local with maximal ideal $mR_{i+1}$ and $R_i = R + M_i$. Hence $R_\infty$ is a local ring with maximal ideal $mR_\infty$ and $R_\infty = R + mR_\infty$.

(4)  By Proposition~\ref{at most 3}, $R_n$ has at most three maximal ideals. If $n = \infty$, then (4) follows from (3). If $n = 0$, then $R = R_0$ is local and it has principal maximal ideal by (1), so that (4) holds.

Suppose that $0 < n < \infty$. Then $R_{n-1} \ne R_n$. By (2), $R_{n-1}$ is a quadratic ring and the maximal ideal $M_{n-1} = mR_n$ is stable. Also $R_n = R_{n+1}$. If $R_n $ is local, then Lemma~\ref{step one}(2) implies that the maximal ideal $M_n$ is principal, and so we have established (4). 

If $R_n$ is not local, then Lemma~\ref{quadratic R1}(3) implies that $\Jac R_n = M_{n-1} = mR_n$.  
Since $R_n$ has at most three maximal ideals, we may denote them 
 by $N_1,N_2,\ldots,N_t$, where $2 \leq t \leq 3$.  Then $mR_n = \bigcap_{i=1}^t N_i= \prod_{i=1}^t N_i$, by \cite[Theorem 1.3]{Ma}.  Then each $N_i$ is invertible; e.g., $N_1$ has inverse $m^{-1}\prod_{i=2}^t N_i$. By Remark~\ref{inv remark}, each of the $N_i$ is principal. Thus (4) holds.

(5) Since $R_\infty$ is not local, (3) implies that $n < \infty$. As noted in the proof of (4), we have $\Jac R_n = mR_n$.

(6)  If $n = \infty$, then clearly $R_\infty = \bigcup_{i=1}^\infty R_i$ is not finitely generated. 

For the converse, the case $n = 0$ is clear. Let $0 < n < \infty$. We show first that $R_n$ is a finitely generated $R_{n-1}$-module. 
\smallskip

Case 1: If $R_n$ is local, then Lemma~\ref{step one}(2) implies that $R_n = R_{n-1} + xR_{n-1}$, for every $x \in R_n \setminus R_{n-1}$, and so $R_n$ is finitely generated as an $R_{n-1}$-module if $R_n$ is local.

\smallskip

Case 2: If $R_n$ is not local, then, by  (3), $n < \infty$. By (2), $R_{n-1}$ is a quadratic ring, and so by Lemma~\ref{quadratic R1}(3), $R_n/M_{n-1}$ has dimension at most three as a vector space over $R_{n-1}/M_{n-1}$. Therefore $R_n$ is a finitely generated $R_{n-1}$-module. 

\smallskip

 If $n = 1$, then the proof is complete. 
If $n>1$, the result will follow from the following claim (using induction).  

\smallskip

{\textsc{Claim.}} {\it Suppose $0 \leq k \leq n-2$ and $R_{k+2}$ is finitely generated as an $R_{k+1}$-module. Then $R_{k+1}$ is finitely generated as an $R_k$-module. }

\smallskip

Write $R_{k+2} = b_1R_{k+1} + \cdots + b_tR_{k+1}$, for some $b_1, \ldots,b_t \in R_{k+2}$.  By (2), we have $M_{k+1} = mR_{k+2}$ and \begin{eqnarray*}
R_{k+1} & = & R_k + mR_{k+2}\:\:  = \:\: R_k + mb_1R_{k+1} + \cdots + mb_tR_{k+1} \\
\: & = & R_k + mb_1(R_k+mR_{k+2})+\cdots + mb_t(R_k+mR_{k+2}).
\end{eqnarray*} 
By Lemma~\ref{quadratic R1}(4), $m(mR_{k+2}) \subseteq M_{k+1}^2 \subseteq M_k$. Each $mb_i \in R_{k+1}$. Therefore $$R_{k+1} \subseteq R_k + mb_1R_k + \cdots + mb_tR_k + M_k \subseteq R_{k+1}.$$ This proves the claim.

(7) 
 By (4),
$R_\infty$ has at most $3$ maximal ideals, say
$N_1,\ldots,N_r$, all of which are regular and  principal. 
For each $j =1,\ldots,r$, since  $N_j$ is a principal regular ideal
of $R_\infty$, it follows  that $Q_j:=\bigcap_{k>0}N_j^k$ is a prime ideal and 
every prime ideal of $R_\infty$ properly contained in $N_j$ is contained in 
$Q_j$ \cite[Theorem 2.2]{AMN}.  Therefore each nonmaximal prime ideal of $R_\infty$ is contained in one of the $Q_j$. 
 Now $R \subseteq R_\infty$ is a quadratic, hence integral, extension. By assumption $J=\bigcap_{k>0}M^k $ is a one-dimensional ideal of $R$. Every prime ideal of $R_\infty$ containing $J$ also has dimension at most one. 
Since $J  \subseteq Q_1 \cap \cdots \cap Q_r$,   
each prime ideal $Q_i$ has dimension one. Since also each nonmaximal prime ideal of $R_\infty$ is contained in one of the $Q_i$, 
 the set of one-dimensional prime ideals of $R_\infty$ is   $\{Q_1,\ldots,Q_r\}$. Since $R_\infty$ is an integral extension of $R$, it follows that $\{Q_1 \cap R, \ldots, Q_r \cap R\}$  is the set of   one-dimensional prime ideals of $R$. 
%
 Now 
 since $\Jac R_\infty = N_1\cdots
N_r = N_1 \cap \cdots \cap N_r$, we have that $\bigcap_{k>0}(\Jac
R_\infty)^k = Q_1 \cap \cdots \cap Q_r$.
  In fact,  since $R_n = R_\infty$, an inductive argument using 
 Lemma~\ref{quadratic R1}(4) shows that 
 some power of the ideal  $\Jac
R_\infty$ is contained in $M$. Hence $J = \bigcap_{k>0}M^k =
\bigcap_{k>0}(\Jac R_\infty)^k = Q_1 \cap \cdots \cap Q_r$.  
 \end{proof}

\begin{corollary} \label{properties cor} Let $R$ be a one-dimensional quadratic ring with maximal ideal $M$,  let $J = \bigcap_{i=1}^\infty M^i$, let $\overline{R}$ be the integral closure of $R$, and let $\{R_i\}_{i=0}^\infty$ and $R_\infty$ be as in Notation~\ref{new notation}. Then 
\begin{itemize}
\item[{\em (1)}] $R_\infty = \overline{R}$. 

\item[{\em (2)}] 
The $R$-module $\overline{R}$ is finitely generated if and only if $R_\infty = R_n$ for some $n \geq 0$. 
 
 \item[{\em (3)}]  The ring $R/J$ has integral closure $R_\infty/J$ and  total quotient ring  $\Q(R)/J$.
 
 \end{itemize}

\end{corollary}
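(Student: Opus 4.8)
The plan is to extract all three parts from the structure theory for quadratic rings in Proposition~\ref{properties}, using the facts on quadratic (hence integral) extensions from Section~3. For~(1) one inclusion is immediate: $R\subseteq R_\infty$ is a quadratic, hence integral, extension (Remarks~\ref{quad remark}(2)), so $R_\infty$ consists of elements of $\QQ(R)$ integral over $R$, that is, $R_\infty\subseteq\overline{R}$. For the reverse inclusion I would show that $R_\infty$ is integrally closed in $\QQ(R)$; since $\overline{R}$ is integral over $R$, hence over $R_\infty$, and $\overline{R}\subseteq\QQ(R)$, this would give $\overline{R}\subseteq R_\infty$, and hence $R_\infty=\overline{R}$.

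To prove $R_\infty$ integrally closed in $\QQ(R)$ I would argue locally. By Proposition~\ref{properties}(4), $R_\infty$ is semilocal with every maximal ideal $N_j$ principal and regular, so $\Jac R_\infty=\bigcap_jN_j=\prod_jN_j$ is principal and regular, say $\Jac R_\infty=tR_\infty$ with $t$ a nonzerodivisor; then $(\Jac R_\infty)^2=t(\Jac R_\infty)$ with $t\in\Jac R_\infty$, so by Lemma~\ref{primary stable}(4) (applied with $A=tR_\infty$) the ideal $\Jac R_\infty$ is stable and $\End(\Jac R_\infty)=R_\infty$. Since $R_\infty$ is the intersection of its localizations at the $N_j$, it suffices to check that a one-dimensional local ring $V$ with principal regular maximal ideal is integrally closed in its total ring of quotients. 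For such a $V$, arguing as in the proof of Proposition~\ref{properties}(7): by \cite[Theorem 2.2]{AMN}, $P:=\bigcap_k(\Jac V)^k$ is a prime ideal containing every nonmaximal prime, $V/P$ is a DVR, and $P=(\Jac V)P$; given $z\in\QQ(V)$ integral over $V$, reducing modulo the minimal prime of $\QQ(V)$ lands $z$ in the normal ring $V/P$, so after subtracting an element of $V$ we may assume $z$ lies in that minimal prime, and a direct computation with the integral equation, using $P=(\Jac V)P$, then forces $z\in V$. This establishes~(1), and~(2) is then immediate: by~(1), $\overline R=R_\infty$, while Proposition~\ref{properties}(6) says $R_\infty$ is finitely generated over $R$ exactly when $n<\infty$, that is, exactly when $R_\infty=R_n$ for some $n\geq0$.

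For~(3) I would first identify $J$. Writing $M=mR_1$ (Proposition~\ref{properties}(1)) gives $mR\subseteq M\subseteq\Jac R_1$, and since $\sqrt{mR}=M$ (as $m$ is a nonzerodivisor of the one-dimensional local ring $R$), an argument as in Proposition~\ref{properties}(7) via \cite[Theorem 2.2]{AMN} shows that $J=\bigcap_km^kR$ is a prime ideal of $R$ containing every prime properly contained in $M$, in particular every minimal prime, so $\Nil(R)\subseteq J$; one also checks $J=\bigcap_kM^k$ using $mR\subseteq M$ and $R_1\subseteq m^{-1}R$. Next, $J$ is an ideal of $R_\infty$: from Proposition~\ref{properties}(1) and~(2) one gets $R_i\subseteq m^{-i}R$ for all $i$, so for $q\in R_i$ and $j\in J$ one has $m^iqj\in J\subseteq m^iR$, whence $qj\in J$. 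And $J$ is an ideal of $\QQ(R)$: for a nonzerodivisor $s\in M$, $\sqrt{sR}=M\ni m$ gives $m^N\in sR$ for some $N$, so $J=\bigcap_km^{kN}R\subseteq\bigcap_ks^kR\subseteq\bigcap_kM^k=J$, whence $j/s\in\bigcap_ks^kR\subseteq J$ for every $j\in J$; thus $J=sJ$ (and for a unit $s$ this is trivial). With this in hand, $R_\infty/J$ is integral over $R/J$ (because $R_\infty$ is integral over $R$) and is integrally closed in $\QQ(R)/J$ (an integral equation over $R_\infty/J$ lifts, via $J\subseteq R_\infty$, to one over $R_\infty$, which is integrally closed in $\QQ(R)$ by~(1)), so $R_\infty/J$ is the integral closure of $R/J$; and $\QQ(R)/J=\QQ(R/J)$ because $\QQ(R)$ is zero-dimensional and $\Nil(R)\subseteq J$, so $\QQ(R)/J$ is reduced and zero-dimensional, hence von Neumann regular, hence equal to the total ring of quotients of its subring $R/J$.

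The step I expect to be the main obstacle is the integral-closedness assertion inside~(1), namely that a one-dimensional local ring $V$ with principal regular maximal ideal is integrally closed in $\QQ(V)$: the difficulty is controlling the zerodivisors and nilpotents carried by the divisible prime $P=\bigcap_k(\Jac V)^k$ (that is, the interaction between $\QQ(V)$, the DVR $V/P$, and the $V/P$-module $P$), since the usual one-dimensional normality criteria do not apply verbatim in the non-reduced, non-Noetherian setting. A secondary delicate point is the case $n=\infty$ in~(3), where $R_\infty$ is only the union $\bigcup_iR_i$, so the verifications that $J$ is an ideal of $R_\infty$ and of $\QQ(R)$, and that $\QQ(R)=\QQ(R_\infty)$, must be carried out by hand rather than quoted.
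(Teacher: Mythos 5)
Parts (1) and (2) of your argument are essentially workable. For (1) the key fact you need is exactly the one the paper uses — $R_\infty$ is one-dimensional with finitely many maximal ideals, all principal and regular, hence every regular ideal of $R_\infty$ is principal and $R_\infty$ is integrally closed in $\Q(R)$ (the paper simply cites \cite[Theorem 10.18, p.~237]{LM} for this, avoiding localization altogether). Your localize-and-check route can be patched, but it is riskier than you acknowledge: in the presence of zerodivisors the statement ``$R_\infty$ is the intersection of its localizations'' and the passage from $\Q(R_\infty)$ to $\Q\bigl((R_\infty)_{N_j}\bigr)$ both need justification, and your local argument via the divisible prime $P$ is more complicated than necessary (for a one-dimensional local $V$ with maximal ideal $tV$, $t$ regular, one has $\Q(V)=V[1/t]$; writing $z=a/t^k$ with $k$ minimal forces $a$ to be a unit if $k\geq 1$, and then the integral equation for $z$ puts $1$ into $tV$, a contradiction). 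Part (2) is the same one-line deduction as in the paper.

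Part (3), however, contains a genuine error. Your claims that $J=\bigcap_k m^kR$ is a prime ideal containing every minimal prime, that $\Nil(R)\subseteq J$, and that $\Q(R)/J$ is reduced are all false in general. The citation \cite[Theorem 2.2]{AMN} is used in the paper only for principal ideals that are themselves prime (the principal regular \emph{maximal} ideals $N_j$ of $R_\infty$); it does not apply to an arbitrary principal regular ideal such as $mR$. Concretely, for the Bass ring $R=k[[x,y]]/(xy)$ one has $J=0$, which is not prime and does not contain the minimal primes $(x)$ and $(y)$; and for the quadratic ring $R=V\star V$ (Nagata idealization of a DVR $V$ by itself, with $M=tV\star V$ and $M^k=t^kV\star t^{k-1}V$) one has $J=0$ while $\Nil(R)=0\star V\neq 0$, so $\Q(R)/J=\Q(R)$ is not reduced and your ``von Neumann regular, hence equal to $\Q(R/J)$'' step collapses. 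The paper's argument needs none of this: since $M^2=mM$ and $\dim R=1$, one has $\Q(R)=R[1/m]$ and $J=(R:_R R[1/m])$, so $J$ is an ideal of $\Q(R)$; the image $m^*$ of $m$ is a nonzerodivisor in the one-dimensional local ring $R/J$, whence $\Q(R)/J=(R/J)[1/m^*]=\Q(R/J)$, and then $R_\infty/J$ is integrally closed in $\Q(R)/J$ by part (1). You should replace your identification of $\Q(R)/J$ with $\Q(R/J)$ by this conductor argument; the integral-closure half of your (3) is fine once that is done.
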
 

\begin{proof} 
(1) 
Since $R \subseteq R_\infty$ is a quadratic, hence integral, extension, and $R$ has Krull dimension one,  $R_\infty$ also has  Krull dimension one. 
Since there are only finitely many maximal ideals of the one-{dimension\-al} ring $R_\infty$, all of which are principal and regular, it follows that every regular ideal of $R_\infty$ is principal, and hence the integral extension $R_\infty$ of $R$ is integrally closed in $\Q(R)$; cf.~\cite[Theorem 10.18, p.~237]{LM}.

(2)  This follows from (1) and   Proposition~\ref{properties}(6). 

(3)  By Proposition~\ref{properties}(1), there exists a nonzerodivisor $m \in M$ such that $M^2 = mM$.   Since $R$ has dimension one,  $\Q(R) = R[1/m]$. Also, since $M^2 = mM$, it follows that $J = (R:_R R[1/m])$, and so $J$ is an ideal of $\Q(R)$.  Let $m^*$ denote the image of $m$ in $R/J$. Then $m^*$ is a nonzerodivisor in the one-dimensional ring $R/J$. Hence $Q(R)/J = R[1/m]/J = (R/J)[1/m^*] = \Q(R/J)$. By (1), $\overline{R} = R_\infty$, and so it follows that  $R_\infty/J$ is the integral closure of $R/J$ in $Q(R/J) = Q(R)/J$. 
%
\end{proof}






By Proposition~\ref{properties}(4), $R_\infty$ has at most three maximal ideals if $R$ is a quadratic ring. 
In the next theorem we describe 
the quadratic rings $R$ for which $R_\infty$ has exactly three maximal ideals.

\begin{theorem} \label{quad char} Let  $R$ be a local ring with a regular maximal ideal. Then the following statements are equivalent. 

\begin{itemize}

\item[{\em (1)}] $R$ 
is a quadratic ring such that ${R}_\infty$ has three maximal ideals. 

\item[{\em (2)}] There is a ring $S$ between $R$ and $\Q(R)$ such that 
\begin{itemize}

\item[{\em (i)}] $S$  
has exactly 3 maximal ideals, 

\item[{\em (ii)}]  each maximal ideal of $S$ 
is principal and  regular with residue field isomorphic to
${\mathbb{F}}_2$, and 

\item[{\em (iii)}]  $R = k + \Jac S$, where $k$ is the prime subring of $R$. 

\end{itemize}
\end{itemize}
\end{theorem}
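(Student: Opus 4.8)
The proof divides into the two implications; (2)$\Rightarrow$(1) is essentially a direct computation, while the substance of (1)$\Rightarrow$(2) is that the tower of Notation~\ref{new notation} stabilizes at once, i.e.\ $R_1=R_\infty$.

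\emph{(2)$\Rightarrow$(1).} Let $N_1,N_2,N_3$ be the maximal ideals of $S$. They are pairwise comaximal, so $\Jac S=N_1\cap N_2\cap N_3=N_1N_2N_3$, and writing $N_i=s_iS$ (each $s_i$ a nonzerodivisor, since $N_i$ is regular and principal) we get $\Jac S=mS$ with $m:=s_1s_2s_3$ a nonzerodivisor of $R$. Since every residue field of $S$ is $\mathbb F_2$, the canonical map $k\to S/\Jac S\cong\mathbb F_2\times\mathbb F_2\times\mathbb F_2$ has image the diagonal $\mathbb F_2$, so $R/\Jac S=(k+\Jac S)/\Jac S\cong\mathbb F_2$; thus $M:=\Jac S$ is a maximal ideal of $R$. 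For $j\in\Jac S$ we have $(1-j)^{-1}=1+j(1-j)^{-1}\in k+\Jac S=R$, so $1-j$ is a unit of $R$; as $M$ is maximal, every $x\in R\setminus M$ satisfies $1=xr+j$ with $r\in R$, $j\in M$, hence $x$ is a unit, so $R$ is local with maximal ideal $M$, which is regular because $m\in M$. Next $R_1=E(M)=\{q\in\Q(R):q\,mS\subseteq mS\}=\{q:qS\subseteq S\}=S$, so $R_i=S$ for all $i\ge1$ and $R_\infty=S$ has exactly three maximal ideals. From $M=mS$ we get $M^2=m^2S=mM$ with $m\in M$, so $M$ is stable by Lemma~\ref{primary stable}(5). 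Finally $M$ is an ideal of both $R$ and $R_\infty=S$, and $R/M\cong\mathbb F_2\hookrightarrow\mathbb F_2^3\cong S/M$ is a quadratic extension (straightforwardly: every $\mathbb F_2$-subspace of $\mathbb F_2^3$ containing $(1,1,1)$ is a subring), so by Remark~\ref{quad remark}(3) the extension $R\subseteq R_\infty$ is quadratic; hence $R$ is a quadratic ring whose $R_\infty$ has three maximal ideals.

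\emph{(1)$\Rightarrow$(2).} Put $n=\min\{k:R_k=R_\infty\}$; since $R_\infty$ is not local, $n<\infty$ by Proposition~\ref{properties}(3), and $n\ge1$, and each $R_i$ is local for $i<n$ (Proposition~\ref{properties}(2)). Along the tower $R=R_0\subsetneq R_1\subsetneq\cdots\subsetneq R_n=R_\infty$: at a step $R_i\subsetneq R_{i+1}$ with $i<n-1$ one cannot have $\Jac R_{i+1}=M_i$ (else $R_{i+1}=R_{i+2}$ by Lemma~\ref{quadratic R1}(1), contradicting $i+1<n$), so Lemma~\ref{quadratic R1}(2) gives $R_{i+1}/M_{i+1}\cong R_i/M_i$; and Lemma~\ref{quadratic R1}(3) applied to $R_{n-1}\subsetneq R_n$ (with $R_n$ not local, having three maximal ideals) puts us in case (ii), so $R_n/M_{n-1}\cong F\times F\times F$ with $F:=R_{n-1}/M_{n-1}\cong\mathbb F_2$. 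Combining, $R/M\cong\mathbb F_2$, so $R=k+M$. Granting $n=1$, we are done: $S:=R_\infty=R_1$ has three maximal ideals, each principal by Proposition~\ref{properties}(4) and regular because each contains the nonzerodivisor $m$ of Proposition~\ref{properties}(1); each residue field is $\mathbb F_2$ since $S/M=R_1/M\cong\mathbb F_2^3$ by Lemma~\ref{quadratic R1}(3)(ii) and $M=\Jac S$ by Proposition~\ref{properties}(5) together with $n=1$; and $R=k+M=k+\Jac S$, which is (2).

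It remains to prove $n=1$, which I expect to be the crux. Suppose $n\ge2$ and fix $m$ with $M=mR_1$ as in Proposition~\ref{properties}(1). Proposition~\ref{properties}(2) together with $R=k+M$ yields $R_i=k+mR_{i+1}$ for $i<n$, so on iterating $R=\sum_{i=0}^{n-1}m^ik+m^nR_\infty$; the same kind of computation gives $m^nR_\infty\subseteq M\subseteq R$, so $m^nR_\infty$ is an ideal of both $R$ and $R_\infty$, and hence $\overline R:=R/m^nR_\infty\subseteq\overline S:=R_\infty/m^nR_\infty$ is quadratic by Remark~\ref{quad remark}(3). Now $\overline S$ is Artinian with $\overline S/\Jac\overline S\cong\mathbb F_2^3$, so it decomposes as $\overline S_1\times\overline S_2\times\overline S_3$ with each $\overline S_j\cong(R_\infty)_{N_j}/m^n(R_\infty)_{N_j}$ a local ring of length $n$, residue field $\mathbb F_2$, and maximal ideal generated by the image $m_j$ of $m$ (indeed $m$ generates the maximal ideal of $(R_\infty)_{N_j}$, the other $s_k$ being units there); in particular $1,m_j,\dots,m_j^{\,n-1}$ realize a composition series of $\overline S_j$. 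Moreover $\overline R=\sum_{i<n}(\text{image of }k)\,\overline m^{\,i}$ with $\overline m=(m_1,m_2,m_3)$, so for any $x\in\overline S$ the submodule $\overline Rx+\overline R$ is spanned over the image of $k$ by $\{\overline m^{\,i},\ \overline m^{\,i}x:0\le i<n\}$. Take $x=(1,m_2,0)$, so $x^2=(1,m_2^2,0)$. Were $x^2\in\overline Rx+\overline R$, comparing third components (where the $\overline m^{\,i}x$ contribute $0$ and $1,m_3,\dots,m_3^{\,n-1}$ are independent) forces the coefficients of the $\overline m^{\,i}$ to vanish; then comparing first components (using independence of $1,m_1,\dots,m_1^{\,n-1}$) forces the combination of the $\overline m^{\,i}x$ to be exactly $x$; but then the second component of $x^2$ would be $m_2$ rather than $m_2^2$ — impossible since $m_2\ne m_2^2$ when $n\ge2$. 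This contradicts quadraticity of $\overline R\subseteq\overline S$, hence of $R\subseteq R_\infty$, contradicting Definition~\ref{quadratic ring def}. Therefore $n=1$.
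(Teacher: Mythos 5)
Your (2)$\Rightarrow$(1) argument is correct and essentially the paper's, and in (1)$\Rightarrow$(2) the reduction of everything to the single claim $n=1$, together with the derivation $R/M\cong{\mathbb{F}}_2$ and $R=k+M$, also matches the paper. The gap is in your proof that $n=1$. You treat the coefficients $a_i,b_i$ of the expansions over $\{\overline m^{\,i},\overline m^{\,i}x\}$ as if they were scalars from ${\mathbb{F}}_2$, but they lie in the prime subring $k$, whose image in each factor $\overline S_j$ is generally larger than ${\mathbb{F}}_2$: since every residue field is ${\mathbb{F}}_2$, the element $2=1+1$ lies in $\Jac\overline S$, and its ``order of vanishing'' can differ from one component to another. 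Consequently ``$\sum_i b_i m_3^{\,i}=0$ forces the coefficients to vanish'' is false. Concretely, take $n=2$ with $\overline S_1={\mathbb{F}}_2[u]/(u^2)$, $\overline S_2={\mathbb{Z}}/4$, $\overline S_3={\mathbb{F}}_2[t]/(t^2)$, $m_1=u$, $m_2=2$, $m_3=t$, so that the image of $k$ is $\{(c,c,c):c\in{\mathbb{Z}}/4\}$ and $\overline R=k+k\overline m$ with $\overline m=(u,2,t)$. Here $s=2\cdot\overline m^{\,0}=(0,2,0)$ has third component $0$ but nonzero coefficient and nonzero second component; and your witness $x=(1,m_2,0)=(1,2,0)$ satisfies $x^2=(1,0,0)=x+2\in x\overline R+\overline R$, so it detects no failure of quadraticity at all. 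Your argument is therefore complete only when $R$ has characteristic $2$ (so that $k$ maps onto ${\mathbb{F}}_2$); in mixed or unequal characteristic it does not close, and one must either choose the test element much more carefully or rule out such configurations by other means.

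For comparison, the paper avoids coefficient bookkeeping entirely: after reducing to $R\subsetneq R_1\subsetneq R_2=R_\infty$ it first proves that $R_1/R$ is one-dimensional over $R/M$ (by analyzing an arbitrary intermediate module $A$ and its conductor-type ideal $N=mA_1$), deduces that $M/M^2$ has dimension at most $2$ and hence that $R_2/R_1\cong M_1^2/M^2$ has dimension at most $1$, and then gets a contradiction from the exact sequence $0\rightarrow R_1/M_1\rightarrow R_2/M_1\rightarrow R_2/R_1\rightarrow 0$, whose middle term has dimension $3$. That dimension count over $R/M\cong{\mathbb{F}}_2$ is insensitive to the arithmetic of the prime subring, which is exactly the point where your computation breaks down.
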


\begin{proof} (1) $\Rightarrow $ (2)   Suppose that $R$ is a quadratic ring and $R_\infty$ has three maximal ideals.  Since $R$ is local, $R \ne R_\infty$, and hence $R \ne R_1$.  
If $R_1 = R_\infty$, then,  
by Lemma~\ref{quadratic R1}(3), 
$M = \Jac R_1$ and the three maximal ideals of  $R_1$ each have residue field isomorphic to ${\mathbb{F}}_2$. Moreover, 
 by Proposition~\ref{properties}(4),  
each maximal ideal of $R_1 $ is regular and principal. 
Since $(k + M)/M \subseteq R/M$ and $R/M$ has only $2$ elements (because $R/M$ is a subfield of $R_1/M \cong  {\mathbb{F}}_2 \times {\mathbb{F}}_2 \times {\mathbb{F}}_2$), it must be that $k + M = R$.  
Therefore to verify (2) it remains to show that  $R_1 = R_\infty$ and choose $S = R_1$.

 Since $R$ is a quadratic
ring, each $R_i$ with $R_i \ne R_{i+1}$ is 
a quadratic ring by Proposition~\ref{properties}(2). Denote the maximal ideal of each such
$R_i$ by $M_i$.
 Since $R_\infty$ has $3$ maximal ideals, Proposition~\ref{properties}(3) implies that there is
 $n>0$ such that $R_{n-1} \subsetneq R_n = R_{\infty}$. We show that $n = 1$. 
Suppose by way of contradiction that $n>1$. 
%
By Proposition~\ref{properties}(2),
 $R_{n-2}$ is a quadratic ring with $R_{n-2} \subsetneq R_{n-1} \subsetneq R_{n} = R_\infty$, and, by Lemma~\ref{quadratic R1}(2), 
  $R_n/M_{n-1} \cong {\mathbb{F}}_2 \times {\mathbb{F}}_2 \times
{\mathbb{F}}_2$.   To show that such a case is impossible, we may after relabeling assume  that $n = 2$. We show the assumption that 
$R$ is a quadratic ring with $R \subsetneq R_1 \subsetneq R_2=R_\infty$ and the fact that $R_2/M_1 \cong {\mathbb{F}}_2 \times {\mathbb{F}}_2 \times
{\mathbb{F}}_2$ lead to a contradiction of the fact that $R \ne R_1$. 

\smallskip

{\textsc{Claim}}: {\it  The  dimension of the $R/M$-vector space $R_1/R$
is $1$.}  

\smallskip  Let $A$ be an $R$-module such that $R \subseteq A
\subsetneq R_1$. It suffices to show that $R = A$.
  Since $R \subseteq R_1$ is a quadratic
extension, $R \subseteq R_1$ is an integral extension and $A$ is a ring. Thus $A \subseteq R_1$ is an
integral extension. Since $R_1$ is local,  $A$ is
local with maximal ideal, say $N$, lying over $M$ and under
$M_1$.   By Proposition~\ref{properties}(1) and (2), there is a nonzerodivisor $m \in M$ such that  $M = mR_1$ and $M_1 = mR_2$.    Then $mR_1 = M \subseteq N \subseteq M_1 =
mR_2$, so that $R_1 \subseteq m^{-1}N \subseteq R_2$.  Since $R_1
\subseteq R_2$ is a quadratic extension, $m^{-1}N$ is a ring, and
hence $N = mA_1$, where $A_1 = \End(N) = m^{-1}N$. Moreover $R_1 \subseteq A_1
\subseteq R_2$.  To prove that $R = A$, we show that $R_1 = A_1$. Once this is established, we have that $A$ has maximal ideal $N = mA_1 = mR_1 =
M$. Since by Lemma~\ref{step one}(1), $R$ and $R_1$ have the same residue field $R/M$, so does $A$. Thus the  fact that $N = M$ 
 implies  that $A = R + N = R$,  as claimed. 

For the claim it remains to show that with $A$ and $A_1$ as above, $R_1 = A_1$.  Since $R_1/M_1 \subseteq A_1/M_1 \subseteq R_2/M_1 \cong {\mathbb{F}}_2 \times {\mathbb{F}}_2 \times
{\mathbb{F}}_2$, 
 and the last ring is reduced,  $A_1/M_1$ is also reduced. Hence $M_1 = \Jac A_1$. 
Since  
 every element of the ring
$ {\mathbb{F}}_2 \times {\mathbb{F}}_2 \times
{\mathbb{F}}_2$ is an idempotent, 
either $R_1 = A_1$ or 
 $A_1/M_1$ is a decomposable ring.  
 Suppose that $R_1 \ne A_1$. Then  $A_1/M_1$ is  a decomposable ring, and hence $A_1 $ 
has more than one maximal ideal. Since $R \subseteq R_\infty$ is a quadratic extension and $R \subseteq A\subseteq A_1 \subseteq R_\infty$, we have that $A \subseteq A_1$ is also a quadratic extension with $A_1$  not local.
By 
Lemma~\ref{quadratic R1}(3)  (applied to $A \subseteq A_1$), $N =
\Jac A_1.$ We have established above that $\Jac A_1 = M_1$, and so $N =  M_1$. Therefore, by 
Lemma~\ref{quadratic R1}(1), $R_1 = R + M_1 = R+ N \subseteq A$,
contrary to the choice of $A$ with $A \subsetneq R_1$. 
 This shows $A_1/M_1$ cannot be a decomposable ring, which forces $R_1 = A_1$ and  proves that $R_1/R$ has dimension $1$ as an $R/M$-vector space. This verifies the claim.

\smallskip

We show now that the claim implies a contradiction to our assumption that  $R \ne R_1$. Let $t \in R_1 \setminus R$. 
Since $R_1/R$ has dimension $1$, 
we have $R_1 = R + tR$, and hence $M = mR_1 = mR + mtR$, so that as an $R/M$-vector space,
$M/M^2$ has dimension at most $2$.  By
Lemma~\ref{quadratic R1}(4), $M_1^2 \subseteq M$. Also, $M_1^2/M^2$
is a proper subspace of $M/M^2$, and as such has dimension at most
$1$.  (We have $M_1^2 \subsetneq M$  since $M_1 =
mR_2$, $M= mR_1$  and $m$ is a nonzerodivisor.)  However, $M_1^2/M^2 = m^2R_2/m^2R_1
\cong R_2/R_1$, so that $R_2/R_1$  has dimension $1$ as an
$R/M$-vector space. There is an exact sequence of $R/M$-vector
spaces,
$$0 \rightarrow R_1/M_1 \rightarrow R_2/M_1 \rightarrow R_2/R_1
\rightarrow 0.$$ The left and right  vector spaces have
dimension $1$, while the middle vector space has dimension $3$,
since $R_2/M_1 \cong {\mathbb{F}}_2 \times {\mathbb{F}}_2 \times {\mathbb{F}}_2$.  This contradiction
implies that the assumption $R \subsetneq R_1 \subsetneq R_2$ is impossible under the conditions in (1). Therefore $R_1 = R_2$, from which it follows that $R_1 = R_\infty$. This completes the proof that (1) implies (2). 

(2) $\Rightarrow$ (1) 
  By (2),  $M=\Jac S$ is the product of the three
principal maximal ideals of $S$, so  $\Jac S$ is a principal regular ideal
of $S$. Hence $\Jac S$ is a stable maximal ideal of $R$.
Moreover $S = \End(M) = R_1 = R_\infty$ and $R_1/M \cong {\mathbb{F}}_2 \times {\mathbb{F}}_2 \times {\mathbb{F}}_2$. 
By Lemma~\ref{quadratic}, $R/M \subseteq R_1/M$ is  a quadratic extension, and hence $R \subseteq R_1=R_\infty$ is also a quadratic extension by Remark~\ref{quad remark}. Therefore  $R$ is a quadratic ring and $R_\infty$ has three maximal ideals. 
 \end{proof}

\section{Characterizations of one-dimensional stable rings}

In this section we work in  the following setting.

\begin{setting}\label{tilde}  {\em Let 
 $R$ be a one-dimensional local ring 
 having a regular maximal ideal
$M$, let $\overline{R}$ denote the  integral closure of $R$ in its total quotient ring $\QQ(R)$, and let  $m \in M$ be a nonzerodivisor. Let $J = \bigcap_{i>0}m^iR$. As in Notation~\ref{completion notation}, we denote by $\wt{R}$ and $\widehat{R}$ the completions of $R$ in the
$mR$-adic and $M$-adic topologies, respectively.  Both $J$ and $\wt{R}$ are independent of the choice of nonzerodivisor $m$ in $M$. Moreover, since $m$ is a nonzerdivisor,  $R/J$ is a one-dimensional ring with regular maximal ideal $M/J$.} 
\end{setting}

We first characterize  a one-dimensional  stable local ring in terms of its  integral closure.  
Recall that a ring $A$ is a {\it Dedekind ring} if every regular ideal $I$ of $A$ is invertible. 

\begin{theorem} \label{big} 
With Setting~\ref{tilde}, the following statements are equivalent.

\begin{enumerate}

\item[{\em (1)}] $R$ is a stable ring.

\item[{\em (2)}]  $R$ is a finitely stable ring with stable maximal ideal.



\item[{\em (3)}] $R$ is a quadratic ring such that
$\overline{R}$ has at most two maximal ideals.

\item[{\em (4)}] $R \subseteq \overline{R}$ is a quadratic 
extension and $\overline{R}$ is a {Dedekind  ring} with  at most two maximal ideals.

  \item[{\em (5)}] 
    $R/J$ is a stable ring.
    

\end{enumerate}


  \end{theorem}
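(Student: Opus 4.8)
The plan is to prove the cycle of implications $(1)\Rightarrow(2)\Rightarrow(3)\Rightarrow(4)\Rightarrow(1)$ together with the equivalence $(1)\Leftrightarrow(5)$, drawing on the machinery of quadratic rings developed in Section 3. The implication $(1)\Rightarrow(2)$ is immediate from the definitions: a stable ring has every regular ideal stable, so in particular every finitely generated regular ideal is stable (so $R$ is finitely stable) and the maximal ideal $M$, being regular, is stable. For $(2)\Rightarrow(3)$, I would invoke Theorem~\ref{pre-connection}: a finitely stable local ring with stable regular maximal ideal is a quadratic ring whose $R_\infty$ has at most two maximal ideals. Since Corollary~\ref{properties cor}(1) identifies $R_\infty$ with $\overline{R}$ for a one-dimensional quadratic ring, we get that $\overline{R}=R_\infty$ has at most two maximal ideals, which is exactly (3).

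**The core implication.** For $(3)\Rightarrow(4)$: from the definition of a quadratic ring, $R\subseteq R_\infty$ is a quadratic extension, and by Corollary~\ref{properties cor}(1), $R_\infty=\overline{R}$, so $R\subseteq\overline{R}$ is a quadratic extension. That $\overline{R}$ is a Dedekind ring is essentially the content of the argument in the proof of Corollary~\ref{properties cor}(1): $\overline{R}=R_\infty$ is a one-dimensional ring (integral over the one-dimensional ring $R$) whose maximal ideals, by Proposition~\ref{properties}(4), are all principal and regular; a one-dimensional ring with finitely many maximal ideals, all principal and regular, has every regular ideal invertible, hence is Dedekind. The ``at most two maximal ideals'' carries over verbatim from (3). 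For $(4)\Rightarrow(1)$: this is where I expect to lean on an already-stated external result, namely Rush's characterization as extended in this paper — the statement of Theorem~\ref{big} generalizes \cite[Theorem 2.4]{RushRep}, and the implication that $R$ is stable should follow by showing every regular ideal $I$ of $R$ is stable. The idea: given a regular ideal $I$, one shows $E(I)$ is a ring between $R$ and $\overline{R}$, hence (by the quadratic hypothesis together with $\overline{R}$ Dedekind) a ``nice'' ring over which $I$ becomes invertible; then Lemma~\ref{primary stable}(3) gives stability of $I$. Concretely I would argue: since every $R$-submodule of $\overline{R}$ containing $R$ is a ring (quadraticity), $IE(I)$ sits inside $\overline{R}$ in a controlled way, $\overline{R}$ being Dedekind forces $I\overline{R}$ to be invertible, and one descends invertibility to $E(I)$ by a localization/counting argument using that $\overline{R}$ has at most two maximal ideals.

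**The $J$-reduction.** For $(1)\Leftrightarrow(5)$: by Proposition~\ref{properties cor}(3) (which applies once we know $R$ is quadratic, i.e.\ once any of (1)--(4) holds — one has to be slightly careful about the logical order here), $R/J$ has integral closure $R_\infty/J$ and total quotient ring $\Q(R)/J$, and $J$ is an ideal of $\Q(R)$. The key point is that $J$ is a common ideal: regular ideals of $R$ containing $J$ correspond bijectively to regular ideals of $R/J$, with endomorphism rings and stability preserved under the quotient by $J$; and regular ideals of $R$ not containing $J$ — but every regular ideal contains a power $m^tR\supseteq J$? no, rather $J=\bigcap m^iR\subseteq m^tR$ so $J$ is contained in every principal regular ideal, hence in the radical-related structure — one shows that stability of a regular ideal $I$ is equivalent to stability of $(I+J)/J$, because $J$ behaves like a nilpotent-at-infinity piece that $E(I)$ absorbs. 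Thus $R$ stable $\iff$ every regular ideal of $R$ stable $\iff$ every regular ideal of $R/J$ stable $\iff R/J$ stable.

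**Main obstacle.** I expect the hardest step to be $(4)\Rightarrow(1)$ — upgrading from ``$R\subseteq\overline{R}$ quadratic and $\overline{R}$ Dedekind with $\le 2$ maximal ideals'' to full stability of \emph{every} regular ideal, not just finitely generated ones or the maximal ideal. The delicate part is controlling $E(I)$ and $I\,E(I)$ for a general (possibly non-finitely-generated) regular ideal $I$: one needs that $E(I)$ is sandwiched between $R$ and $\overline{R}$, that $I\overline{R}$ is invertible over $\overline{R}$, and then that invertibility descends — the ``at most two maximal ideals'' hypothesis is exactly what makes Lemma~\ref{primary stable} and the quadraticity (via Proposition~\ref{at most 3} bounding primes over $M$) combine to force $I$ to be a principal, hence invertible, ideal of $E(I)$. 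I would organize this as a short lemma: if $R\subseteq\overline{R}$ is quadratic, $\overline{R}$ Dedekind with $\le 2$ maximal ideals, and $I$ is a regular ideal, then $I=x\,E(I)$ for some $x\in I$, invoking Lemma~\ref{primary stable}(3),(5).
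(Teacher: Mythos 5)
Your proposal follows essentially the same route as the paper: the cycle $(1)\Rightarrow(2)\Rightarrow(3)\Rightarrow(4)\Rightarrow(1)$ via Theorem~\ref{pre-connection}, Corollary~\ref{properties cor}(1) and Proposition~\ref{properties}(4), with $(4)\Rightarrow(1)$ reduced to producing $x\in I$ with $I=xE(I)$, and $(1)\Leftrightarrow(5)$ handled by transferring stability of regular ideals (all of which contain $J$) across the quotient by $J$. The two steps you flag as delicate are exactly where the paper's proof does its work: for $(4)\Rightarrow(1)$ it chooses a generator of the principal ideal $I\overline{R}$ inside $I$ itself (Rush's argument, using the bound of two maximal ideals) so that $x^{-1}I$ is a ring by quadraticity and hence equals $E(I)$, and for $(5)\Rightarrow(1)$ it avoids the circular appeal to Corollary~\ref{properties cor}(3) by arguing directly that $I^2=xI+J$ forces $J\subseteq xI$, using $\Q(R)=R[1/m]$ and that $J$ is an ideal of $R[1/m]$.
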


  \begin{proof}  
(1) $\Rightarrow$ (2) This is clear.

(2) $\Rightarrow$ (3) This follows from  Lemma~\ref{Rush lemma} and Theorem~\ref{pre-connection}.

(3) $\Rightarrow$ (4)  By Corollary~\ref{properties cor}(1),
$\overline{R} = R_\infty$, and so $R \subseteq \overline{R}$ is a
quadratic  extension. By (3), $\overline{R}$ has at most 2
maximal ideals, and Proposition~\ref{properties}(4) implies the maximal ideals of
$\overline{R}$ are principal regular ideals. Since $\overline{R}$ has dimension one,  every regular ideal of $\overline{R}$ is a principal regular ideal, and hence $\overline{R}$ is a Dedekind ring.


(4) $\Rightarrow$ (1)
Let $I$ be a regular ideal of $R$.  Since $\overline{R}$ has Krull
 dimension $1$ and at most 2 maximal ideals, both of which are
 principal and regular (they are regular because they lie over $M$), it follows that
every regular ideal of $\overline{R}$ contains a power of  $\Jac
\overline{R}$.  
Since $\overline{R}$ is a Dedekind ring with only finitely many maximal ideals, every regular ideal is principal. In particular, $I\overline{R}$ is  a principal regular ideal; that is,
$I\overline{R} = x\overline{R}$ for some nonzerodivisor $x \in
\overline{R}$.
  An argument of Rush  \cite[proof of Theorem 2.2]{RushRep}, which is also given explicitly in \cite[Proposition 3.6]{OFS}, shows that since $\overline{R}$ has at most two maximal ideals, $x$    can be chosen  an element of
$I$. 
 Therefore  $R \subseteq x^{-1}I \subseteq \overline{R}$.
Since $R \subseteq \overline{R}$ is a quadratic extension, $x^{-1}I$
is a ring, and hence $I = x\End(I)$.  This shows that
every regular ideal of $R$ is stable.   

(1) $\Rightarrow$ (5) Let  $I$ be an ideal of $R$ such that $I/J$ is a regular ideal in $R/J$.  Then $I$ is an $M$-primary ideal $R$ and hence  a regular ideal in $R$. Since $R$ is stable, Lemma~\ref{primary stable}(6) implies that $I^2 = xI$, for some $x \in I$. 
Since $I^2$ is $M$-primary, we have  $J \subseteq I^2 \subseteq xR$, 
 and hence  $I^2/J = (I/J)(xR/J)$. Moreover, 
since $I^2 \subseteq xR$, the ideal $xR$ is $M$-primary, hence regular in $R$. Therefore  
 $xR/J$ is $M/J$-primary in $R/J$ and hence regular in $R/J$. Lemma~\ref{primary stable}(4) implies then that $I/J$ is a stable ideal in $R/J$, which proves that $R/J$ is a stable ring. 
 
(5) $\Rightarrow$ (1) Let $I$ be a regular ideal of $R$. 
Then $I$ is $M$-primary, and so there is $k>0$ such that $m^k \in I$. Since the image of $m^k$ in $R/J$ is a nonzerodivisor, $I/J$ is regular. By (5) and Lemma~\ref{primary stable}(6) there exists $x \in I$ such that $I^2/J = (xI + J)/J$.  Thus $I^2 = xI +J$, so that since $J$ is an ideal of $R[1/m]$, we have  
 $R[1/m] = I^2R[1/m] = xIR[1/m] + JR[1/m] = xR[1/m] + J$. Therefore there exist $\ell \geq 0$, $r \in R$ and $j \in J$ such that $1 = x(r/m^\ell) + j$. Thus $(1-j)m^\ell \in xR$, and, since $1-j$ is 
   a unit in $R$, $m^{\ell} \in xR$. Therefore $J \subseteq m^{k+\ell}R \subseteq xI$, so that $I^2 = xI + J = xI$. By Lemma~\ref{primary stable}(5), $I$ is stable. 
\end{proof}

\begin{remark} {\em Rush \cite[Theorem 2.4]{Rush2gen} has proved the equivalence of (1) and (4) of Theorem~\ref{big} in the case where  $R$ is a Noetherian ring. Our proof that (4) implies (1) follows Rush's argument.} \end{remark}

From the theorem we deduce a characterization of one-dimensional quadratic rings that shows  these are stable except in a very special case. 

 \begin{corollary} A one-dimensional local ring  $R$ is a   quadratic ring if and only if either $R$ is a stable ring or $R =   k + \Jac S$,  where $k$ is the prime subring of $R$ and $S$ is a Dedekind ring between $R$ and $\Q(R)$ with exactly three maximal ideals, 
  each with residue field isomorphic to  
${\mathbb{F}}_2$.
 
 \end{corollary}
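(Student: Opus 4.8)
The plan is to deduce this corollary from Theorem~\ref{big} together with Proposition~\ref{properties}(4) and Theorem~\ref{quad char}. First I would observe that $R_\infty = \overline{R}$ by Corollary~\ref{properties cor}(1), and that by Proposition~\ref{properties}(4) the ring $\overline{R} = R_\infty$ has at most three maximal ideals. This splits the ``only if'' direction into two cases according to whether $\overline{R}$ has at most two maximal ideals or exactly three.

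For the first case, if $R$ is a one-dimensional quadratic ring with $\overline{R}$ having at most two maximal ideals, then condition (3) of Theorem~\ref{big} holds, so $R$ is a stable ring. For the second case, if $\overline{R} = R_\infty$ has exactly three maximal ideals, then $R$ satisfies condition (1) of Theorem~\ref{quad char}, so by the equivalence (1) $\Leftrightarrow$ (2) there of that theorem there is a ring $S$ between $R$ and $\Q(R)$ with exactly three maximal ideals, each principal and regular with residue field $\mathbb{F}_2$, and $R = k + \Jac S$ for $k$ the prime subring of $R$; moreover $S = R_1 = R_\infty = \overline{R}$, and $\overline{R}$ being a one-dimensional integral extension with only finitely many maximal ideals, all principal and regular, is a Dedekind ring (as in the proof of (3) $\Rightarrow$ (4) of Theorem~\ref{big}). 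This gives exactly the second alternative in the statement.

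Conversely, for the ``if'' direction: if $R$ is a stable ring then $R$ is a quadratic ring by the implication (1) $\Rightarrow$ (3) of Theorem~\ref{big}. And if $R = k + \Jac S$ with $S$ a Dedekind ring between $R$ and $\Q(R)$ having exactly three maximal ideals each with residue field $\mathbb{F}_2$, then the hypotheses of Theorem~\ref{quad char}(2) are met (the maximal ideals of a Dedekind ring with finitely many maximal ideals are principal and regular), so by (2) $\Rightarrow$ (1) of that theorem $R$ is a quadratic ring.

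I do not expect a serious obstacle here: the corollary is essentially a bookkeeping assembly of Theorem~\ref{big}, Proposition~\ref{properties}(4), and Theorem~\ref{quad char}. The only point needing a little care is checking that in the second alternative the ring $S$ really is forced to equal $\overline{R}$ and is Dedekind — but this is already contained in the proof of Theorem~\ref{quad char} (where $S = R_1 = R_\infty$) and in the argument for (3) $\Rightarrow$ (4) of Theorem~\ref{big}, so it can simply be cited. One should also note the two alternatives are genuinely mutually exclusive — a stable ring has $\overline{R}$ with at most two maximal ideals by Theorem~\ref{big}(3), while the second type has three — though the statement is phrased with an inclusive ``either...or'' and this need not be belabored.
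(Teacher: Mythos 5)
Your proof is correct and follows essentially the same route as the paper's: both reduce to the dichotomy on the number of maximal ideals of $\overline{R}=R_\infty$ via Corollary~\ref{properties cor}(1) and Proposition~\ref{properties}(4), handle the two-or-fewer case with Theorem~\ref{big} and the three-maximal-ideal case with Theorem~\ref{quad char}, and reverse the same citations for the converse. No issues.
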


 \begin{proof} Suppose that $R$ is a quadratic ring that is not a stable ring. 
   By 
      Theorem~\ref{big}, $\overline{R}$ has more than two maximal ideals. By Corollary~\ref{properties cor}(1), $\overline{R} = R_\infty$, and so, by Proposition~\ref{properties}(4), $R_\infty$ has exactly three maximal ideals. 
      By Theorem~\ref{quad char}, 
 there exists a ring $S$ between $R$ and $\Q(R)$    having exactly 3 maximal ideals,
all of which are principal and have residue field isomorphic to
${\mathbb{F}}_2$, such that $R= k + \Jac S$. 
As noted in the proof  of Theorem~\ref{quad char}, we may take $S$ to be $R_\infty$. 
Since the maximal ideals of the one-dimensional ring $\overline{R}=R_\infty$ are principal and regular,  $\overline{R}$ is a Dedekind ring. 
The converse is a consequence of Theorems~\ref{pre-connection} and~\ref{quad char}.  
 \end{proof}






In Theorem~\ref{new Bass}  we characterize the one-dimensional  stable local rings with finite integral closure; Theorem~\ref{new infinite} similarly characterizes  stable rings without finite integral closure.  
\begin{theorem} \label{new Bass} 
With Setting~\ref{tilde}, the following statements are equivalent. 
\begin{enumerate}
\item[{\em (1)}] $R$ is a  stable ring with finite integral closure.

\item[{\em (2)}]  $R/J$ is a Bass ring.

\item[{\em (3)}]  $\wt{R}$ is a Bass ring.
 
 \end{enumerate}
\end{theorem}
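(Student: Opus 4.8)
The plan is to prove the cycle $(1) \Rightarrow (2) \Rightarrow (3) \Rightarrow (1)$, using Theorem~\ref{big} to move between stability of $R$ and stability of $R/J$, and Proposition~\ref{2.6}--\ref{2.8} (the Matlis completion results) to move between $R$ and $\wt{R}$. First I would record the key preliminary observation that $\wt{R}$ has regular maximal ideal $M\wt{R}$ and that, since $J = \bigcap_i m^i R$ and $\wt{R}$ is the $mR$-adic completion, the completion kills $J$: concretely, $R/J \to \wt{R}/J\wt{R}$ and the natural map $R/J \to \widetilde{(R/J)}$ should both be understood, and in fact $\wt{R}$ is the $m\wt{R}$-adic completion of the one-dimensional local ring $R/J$ (whose $\bigcap_i m^i(R/J) = 0$), making $\wt{R}$ a complete, hence Noetherian-friendly, object once $M$ is suitably finitely generated.

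For $(1) \Rightarrow (2)$: if $R$ is stable with finite integral closure, then by Theorem~\ref{big}(5) $R/J$ is stable, and I must show $R/J$ is a Bass ring, i.e.\ reduced Noetherian local with finite normalization and every ideal two-generated. Finiteness of $\overline{R}$ over $R$ should pass to finiteness of the normalization of $R/J$ over $R/J$ via Corollary~\ref{properties cor}(3), which identifies $\overline{R/J}$ with $R_\infty/J$; since $R$ is a quadratic ring (Theorem~\ref{big}(3)) with $R_\infty = \overline{R}$ finitely generated, Corollary~\ref{properties cor}(2) gives $R_\infty = R_n$, and this finiteness descends modulo $J$. Reducedness of $R/J$ follows because $R_\infty/J$ is a Dedekind ring (it is $\overline{R}/J$, integrally closed of dimension one with principal regular maximal ideals) that is module-finite over $R/J$, so $R/J$ is reduced by Proposition~\ref{2.2}(2) once we know $R/J$ is Noetherian — and Noetherianity comes from Proposition~\ref{2.2}(1) together with the fact that $M/J$ contains a power inside a principal regular ideal (since $M^2 = mM$ by Proposition~\ref{properties}(1)). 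Finally, "every ideal two-generated" for the stable reduced Noetherian $R/J$ is exactly Theorem~\ref{Bass thm} (the Bass/Drozd--Kiri\v{c}enko/Sally--Vasconcelos direction).

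For $(2) \Rightarrow (3)$: assuming $R/J$ is a Bass ring, $R/J$ is Noetherian with regular maximal ideal, so by Remark~\ref{new remark}(i) its $mR$-adic and $M$-adic completions coincide; thus $\wt{R}$ is the $M/J$-adic completion of a Bass ring. Bass rings are analytically unramified (finite normalization plus Noetherian plus reduced, cf.\ Proposition~\ref{2.3}(3) read in reverse via Proposition~\ref{2.2}(3)), so $\wt{R}$ is again reduced, Noetherian, local, one-dimensional, and one must check its normalization is module-finite and its ideals are two-generated. The two-generation passes to the completion because lengths and minimal numbers of generators are preserved under $M$-adic completion (faithful flatness), and finite normalization passes up because completion commutes with normalization for analytically unramified excellent-enough rings — here more simply because $\overline{R/J}$ is module-finite and the completion of a module-finite extension is module-finite; then $\wt{R}$ is a Bass ring by definition. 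For $(3) \Rightarrow (1)$: if $\wt{R}$ is a Bass ring, then in particular $\wt{R}$ is stable (Theorem~\ref{Bass thm} again), and every regular ideal $I$ of $R$ satisfies $I = \lambda^{-1}(I\wt{R})$ with $R/I \cong \wt{R}/I\wt{R}$ by Proposition~\ref{2.8}(3); using Lemma~\ref{primary stable}(6) applied to the stable ring $\wt{R}$ one gets $(I\wt{R})^2 = x(I\wt{R})$ for some $x \in I\wt{R}$, and a descent argument along the lines of the $(5)\Rightarrow(1)$ step of Theorem~\ref{big} (writing $\wt{R} = \lambda(R) + x\wt{R}$ via Proposition~\ref{2.7}, clearing denominators, using that $1 - j$ is a unit) shows $I^2 = xI$ back in $R$, so $I$ is stable by Lemma~\ref{primary stable}(5); finiteness of $\overline{R}$ over $R$ then follows since $R$ is a quadratic ring with $\overline{R} = R_\infty$ and the Bass property of $\wt{R}$ forces $R_\infty = R_n$ via the completion-faithful-flatness comparison.

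The main obstacle I anticipate is the $(2) \Leftrightarrow (3)$ comparison, specifically verifying that the module-finiteness of the normalization and the two-generated property are genuinely preserved under passing to the $M$-adic completion without quietly assuming excellence: I would want to lean on Proposition~\ref{2.3}(3) (finite integral closure $\iff$ analytically unramified) to certify that $\wt{R}$ is reduced and that $\overline{\wt{R}}$ is module-finite, and on the flatness of $R/J \to \wt{R}$ plus the isomorphism $R/I \cong \wt{R}/I\wt{R}$ to transport the numerical two-generation condition in both directions; the descent in $(3)\Rightarrow(1)$ is then a routine reprise of the argument already used for $(5)\Rightarrow(1)$ in Theorem~\ref{big}.
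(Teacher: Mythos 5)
Your overall architecture (cycle through $(1)\Rightarrow(2)\Rightarrow(3)\Rightarrow(1)$, using Theorem~\ref{big} and the Matlis completion facts) matches the paper, but two steps as written do not go through. In $(1)\Rightarrow(2)$ your derivations of reducedness and Noetherianity of $R/J$ are circular: you get reducedness from Proposition~\ref{2.2}(2), which requires $R/J$ to be Noetherian, and you get Noetherianity from Proposition~\ref{2.2}(1), which requires $R/J$ to be reduced \emph{and} to have a finitely generated power of its maximal ideal. Moreover the hypothesis of Proposition~\ref{2.2}(1) is not verified by your observation that $M^2=mM\subseteq mR$: containment of $M^2$ in a principal ideal does not make $M^2$ finitely generated (indeed $M^2=mM$ is finitely generated exactly when $M$ is, which is not known at this stage). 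The paper breaks the circle differently: reducedness of $R/J$ comes from Proposition~\ref{properties}(7), which identifies $J$ with the intersection of the minimal primes (the nilradical) once $R_n=R_\infty$; and Noetherianity comes from the Eakin--Nagata theorem applied to the module-finite extension $R_\infty/J$ of $R/J$, which is a finite product of PIDs and hence a principal ideal ring. Neither of these inputs appears in your sketch, and without them the step fails.

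The second gap is at the end of $(3)\Rightarrow(1)$: after descending stability from $\wt{R}$ to $R$ you assert that the Bass property of $\wt{R}$ ``forces $R_\infty=R_n$ via the completion-faithful-flatness comparison,'' but no argument is given, and faithful flatness of $R\to\wt{R}$ by itself does not detect finite generation of the integral closure (which lives in $\Q(R)$, not in $\wt{R}$). The paper avoids this by proving $(3)\Rightarrow(2)$ instead: reducing to $J=0$, it descends finite generation of $M$ from $\wt{R}$ using $\wt{R}=\lambda(R)+m\wt{R}$ and $M/mR\cong M\wt{R}/m\wt{R}$, concludes $R$ is Noetherian and analytically unramified of multiplicity at most $2$, hence a Bass ring; then $(2)\Rightarrow(1)$ recovers finite integral closure from Corollary~\ref{properties cor}(3). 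A correct repair of your direct route would be to invoke Theorem~\ref{new infinite}$(1)\Rightarrow(3)$: if $R$ were stable without finite integral closure, $\wt{R}$ would have a nonzero square-zero prime, contradicting reducedness of the Bass ring $\wt{R}$ --- but as written your step is an assertion, not a proof.
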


\begin{proof} 
(1) $\Rightarrow$ (2) 
By Theorem~\ref{big}, $R/J$ is a stable ring.
 Since also $R$ has finite integral closure, 
 Corollary~\ref{properties cor}(2) implies that
  $R_n = R_\infty$ for some $n$. By Proposition~\ref{properties}(7), $J$ is the nilradical of $R$. Therefore $R/J$ is a reduced one-dimensional stable ring. 
%
  To show that $R/J$ is a Bass ring, it remains   by Theorem~\ref{Bass thm} 
 to show that  $R/J$ is a Noetherian ring with finite integral closure.
 By Proposition~\ref{properties}(7), since $R_n = R_\infty$, $J$ is the intersection of the minimal prime ideals $Q_1,\ldots,Q_r$  of $R_\infty$, and, as noted in the proof of the proposition, these prime ideals are comaximal.  
    Now $R_\infty$ is a one-dimensional ring with principal maximal ideals (Proposition~\ref{properties}(4)), and so for each $i$, $R_\infty/Q_i$ 
is a principal ideal domain.  Since  $Q_1,\ldots,Q_r$ are pairwise comaximal, the ring   $R_\infty/J$ is the product of the principal ideal domains $R_\infty/Q_i$.  Therefore 
 $R_\infty/J$ is a
principal ideal ring.
By Proposition~\ref{properties}(6), 
$R_\infty/J$ is a finitely generated  $R/J$-module, and so 
 the Eakin-Nagata
Theorem \cite[Theorem 3.7, p.~18]{Ma} implies  that $R/J$ is a Noetherian ring. Moreover, by Corollary~\ref{properties cor}(3), $R_\infty/J$ is the integral closure of $R/J$ in its total quotient ring, and so $R/J$ has finite integral closure. Therefore $R/J$ is a Bass ring. 

 (2) $\Rightarrow$ (3)  Let $M$ denote the maximal ideal of $R$. 
Since $S:=R/J$ is reduced and has finite normalization, Proposition~\ref{2.5}(3) implies that 
 the $M/J$-adic completion 
$\widehat{S}$ of $S$ is also reduced, and hence, again by Proposition~\ref{2.5}(3), $\widehat{S}$  has finite normalization. Since $S$ has multiplicity $\leq 2$, so does $\widehat{S}$. Thus $\widehat{S}$ is a Bass ring. To show then that $\wt{R}$ is a Bass ring, we need only prove that $\wt{R} \cong \widehat{S}$.  
 Since $mR/J$ contains a power of  $M/J$,  the $(mR/J)$-adic completion of $S= R/J$ coincides with the $(M/J)$-adic completion $\widehat{S}$ of $S$. 
For each $i >0$,  $J \subseteq m^iR$,  so that   $S/m^iS \cong R/m^iR$ as $R$-algebras. Since these isomorphisms are natural,  $$\widehat{S} = \varprojlim S/m^iS \cong \varprojlim R/m^iR = \wt{R},$$ 
which completes the proof of (3).



(3) $\Rightarrow$ (2)  As noted in the proof of (2) implies (3), the $mR/J$-adic completion of $R/J$ coincides with $\wt{R}$, so we may assume without loss of generality that $J = 0$ and show that $R$ is  a Bass ring. 
Since $J = 0$, we may identify  $R$ with its image in $\wt{R}$. Also, since  
 $\wt{R}$ is a reduced ring,  so is $R$.

We claim  that $R$ is Noetherian. Since $R$ is reduced and  has Krull dimension $1$, it suffices by Proposition~\ref{2.2}(1) to show that $M$ is finitely generated. 
%
To this end, observe that since $\wt{R}$ is a Noetherian ring,  the $\wt{R}$-module $M\wt{R}/m\wt{R}$ is finitely generated. 
 By Proposition~\ref{2.7}(2), $\wt{R} = R + m\wt{R}$, and so $M\wt{R}/m\wt{R}$ is finitely generated  as an $R$-module. By Proposition~\ref{2.8}(3),  
$M/mR \cong M\wt{R}/m\wt{R}$ as $R$-modules,  and hence $M/mR$ is a finitely generated $R$-module. We conclude that $M$ is a finitely generated ideal of $R$ and hence that $R$ is  a Noetherian ring.

Since $R$ is Noetherian,  the $M$-adic and $mR$-adic topologies agree on $R$. Therefore the $M$-adic completion $\widehat{R}$ of the  Noetherian ring $R$ is a Bass ring. In particular,   $R$ is an analytically unramified  Noetherian local ring.  Since $\widehat{R}$ has multiplicity $\leq 2$, so does $R$, which proves that $R$ is a Bass ring. 

(2) $\Rightarrow$ (1) By Theorem~\ref{big}, $R$ is  stable ring, and by Corollary~\ref{properties cor}(3), $R_\infty/J$ is the integral closure of $R/J$. Since $R/J$ is a Bass ring, $R_\infty/J$ is a finitely generated $R$-module. Hence $R_\infty$ is a finitely generated $R$-module, so that   $R$ has finite integral closure  by Corollary~\ref{properties cor}(1).  
\end{proof}

A simple example shows that the conditions of the theorem are not in general equivalent to the condition that $R$ is a Bass ring. 
The example, as well as later examples in this section, are constructed  using Nagata idealization, which was discussed in Remark~\ref{idealization remark}. 
%

\begin{example} {\em Let $V$ be a DVR with quotient field $F$. Then   $R= V \star F$ is a one-dimensional  stable local ring with regular maximal ideal \cite[Lemma 3.3]{GFF}. The ring $R$ has 
 finite integral closure (it is in fact integrally closed), and $R$ is not a Bass ring because it is not reduced, nor even Noetherian.} 
\end{example}

We turn next to the characterization of stable rings without finite integral closure.

\begin{theorem} \label{new infinite} 
With Setting~\ref{tilde},
the following statements are equivalent. 

\begin{enumerate} 

\item[{\em (1)}] $R$ is a stable ring without finite integral closure.

\item[{\em (2)}] $R/J$ is a stable ring without finite integral closure.

\item[{\em (3)}] There exists a nonzero prime ideal $P$ of $\wt{R}$ such that $P^2 = 0$ and $\wt{R}/P$ is a DVR.
\end{enumerate}

\end{theorem}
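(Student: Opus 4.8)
The plan is to use Theorem~\ref{new Bass} to strip away the finite-integral-closure case, so that stable rings split cleanly into those of Theorem~\ref{new Bass} and those of the present theorem, and then to reduce to the reduced case via the radical $J$ and transfer everything to the completion $\wt R$ using Setting~\ref{tilde} and the properties collected in Proposition~\ref{properties} and Corollary~\ref{properties cor}. Concretely, I would prove $(1)\Leftrightarrow(2)$ first, then $(2)\Leftrightarrow(3)$, routing the second equivalence through Theorem~\ref{GFF theorem}(3), which already characterizes bad stable domains by exactly condition (3) of the present theorem applied to the domain $R/J$.

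\medskip

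\emph{$(1)\Leftrightarrow(2)$.} By Theorem~\ref{big}, $R$ is stable if and only if $R/J$ is stable, so the content is that $R$ has finite integral closure if and only if $R/J$ does. This is immediate from Corollary~\ref{properties cor}: part (1) gives $\overline R = R_\infty$ and part (3) gives that $R_\infty/J$ is the integral closure of $R/J$, while Proposition~\ref{properties}(6) says $R_\infty$ is a finitely generated $R$-module exactly when $n<\infty$, which by Corollary~\ref{properties cor}(2) is equivalent to $R_\infty/J$ being finitely generated over $R/J$. (One also needs that a quadratic ring structure is present, which follows from Theorem~\ref{big}(3) applied via (1).)

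\medskip

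\emph{$(2)\Leftrightarrow(3)$.} First I would argue that $R/J$, being a reduced one-dimensional stable ring \emph{without} finite normalization, must in fact be a domain. The key point: by Theorem~\ref{big}, $\overline{R/J}$ is a Dedekind ring with at most two maximal ideals; by Proposition~\ref{properties}(7) the number of minimal primes of $R$ equals the number of maximal ideals of $R_\infty=\overline R$, and if there were two minimal primes then $\overline{R/J}$ would decompose as a product of two one-dimensional pieces, each finitely generated over its piece of $R/J$ (the comaximal case, as in the proof of Theorem~\ref{new Bass}, $(1)\Rightarrow(2)$), contradicting the failure of finite normalization. So $R/J$ is a one-dimensional stable local domain without finite normalization, i.e.\ a bad stable domain in the sense of the definition in Section 2.4. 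Now the $mR$-adic completion of $R/J$ equals $\wt R$ — this is exactly the natural isomorphism $\wt R\cong\varprojlim (R/J)/m^i(R/J)$ established in the proof of Theorem~\ref{new Bass}, $(2)\Rightarrow(3)$, using $J=\bigcap_i m^iR\subseteq m^iR$ and Proposition~\ref{2.8}(3). Therefore applying Theorem~\ref{GFF theorem}, equivalence $(1)\Leftrightarrow(3)$ there, to the domain $R/J$ with its completion $\wt R$ yields precisely: $R/J$ is a bad stable domain if and only if there is a nonzero prime $P$ of $\wt R$ with $P^2=0$ and $\wt R/P$ a DVR. Conversely, condition (3) forces $\wt R$ reduced modulo nothing useful directly, so to get from (3) back to (2) I would run Theorem~\ref{GFF theorem} in the reverse direction to conclude $R/J$ is a bad stable domain, hence stable without finite normalization.

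\medskip

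The step I expect to be the main obstacle is pinning down that $R/J$ is a \emph{domain} (not merely reduced) once finite normalization fails, since this is where the structural input from Section 3 — specifically Proposition~\ref{properties}(7) on the correspondence between minimal primes of $R$ and maximal ideals of $R_\infty$, together with the comaximality argument from the proof of Theorem~\ref{new Bass} — has to be assembled carefully; everything after that is a citation of Theorem~\ref{GFF theorem}(3) plus the completion identification already in hand.
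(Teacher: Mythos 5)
Your direction $(1)\Leftrightarrow(2)$ is fine and is essentially the paper's argument. The problem is $(2)\Leftrightarrow(3)$: it rests on the claim that $R/J$ is reduced (and then a domain), and that claim is false. Condition (2) does not force $R/J$ to be reduced: the nilradical of $R$ need not be contained in $J=\bigcap_i m^iR$. Example~\ref{power example} in the paper is a direct counterexample: $R=k[X]_{(X)}\star k[[X]]$ is a stable ring without finite integral closure with $J=\bigcap_i M^i=0$, yet $R=R/J$ has the nonzero square-zero ideal $0\star k[[X]]$. The same happens for $R=A\star(A/M)$ with $A$ a bad Noetherian stable domain. (The paper's Corollary~\ref{reduced cor}, which does conclude ``domain,'' assumes reducedness of $R$ as a hypothesis; it is not available here.) Since Theorem~\ref{GFF theorem} is a statement about local \emph{domains}, you cannot invoke its equivalence $(1)\Leftrightarrow(3)$ for $R/J$, in either direction; in particular your route from (3) back to (2) presupposes exactly what needs to be proved. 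Your subsidiary argument that two minimal primes would force finite normalization also quietly uses Proposition~\ref{properties}(7), whose hypothesis $n<\infty$ fails in the infinite-closure case.

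What is actually needed for $(2)\Rightarrow(3)$ is a construction that works in the non-reduced case. The paper reduces to $J=0$, sets $S=R_\infty$ (local with maximal ideal $xS$ and $S=R+xS$ by Proposition~\ref{properties}(3)), and produces a surjective ring homomorphism $\wt{R}\to\wt{S}$ by showing $\Ext^1_R(Q/R,S/R)=0$ from the divisibility of $S/R$ and the countable generation of $Q=R[1/m]$. Then $P$ is taken to be the kernel of the induced map onto the DVR $\wt{S}/\bigcap_k x^k\wt{S}$, and $P^2=0$ is checked by hand using the estimate $(m^iS\cap R)^2\subseteq m^iR$ coming from the quadratic extension $R\subseteq S$. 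For $(3)\Rightarrow(1)$ one first notes $\wt{R}$ is stable (a square-zero prime with DVR quotient gives this), and then descends stability from $\wt{R}$ to $R$ via a lifting argument for principal generators of ideals $I\wt{R}$, using $\wt{R}=\lambda(R)+x\wt{R}$; finiteness of the integral closure is then excluded by Theorem~\ref{new Bass}. None of this is recoverable from Theorem~\ref{GFF theorem} alone.
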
 

\begin{proof}  
(1) $\Rightarrow$ (2)  Suppose that $R$ is a stable ring without finite integral closure. By Theorem~\ref{big}, $R/J$ is a stable ring. By Corollary~\ref{properties cor}(1) and (3), $\overline{R}/J$ is the integral closure of $R/J$. 
Thus, since  $R$ does not have  finite integral closure, neither does $R/J$.  


(2) $\Rightarrow$ (3)  As observed in the proof of (2) implies  (3) of 
Theorem~\ref{new Bass}, 
 the $(mR/J)$-adic completion of $R/J$ coincides with $\wt{R}$. We assume without loss of generality that  $J = 0$ and $R$ is a stable ring without finite integral closure. Corollary~\ref{properties cor}(2) implies that $R_i \ne R_\infty$, for all $i >0$. By Proposition~\ref{properties}(3) there exists a nonzerodivisor $x \in M$ such that $xR_\infty$ is the maximal ideal of $R_\infty$ and $R_\infty = R + xR_\infty$.  To simplify  notation, let $S = R_\infty$ and $Q = Q(R)$.

\smallskip

{\textsc{Claim 1}:}  {\it The inclusion mapping $R \rightarrow S$ lifts to a surjective ring homomorphism $\wt{R} \rightarrow \wt{S}$, where $\wt{S}$ denotes the completion of $S$ in the $mR$-adic completion of $S$.  }

\smallskip
   
  Consider the exact sequence $$\Hom_R(Q/R,Q/R) \stackrel{\delta}{\rightarrow}  \Hom_R(Q/R,Q/S) \rightarrow \Ext_R^1(Q/R,S/R).$$  
  Since $x$ is a nonzerodivisor  in $R$, the ideal $xR$ is $M$-primary. Since $R$ has Krull dimension one, this implies that some power of $x$ is contained in every principal regular ideal of $R$.  Consequently, since 
   $S = R+ xS$,  it follows that $S = R + rS$ for all nonzerodivisors $r \in R$; that is, $S/R$ is divisible with respect to the set of nonzerodivisors of $R$. This property, along with the fact that $Q=R[1/m]$ is a countably generated $R$-module, implies that $\Ext_R^1(Q/R,S/R) = 0$ \cite[Theorem 1.1]{AHT}, and hence the map $\delta$ is a surjection.  
 By \cite[Theorem 2.2, p.~13]{Matlis},  $\Hom_R(Q/R,Q/R)$ can be identified with $\wt{R}$, while $\Hom_R(Q/R,Q/S)$ can be identified with $\wt{S}$.  Since these identifications are natural,  $\wt{R}$ maps onto $\wt{S}$. As discussed in the proof of \cite[Theorem 2.9, p.~21]{Matlis}, this mapping is a ring homomorphism.  
 
 \smallskip
 
 {\textsc{Claim 2}:}  {\it There is a prime ideal $P$ of $\wt{R}$ such that $\wt{R}/P$ is a DVR.} 
 
 \smallskip
 
Since $x$ is a nonzerodivisor in $S$, Proposition~\ref{2.6}(1)  implies that $x$ is also a nonzerodivisor in $\wt{S}$. 
Since $S$ has principal maximal 
ideal $xS$,  $\wt{S}$ has  principal maximal ideal 
 $x\wt{S}$. Thus, since $x$ is a nonzerodivisor in $\wt{S}$, the ideal $L = \bigcap_{k}x^k\wt{S}$ has residue ring $\wt{S}/L$ that is  a DVR. Since Claim 1 implies that $\wt{R}$ maps onto $\wt{S}/L$, there is a prime ideal $P$ of $\wt{R}$ such that $\wt{R}/P$ is a DVR. 
 \smallskip

 {\textsc{Claim 3}:} {\it The ideal $P$ in Claim 2 has the property that $P^2 = 0$.}  
 
 \smallskip
 
 The ideal $P$ in Claim 2  
  is the kernel of the induced map $\wt{R} \rightarrow \wt{S}/\bigcap_{k}x^k\wt{S}$. With $m$ as in Setting~\ref{tilde}, 
since $mR$ is $M$-primary, $\bigcap_{k}x^k\wt{S} = \bigcap_{k} m^k\wt{S}$. Viewing $\wt{R}$ and $\wt{S}$ as subrings of $\prod_{i >0} R/m^iR$ and $\prod_{i>0} S/m^iS$, respectively, we have 
$$P = \{\l r_i + m^iR\r \in \wt{R}: \l r_i + m^iS \r \in \bigcap_{k} m^k\wt{S}\}.$$  
Let $p,q \in P$, and write $p = \l a_i + m^iR \r$ and $q = \l b_i + m^iR \r$.  Since $p,q \in P$, there exist $c_i,d_i, s_i,t_i \in S$ such that  $a_i = m^{i+1}c_i + m^is_i$ and $b_i = m^{i+1}d_i + m^it_i$.  Thus $a_i, b_i \in m^iS \cap R$. 
Since $S = R+mS$, $R \subseteq S \subseteq R[1/m]$ and $R \subseteq S$ is a quadratic extension, it follows that 
 $(m^iS \cap
R)^2 \subseteq m^iR$ for all $i>0$ \cite[Lemma 3.2]{GFF}.
Therefore $a_ib_i \in m^iR$ for all $i>0$, which shows that $pq = 0$, and hence $P^2 = 0$.  

(3) $\Rightarrow$ (1)  Since $\wt{R}/P$ is a DVR and $P^2 = 0$,  $\wt{R}$ is a one-dimensional stable ring \cite[Lemma 3.3]{GFF}. 
We claim that this implies  $R$ is a stable ring. To this end,
 we first make an observation about the canonical homomorphism   $\lambda:R \rightarrow \wt{R}$. 
 
 \smallskip
 
 {\textsc{Claim:}} {\it 
 If $z \in \wt{R}$ is such that
 $\lambda^{-1}(z\wt{R})$ is a regular ideal of $R$, then 
 the principal ideal $z\wt{R}$ is generated by the image of an element of $\lambda^{-1}(z\wt{R})$.}
 
 \smallskip  
 

Let $z \in \wt{R}$ be such that $\lambda^{-1}(z\wt{R})$ is a regular ideal of $R$, and let 
 $x$ be a nonzerodivisor in
 $\lambda^{-1}(z\wt{R})$. Write $\lambda(x) = zw$ for some $w \in
 \wt{R}$.  If $w$ is a unit in $\wt{R}$, then $x\wt{R} = z\wt{R}$,
 as claimed.  Suppose that $w$ is not a unit in $\wt{R}$.
   Since $x$ is a nonzerodivisor in $R$, Proposition~\ref{2.7}(2) gives that   $\wt{R} = \lambda(R) + x\wt{R}$. 
  Therefore $z \in \lambda(R) +
 x\wt{R} = \lambda(R) +  zw\wt{R}$, and so, for some $t \in
 \wt{R}$ and $y \in R$, $z(1-wt) = z - zwt =
 \lambda(y)$.  Since $w$ is not a unit and  $\wt{R}$ is
 a local ring,  $1 -wt$ is a unit in $\wt{R}$.
 Since $z(1-wt) = \lambda(y)$, we have $z\wt{R} = y\wt{R}$. This shows  that in all cases  $z\wt{R}$ is generated as a principal ideal by the image of an element of $R$. This proves the claim.

\smallskip

We use the claim now to  verify that $R$ is stable. Let
 $I$ be a regular ideal of $R$.
 Since  $\wt{R}$ is a  stable local
ring,  Lemma~\ref{primary stable}(6) implies that there exists $z \in
I\wt{R}$ such that $I^2\wt{R} = zI\wt{R}$. Since $I^2\wt{R}
\subseteq z\wt{R}$, we have $I^2 \subseteq \lambda^{-1}(z\wt{R})$.  Since $I^2$ is a regular
ideal of $R$,  the claim  implies that there
exists $x \in \lambda^{-1}(z\wt{R})$
 such that $z\wt{R} = x\wt{R}$. By Proposition~\ref{2.8}(3), $
 \lambda^{-1}(I\wt{R})  = I$, and so $x \in \lambda^{-1}(z\wt{R}) \subseteq \lambda^{-1}(I\wt{R}) =  I$. 
Furthermore, 
 $I^2\wt{R} = xI\wt{R}$, so again by Proposition~\ref{2.8}(3), $I^2 = \lambda^{-1}(I^2\wt{R}) = \lambda^{-1}(xI\wt{R}) =  xI$.  By Lemma~\ref{primary stable}(4),  $I$ is a stable ideal of $R$, which proves that $R$ is  stable. 
Finally,  if $R$ has finite integral closure, then, by Theorem~\ref{new Bass}, $\wt{R}$ is a Bass ring. In particular $\wt{R}$ is reduced, contrary to (3). Thus $R$ does not have finite integral closure.  
\end{proof}





In the case where $R$ is separated in the $mR$-adic topology (e.g., if $R$ is Noetherian), we obtain the following classification of one-dimensional  stable local rings. 

\begin{corollary} \label{previous}
With Setting~\ref{tilde}
 and $J = 0$, the ring $R$ is  stable  if and only if $R$ is a Bass ring or there is a nonzero prime ideal $P$ of $\wt{R}$ such that $\wt{R}/P$ is a DVR and $P^2 = 0$. 
\end{corollary}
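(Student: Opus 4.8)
The plan is to read this off directly from Theorems~\ref{new Bass} and~\ref{new infinite}, using the standing hypothesis $J = 0$ to identify $R/J$ with $R$. For the forward implication I would split into cases according to whether $R$ has finite integral closure. If $R$ is stable and has finite integral closure, then Theorem~\ref{new Bass} gives that $R/J$ is a Bass ring, and since $J = 0$ this says precisely that $R$ is a Bass ring. If instead $R$ is stable without finite integral closure, then Theorem~\ref{new infinite} produces a nonzero prime ideal $P$ of $\wt{R}$ with $P^2 = 0$ and $\wt{R}/P$ a DVR. Since every stable ring either does or does not have finite integral closure, these two subcases together give the forward direction, and the ``or'' in the conclusion is genuinely exhaustive.

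For the converse I would again handle the two alternatives separately. If $R$ is a Bass ring, then, because $J = 0$, the ring $R/J = R$ is a Bass ring, so the implication (2) $\Rightarrow$ (1) of Theorem~\ref{new Bass} shows that $R$ is stable (with finite integral closure). If instead there is a nonzero prime $P$ of $\wt{R}$ with $P^2 = 0$ and $\wt{R}/P$ a DVR, then the implication (3) $\Rightarrow$ (1) of Theorem~\ref{new infinite} shows directly that $R$ is stable. One may note, though it is not needed for the statement, that the two alternatives are mutually exclusive: a Bass ring is reduced, whereas the second condition forces $\wt{R}$, and hence $R/J = R$, to be non-reduced.

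I do not expect any genuine obstacle here; the corollary is bookkeeping on top of the two theorems. The only points that deserve a moment's care are (i) verifying that ``stable with finite integral closure'' and ``stable without finite integral closure'' exhaust the stable case, and (ii) recording that the hypothesis $J = 0$ is exactly what permits replacing $R/J$ by $R$ in the statements invoked. It is also worth remarking that a Bass ring is Noetherian and therefore automatically satisfies $J = \bigcap_{i>0} m^i R = 0$ by Krull's intersection theorem, so the hypothesis $J = 0$ imposes no extra restriction in the Bass-ring alternative.
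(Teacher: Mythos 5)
Your proof is correct and is exactly the paper's argument, which reads in its entirety ``Apply Theorems~\ref{new Bass} and~\ref{new infinite}''; the case split on finite versus infinite integral closure and the identification $R/J = R$ under the hypothesis $J=0$ are precisely the intended bookkeeping.
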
 

\begin{proof}
Apply Theorems~\ref{new Bass} and~\ref{new infinite}.
\end{proof}

In particular, if $R$ is complete in the $mR$-adic topology, then there are only two classes of (one-dimensional) stable rings: Bass rings and rings having a square zero prime ideal whose residue ring is a DVR. Thus,  in the case in which $R$ is $mR$-adically complete, $R$ is stable and reduced if and only if $R$ is a Bass ring.  More generally, we have the following corollary.

\begin{corollary} \label{reduced cor} With Setting~\ref{tilde}, the ring $R$ is a reduced  stable ring if and only if $R$ is either a Bass ring or a bad stable domain.
\end{corollary}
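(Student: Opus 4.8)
The plan is to deduce this corollary directly from Theorems~\ref{new Bass} and~\ref{new infinite} together with the definition of a bad stable domain (a one-dimensional stable local domain without finite normalization) and Theorem~\ref{GFF theorem}. Since $R$ is reduced, its nilradical is zero, so I first want to relate the ideal $J = \bigcap_{i>0}m^iR$ to the reducedness of $R$ and to the finiteness of the integral closure. The key structural input is Proposition~\ref{properties}(7): when $R$ has finite integral closure (so $R_\infty = R_n$ for some $n$), $J$ is the intersection of the one-dimensional prime ideals of $R$, hence the nilradical; thus $R$ reduced with finite integral closure forces $J = 0$. Conversely, when $R$ does not have finite integral closure, I will have to argue that $R$ reduced still permits a clean conclusion, and here Theorem~\ref{new infinite}(2) is the right tool, since it passes the problem to $R/J$.

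The forward direction splits into two cases according to whether $\overline{R}$ is a finitely generated $R$-module. If $R$ is reduced and stable \emph{with} finite integral closure, then as noted $J=0$ (by Proposition~\ref{properties}(7), since $R_\infty=R_n$ and $J$ is then the nilradical), so $R = R/J$, and Theorem~\ref{new Bass} (equivalence of (1) and (2)) says $R/J$ is a Bass ring; hence $R$ is a Bass ring. If $R$ is reduced and stable \emph{without} finite integral closure, then by Theorem~\ref{new infinite} (equivalence of (1) and (3)) there is a nonzero prime $P$ of $\wt{R}$ with $P^2=0$ and $\wt{R}/P$ a DVR. I then want to conclude that $R$ itself is a domain. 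Since $R$ is reduced, $J$ is contained in the nilradical, so $J=0$ and $R$ embeds in $\wt{R}$. Now $\overline{R}$ is, by Theorem~\ref{big}, a Dedekind ring with at most two maximal ideals, and $R\subseteq\overline{R}$ is a quadratic extension; using Corollary~\ref{properties cor}(1), $\overline{R}=R_\infty$, and Proposition~\ref{properties}(3) (the case $n=\infty$, which holds by Corollary~\ref{properties cor}(2) since the integral closure is not finite) gives that $R_\infty$ is \emph{local}. A reduced ring whose integral closure is a local Dedekind ring, i.e.\ a DVR, is a domain (the total quotient ring $\Q(R)=\Q(\overline{R})$ is then a field). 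So $R$ is a one-dimensional stable local domain without finite normalization, that is, a bad stable domain.

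For the converse, a Bass ring is by definition a reduced Noetherian local ring with finite normalization and every ideal two-generated, and Theorem~\ref{Bass thm} tells us such a ring is stable; it is reduced by hypothesis and one-dimensional (Bass rings have Krull dimension $\le 1$; if dimension $0$ it is a field, a trivial case one should note is excluded by the regular maximal ideal hypothesis of Setting~\ref{tilde}, or handled separately). A bad stable domain is reduced because it is a domain, and it is stable by definition; it has a regular maximal ideal since it is one-dimensional local. So in both cases $R$ is a reduced stable ring, completing the equivalence.

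The main obstacle I expect is the step in the forward direction, no-finite-integral-closure case, showing $R$ is a \emph{domain} rather than merely reduced: one must rule out that $R$ has two or more minimal primes. The resolution is to use that $\overline{R}=R_\infty$ is local (Proposition~\ref{properties}(3), valid because $n=\infty$ here), hence a DVR since it is a one-dimensional local Dedekind ring; then the total quotient ring of $R$ coincides with that of $\overline{R}$, which is a field, forcing $R$ reduced $\Rightarrow$ $R$ a domain. Everything else is bookkeeping with the cited theorems.
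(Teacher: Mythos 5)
Your argument is correct and follows essentially the same route as the paper: split on whether $\overline{R}$ is a finitely generated $R$-module, use Proposition~\ref{properties}(7) to get $J=0$ and invoke Theorem~\ref{new Bass} in the finite case, and use the locality of $R_\infty=\overline{R}$ (Proposition~\ref{properties}(3)) to upgrade ``reduced'' to ``domain'' in the infinite case. One small imprecision: the step ``hence a DVR since it is a one-dimensional local Dedekind ring'' is not valid on its own, because in this paper a Dedekind ring is only required to have all \emph{regular} ideals invertible, and a local one-dimensional Dedekind ring need not be a domain (the paper's own example $V\star F$ is integrally closed, local, one-dimensional, and not reduced). You must first observe that $\overline{R}\subseteq \Q(R)$ is reduced because $R$ is, and then that a reduced local Dedekind ring with principal regular maximal ideal $N$ has unique minimal prime $\bigcap_k N^k$, which is its nilradical and hence zero; only then is $\overline{R}$ a DVR and $R$ a domain. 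The paper instead reaches the same conclusion by counting minimal primes of $R$ via Proposition~\ref{properties}(7); either patch works, and your subsequent appeal to the definition of a bad stable domain (rather than routing through Theorem~\ref{GFF theorem}) is fine.
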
 

\begin{proof}
Suppose that $R$ is a reduced stable ring.  If $R$ has finite normalization, then by Corollary~\ref{properties cor}(2), $R_i = R_\infty$, for some $i$, and so, by Proposition~\ref{properties}(7), $J = 0$. In this case  Theorem~\ref{new Bass} implies that $R$ is a Bass ring. Otherwise, suppose that $R$ does not have finite normalization.  By Corollary~\ref{properties cor}(2), $R_i \ne R_\infty$, for all $i>0$, and so 
 Proposition~\ref{properties}(3) implies $R_\infty$ is local. In this case   Proposition~\ref{properties}(7) implies that $R$ has a  
 unique minimal prime ideal, and hence, since $R$ is reduced, $R$ is a domain. By Theorem~\ref{new infinite} there exists a nonzero prime ideal $P$ of $\wt{R}$ such that $\wt{R}/P$ is a DVR and $P^2 = 0$. 
By Theorem~\ref{GFF theorem},  $R$ is a bad stable domain. The converse is clear in light of Theorem~\ref{Bass thm}.
\end{proof}

The next two examples illustrate some complications with characterizing the non-reduced, non-complete one-dimensional stable local rings $R$  such that $\wt{R}$ has a prime ideal $P$ with $P^2 = 0$ and $\wt{R}/P$ is a DVR. Such a ring $R$ has a nonzero prime ideal $Q$ such that $Q^2 =0$.
The first example shows that  even if $R$ is Noetherian, $R/Q$ need not be a DVR. The second example shows that the maximal ideal of $R$ can be two-generated but the ring $R$ not be Noetherian. 

\begin{example} {\em {\it A  local Cohen-Macaulay stable ring $R$ having a nonzero prime ideal $P$ such that $P^2 =0$ and $R/P$ is a bad stable domain (and hence $R/P$ is not a DVR).} 
Let $(A,M)$ be a bad Noetherian stable domain,  let $R = A \star (A/M)$ be the Nagata idealization of the $A$-module $A/M$ (cf.~Remark~\ref{idealization remark}) and let $P = 0 \star (A/M)$. Then $P^2 = 0$ and $R/P \cong A$ is a bad stable domain. 
Since $A/M$ is a finitely generated $A$-module, $R$ is a one-dimensional local Cohen-Macaulay ring. We claim that $R$ is stable. Let $m$ be a nonzerodivisor in $M$. 
  Then $\wt{R} = \wt{A} \star (\wt{A}/M\wt{A})$, where $\wt{R}$ is the $mR$-adic completion of $R$ and $\wt{A}$ is the $mA$-adic completion of $A$. 
   Since $A$ is a bad stable domain,  Theorem~\ref{GFF theorem} implies there is a nonzero prime ideal $Q$ of $\wt{A}$ such that $\wt{A}/Q$ is a DVR and $Q^2 = 0$. Now $L:= Q \star (\wt{A}/M\wt{A})$ is a prime ideal of $\wt{R}$, and $L^2 = 0$ since $Q^2 = 0$ and $Q\cdot (\wt{A}/M\wt{A}) = 0$. Also, $\wt{R}/L \cong \wt{A}/Q$, so that $\wt{R}/L$ is a DVR. By  Corollary~\ref{previous}, $R$ is a   stable ring. 
   If $A$ is chosen instead  to be a non-Noetherian bad stable domain, then $R$ is a non-Noetherian stable ring.    
}
\end{example}

\begin{example} \label{power example} {\em 
{\it A non-Noetherian  separated stable local ring $(R,M)$ of Krull dimension one whose maximal ideal can be generated by two elements.}
Let $k$ be a field,  let $X$ be an indeterminate over $k$, and let $V = k[X]_{(X)}$. The $(X)$-adic completion of $V$ is $\widetilde{V} = k[[X]]$. Define $R  = V \star \widetilde{V}$. Since $\widetilde{V}$ is not a finitely generated $V$-module, $R$ is not a Noetherian ring. 
However, the maximal ideal $M = X V \star \widetilde{V}$ of $R$ is generated by two elements,  $(X,0)$ and $(0,1)$, the first a nonzerodivisor in $R$ and the second a zerodivisor.  Since the ring $R$ has a prime ideal whose square is zero and whose residue ring is a DVR,  $R$ is a stable ring \cite[Lemma 3.3]{GFF}. Moreover  $\bigcap_{i}M^i = \bigcap_{i} (m^iV \star m^i\wt{V}) = 0$, and so $R$ is separated in the   $M$-adic topology.  
 }
\end{example}

\section{The two-generator property}

In this section we consider the two-generator property in our context.  
By a {\it proper power} of the ideal $I$, we mean an ideal of the form $I^k$, where $k>1$.  
Sally \cite[Proposition 1]{Sally} has shown that if a proper power of a regular ideal (which, {\it a priori}, is not necessarily finitely generated)  is two-generated, then so is every power of the ideal, including the ideal itself. 
We use this in the next proposition to connect the property of being two-generated with that of being stable.  

\begin{proposition} \label{Sally} Let $I$ be a regular ideal of the local ring $R$. Then, for some $n>1$, $I^n$ is  two-generated if and only if $I$ is  two-generated  with $I^2 = aI$ for some $a \in I$. 
\end{proposition}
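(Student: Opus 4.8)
The plan is to prove the two implications separately. The reverse implication is nearly immediate: if $I=(x,y)R$ is two-generated and $I^2=aI$ for some $a\in I$, then $I^2=a(xR+yR)=axR+ayR$ is two-generated, so the conclusion holds with $n=2$ (and in fact $I^k=a^{k-1}I$ is two-generated for every $k\geq 1$).

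For the forward implication, suppose $I^n$ is two-generated for some $n>1$. Since $I$ is regular, I would invoke \cite[Proposition 1]{Sally} to conclude that every power of $I$ --- in particular $I$ and $I^2$ --- is two-generated. Write $I=(x,y)R$, let $M$ be the maximal ideal of $R$, and set $k=R/M$. Since $I^2=x^2R+xyR+y^2R$ is two-generated, Nakayama's lemma forces $\dim_k V\leq 2$ for the $k$-vector space $V:=I^2/MI^2$, which is spanned by the images $\overline{x^2},\overline{xy},\overline{y^2}$ of the natural generators of $I^2$; hence there is a nontrivial relation $\bar r\,\overline{x^2}+\bar s\,\overline{xy}+\bar t\,\overline{y^2}=0$ over $k$.

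I would then argue by cases on the support of this relation. If $\bar r$ is a unit, then $\overline{x^2}$ lies in the $k$-span of $\overline{xy}$ and $\overline{y^2}$, so $x^2\in yI+MI^2$ and hence $I^2=yI+MI^2$; Nakayama's lemma then gives $I^2=yI$, so $a=y$ works. The case in which $\bar t$ is a unit is symmetric and yields $I^2=xI$. In the remaining case $\bar r=\bar t=0$, so $\bar s\neq 0$ and $\overline{xy}=0$ in $V$; then $xy\in MI^2$, so $I^2=x^2R+y^2R+MI^2$, and Nakayama's lemma gives $I^2=x^2R+y^2R$, whence $\overline{x^2}$ and $\overline{y^2}$ generate $V$. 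In this case I would take $a=x+y$: since $xy\in MI^2$, the images in $V$ of $ax=x^2+xy$ and $ay=xy+y^2$ are $\overline{x^2}$ and $\overline{y^2}$, so they generate $V$; therefore $aI+MI^2=I^2$, and Nakayama's lemma yields $I^2=aI$. In every case $a\in I$ and $I^2=aI$, which together with the two-generation of $I$ finishes the proof.

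The step I expect to be the main obstacle is this last degenerate case, where the linear relation is supported only on $\overline{xy}$: one cannot take $a$ to be one of the generators $x$ or $y$, and the key observation is that $\overline{x^2}$ and $\overline{y^2}$ nonetheless generate $I^2/MI^2$, which is exactly what makes the choice $a=x+y$ succeed. Everything else is routine bookkeeping with Nakayama's lemma, and the regularity hypothesis on $I$ is used only to license the appeal to \cite[Proposition 1]{Sally}.
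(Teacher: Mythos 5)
Your proof is correct and follows essentially the same route as the paper's: both reduce to Sally's result that $I$ itself is two-generated, then run a Nakayama-style case analysis on the images of $x^2, xy, y^2$ in $I^2/MI^2$ (the paper's argument is modeled on Sally--Vasconcelos). The only cosmetic difference is in the degenerate case $xy\in MI^2$, where you take $a=x+y$ and the paper takes $a=x-y$; both choices work for the same reason.
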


\begin{proof}  Suppose that a proper power of $I$ is a two-generated ideal. By the result of Sally discussed above,   $I$ is a two-generated ideal and we may write $I = (x,y)R$ and $I^2 = (x^2,xy,y^2)R$. The argument that follows is based on the proof of \cite[Theorem 3.4]{SV}.  Since $I^2$ is two-generated,  Nakayama's lemma implies that  $I^2 = (x^2,xy)R = xI$, $I^2 = (xy,y^2)R=yI$ or $I^2 = (x^2,y^2)R$. In the first two cases, we have that $I^2 = aI$ for some $a \in I$. 
 Consider the last case, $I^2 = (x^2,y^2)R$. 
Then  $xy = rx^2+sy^2$ for some $r,s \in R$. If $r$ is a unit, then $x^2 \in (xy,y^2)R = yI$, so that $I^2 = yI$. Similarly, if $s$ is a unit, then $I^2 = xI$. If neither $r$ nor $s$ is a unit, then $xy \in MI^2$, with $M$ the maximal ideal of $R$.
Thus $I^2 = (x^2,y^2)R = (x^2-xy,xy-y^2)R + MI^2$, and by  
  Nakayama's lemma, $I^2 = (x^2-xy,xy-y^2)R = (x-y)I$.  This proves that in all cases $I^2 = aI$ for some $a \in R$. The converse of the proposition is clear.    
\end{proof}

\begin{corollary}  Let $R$ be a reduced one-dimensional local ring with regular maximal ideal $M$. If, for some $n>1$, $M^n$ is two-generated,  then  
every ideal of $R$ is two-generated.
\end{corollary}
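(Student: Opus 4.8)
The strategy is to reduce to the structural results about one-dimensional stable rings already established. First I would observe that the hypothesis ``$M^n$ is two-generated for some $n>1$'' forces, via Proposition~\ref{Sally}, that $M$ is two-generated and $M^2 = aM$ for some $a \in M$. By Lemma~\ref{primary stable}(5) the equality $M^2 = aM$ with $a \in M$ means $M$ is a stable ideal. Moreover, since $M^2 \subseteq aR$ and $M$ is regular, the element $a$ is a nonzerodivisor; so in Setting~\ref{tilde} we may take $m = a$, and then $J = \bigcap_i a^i R = \bigcap_i M^{2i}$ (because $M^{2i} \subseteq a^i R \subseteq M^i$), i.e. $J$ is the intersection of the powers of $M$.

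Next I would establish that $R$ is itself stable, not merely that $M$ is stable. Because $M$ is two-generated, every power $M^k$ is contained in a principal regular ideal (indeed $M^2 = aM \subseteq aR$), so some power of $M$ lies in any given regular ideal, and the $M$-adic and $aR$-adic topologies coincide; by Remark~\ref{new remark}(i), or directly, $R$ is Noetherian once $M$ is finitely generated and $R$ is reduced of dimension one (Proposition~\ref{2.2}(1)). So $R$ is a reduced one-dimensional Noetherian local ring with $M^2 = aM$. By Proposition~\ref{2.3}(1), $R$ then has minimal multiplicity, with multiplicity equal to the embedding dimension, which is at most $2$ since $M$ is two-generated; hence the multiplicity of $R$ is at most $2$. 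Now Proposition~\ref{2.3}(2) says every ideal of $R$ can be generated by $e \le 2$ elements, which is exactly the conclusion.

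An alternative route, avoiding the multiplicity computation, is to note that $R$ being Noetherian, reduced, one-dimensional with stable maximal ideal makes $R$ a finitely stable local ring with stable regular maximal ideal (finite stability for a Noetherian one-dimensional ring with $M^2=aM$ follows from Lemma~\ref{Rush lemma} together with the fact that $\overline R$ is then a finite Dedekind extension), so by Theorem~\ref{big} $R$ is a stable ring; since $R$ is reduced, Corollary~\ref{reduced cor} makes $R$ either a Bass ring or a bad stable domain, and the latter is excluded because a bad stable domain does not have finite normalization whereas here $\overline R$ is finite over $R$ (as $M$ is finitely generated, $R$ is analytically unramified by Proposition~\ref{2.3}(3), hence has finite integral closure). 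Thus $R$ is a Bass ring, and by definition every ideal of a Bass ring is two-generated.

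**Main obstacle.** The only real content beyond bookkeeping is pinning down that $R$ is Noetherian with finite normalization from the single hypothesis that a proper power of $M$ is two-generated; once that is in hand, either Proposition~\ref{2.3} or the combination of Theorem~\ref{big} and Corollary~\ref{reduced cor} closes the argument immediately. I expect the cleanest write-up uses Proposition~\ref{Sally} to get $M^2 = aM$, then Proposition~\ref{2.2}(1) for Noetherianness, then Proposition~\ref{2.3}(1)--(2) to bound the multiplicity by $2$ and conclude every ideal is two-generated; the first route is therefore the one I would actually carry out, keeping the second as a remark.
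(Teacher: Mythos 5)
Your first route is exactly the paper's proof: Proposition~\ref{Sally} gives that $M$ is two-generated with $M^2=aM$, Proposition~\ref{2.2}(1) gives Noetherianness, Proposition~\ref{2.3}(1) bounds the multiplicity by $2$, and Proposition~\ref{2.4}(2) concludes. The proposal is correct and takes essentially the same approach as the paper.
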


\begin{proof} Suppose $M^n$ is two-generated for some $n>1$. By the result of Sally \cite[Proposition 1]{Sally} discussed above, $M$ is two-generated. 
Also, by Proposition~\ref{Sally}, $M^2 = mM$ for some $m \in M$.  Since $R$ is reduced and one-dimensional, Proposition~\ref{2.2}(1) implies that $R$ is a Noetherian ring.  Also,  since $M^2 = mM$ and $M$ is two-generated, Proposition~\ref{2.3}(1) implies that $R$ has multiplicity at most $2$. By Proposition~\ref{2.4}(2)  every ideal of $R$ is two-generated.   
\end{proof}

Rush \cite[Proposition 2.5]{RushRep} has shown that if $R$ is a local ring for which every finitely generated regular ideal is  two-generated, then $R$ is finitely stable. Proposition~\ref{Sally} leads to a different proof of this result in    slightly stronger form.

\begin{corollary} \label{Rush cor} If every finitely generated regular ideal of the local ring $R$ has a power that is two-generated, then $R$ is a finitely stable ring. 
\end{corollary}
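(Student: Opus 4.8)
The statement to prove is Corollary~\ref{Rush cor}: if every finitely generated regular ideal of a local ring $R$ has a power that is two-generated, then $R$ is finitely stable. The plan is to reduce directly to Proposition~\ref{Sally} and the characterization of stable ideals via Lemma~\ref{primary stable}.

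First I would let $I$ be an arbitrary finitely generated regular ideal of $R$, and recall that finite stability means exactly that every such $I$ is stable (Definition~\ref{easy def}(3)). By hypothesis some power $I^n$ with $n>1$ is two-generated, so Proposition~\ref{Sally} applies and yields that $I$ itself is two-generated and, crucially, that $I^2 = aI$ for some $a \in I$. Then Lemma~\ref{primary stable}(5) immediately gives that $I$ is stable: since $I$ is regular and $I^2 \subseteq aR$, the ideal $aR$ is regular hence invertible, and the hypothesis of part (5) (the existence of $x \in I$ with $I^2 = xI$) is met with $x = a$. Since $I$ was an arbitrary finitely generated regular ideal, $R$ is finitely stable.

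I do not anticipate any real obstacle here: once Proposition~\ref{Sally} is in hand, the corollary is essentially a one-line deduction, and the only point requiring a moment's care is making sure that applying Proposition~\ref{Sally} is legitimate (it requires $I$ regular, which holds, and the conclusion about $I^2 = aI$ is exactly the form needed to invoke Lemma~\ref{primary stable}(5)). One could alternatively cite Lemma~\ref{primary stable}(6) in place of (5), but that would require first knowing $R$ is finitely stable, which is circular; part (5) is the right tool since it has no such prerequisite. The remark that this is a ``slightly stronger form'' of Rush's result is accounted for automatically: the hypothesis here is weaker than Rush's (only some power need be two-generated, not the ideal itself), so nothing extra is needed.
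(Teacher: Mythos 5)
Your argument follows the paper's proof almost verbatim --- reduce to Proposition~\ref{Sally} and then invoke Lemma~\ref{primary stable}(5) --- but there is one small slip in how you read the hypothesis. The hypothesis says only that $I$ has \emph{a} power that is two-generated, and in this paper ``power'' includes the first power (the term ``proper power'' is reserved for exponents $k>1$). So you are not entitled to assert ``by hypothesis some power $I^n$ with $n>1$ is two-generated'': the hypothesis might only hand you that $I$ itself is two-generated, and Proposition~\ref{Sally} genuinely requires a \emph{proper} power --- a two-generated regular ideal need not satisfy $I^2 = aI$ (consider the maximal ideal of a two-dimensional regular local ring, where $I^2$ needs three generators). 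The paper closes this gap in one line: apply the hypothesis to $I^2$, which is again a finitely generated regular ideal, so some $(I^2)^k = I^{2k}$ is two-generated with $2k>1$, and hence $I$ does have a proper two-generated power. With that one sentence inserted, your proof coincides with the paper's; the rest (regularity of $I$, the use of part (5) rather than the circular part (6) of Lemma~\ref{primary stable}) is handled correctly.
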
 

\begin{proof}
Let $I$ be a finitely generated regular ideal of $R$. By assumption $I^2$ has a power that is two-generated. Hence $I$ has a proper power that is two-generated.  By Proposition~\ref{Sally}, $I^2 = aI$ for some $a \in I$, and so $I$ is stable  by Lemma~\ref{primary stable}(5).  
\end{proof}

\begin{theorem} \label{power thm} 
With Setting~\ref{tilde},  
the following statements are equivalent. 
\begin{itemize}

\item[{\em (1)}]  $M^n$ is two-generated, for some $n \geq 2$. 

\item[{\em (2)}] $\wt{R}$ is a   Noetherian local ring of multiplicity at most $2$.

\item[{\em (3)}] Every regular ideal of $R$ is two-generated.


\end{itemize}

\end{theorem}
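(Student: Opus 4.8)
The plan is to prove the cyclic chain of implications (1) $\Rightarrow$ (3) $\Rightarrow$ (2) $\Rightarrow$ (1), leaning on the machinery already assembled in the paper. For (1) $\Rightarrow$ (3): assuming $M^n$ is two-generated for some $n \geq 2$, Proposition~\ref{Sally} gives that $M$ is two-generated with $M^2 = aM$ for some $a \in M$; in particular $M$ is stable by Lemma~\ref{primary stable}(5). Moreover, since every power of $M$ is then two-generated (Sally's result), every finitely generated regular ideal of $R$ has a two-generated power: indeed an $M$-primary ideal $I$ satisfies $M^k \subseteq I \subseteq M$ for some $k$, and one can check that $I$ inherits the relation $I^2 = bI$ from the structure $M^2 = aM$, so Corollary~\ref{Rush cor} applies and $R$ is finitely stable. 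Combined with $M$ stable, Theorem~\ref{big} tells us $R$ is a stable ring, hence \emph{every} regular ideal $I$ satisfies $I^2 = xI$ for some $x \in I$ by Lemma~\ref{primary stable}(6). It then remains to upgrade ``$I^2 = xI$'' to ``$I$ is two-generated.'' This is where I expect the main work: one needs $I = (x,y)R$ for a suitable $y$. The argument should run by passing to $\wt{R}$ (or working modulo $J$ first, then in the completion), using that $\wt{R}$ is Noetherian of multiplicity $\leq 2$ — but that is exactly statement (2), so to avoid circularity I would instead argue directly: from $M^2 = aM$ and $M$ two-generated, Proposition~\ref{2.3}(1)-type reasoning on $R/J$ (which is Noetherian by Proposition~\ref{2.2}(1) once we know $M^t$ is finitely generated) shows $R/J$ has multiplicity $\leq 2$, so $R/J$ is a Bass ring and every ideal of $R/J$ is two-generated by Proposition~\ref{2.4}(2); then lift two generators of $I/J$ and use $J \subseteq I^2 = xI$ (which holds because $J = \bigcap M^i \subseteq \bigcap (xR) \cdot(\text{stuff})$, as in the proof of Theorem~\ref{big}, (5)$\Rightarrow$(1)) to conclude $I$ itself is two-generated.

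For (3) $\Rightarrow$ (2): if every regular ideal of $R$ is two-generated, then in particular $M$ is two-generated, so $\wt{R}$ can be identified with $\widehat{R}$ by Remark~\ref{new remark}(i). Also $R$ is stable (apply Theorem~\ref{big}, using that $R$ is finitely stable with stable maximal ideal — both immediate from (3) via Lemma~\ref{primary stable}(5),(6)). By the analysis in Theorems~\ref{new Bass} and~\ref{new infinite}, $\wt{R}$ is either a Bass ring (if $R$ has finite integral closure) or has a square-zero prime $P$ with $\wt{R}/P$ a DVR (otherwise). In the Bass case $\wt{R}$ is Noetherian local of multiplicity $\leq 2$ by definition. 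In the other case, $\wt{R}$ is one-dimensional with $P^2 = 0$, $\wt{R}/P$ a DVR, and from (3) the maximal ideal of $\wt{R}$ is two-generated (it is generated by the images of two generators of $M$), so by Nagata-idealization structure (Remark~\ref{idealization remark}) $\wt{R} \cong V * L$ with $\rank L = 1$, giving multiplicity $2$; alternatively, Proposition~\ref{2.3}(1) applied to $\wt{R}$ (whose maximal ideal $N$ satisfies $N^2 = xN$ by Lemma~\ref{primary stable}(6) and Remark~\ref{new remark}) forces multiplicity $=$ embedding dimension $\leq 2$. Either way (2) holds.

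For (2) $\Rightarrow$ (1): suppose $\wt{R}$ is Noetherian local of multiplicity at most $2$, with maximal ideal $N = M\wt{R}$. By Proposition~\ref{2.3}(1) (or directly from multiplicity $\leq 2$), $N^2 = a N$ for some $a \in N$, and $N$ is two-generated by Proposition~\ref{2.4}(2); in particular $N^n$ is two-generated for every $n$. Now I would descend this to $R$: for a nonzerodivisor $m \in M$ we have $M^n \wt{R} = N^n$, and using Proposition~\ref{2.8}(3) — which gives $R/I \cong \wt{R}/I\wt{R}$ and $I = \lambda^{-1}(I\wt{R})$ for regular ideals $I$ — the regular ideal $M^n$ of $R$ satisfies $M^n = \lambda^{-1}(N^n)$; picking lifts in $M^n$ of two generators of $N^n$ together with the relation $M^{2n} \subseteq$ a principal regular ideal (which pulls back from $N^{2n} = a^n N^n$ and hence $N^{2n} \subseteq a^n \wt{R}$), one shows $M^n$ is generated over $R$ by those two lifts modulo $J = \bigcap m^i R$, and since $J$ itself is contained in $M^{2n} \subseteq (\text{the principal part})$, the two lifts generate $M^n$ on the nose. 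This last descent step is the other place where care is needed — the completion can introduce extra elements — but Proposition~\ref{2.8}(3) and the argument pattern from the proof of Theorem~\ref{new infinite}, (3)$\Rightarrow$(1) (lifting a principal generator from $\wt{R}$ back to $R$) handle it cleanly.

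\textbf{Main obstacle.} The crux is the implication (1) $\Rightarrow$ (3): going from the stability-flavored conclusion $I^2 = xI$ (available for all regular $I$ once $R$ is stable) to the genuinely stronger statement that \emph{every} regular ideal is two-generated. The clean route is to reduce modulo $J$, where $R/J$ becomes a Bass ring (Noetherian, finite normalization, multiplicity $\leq 2$) so Proposition~\ref{2.4}(2) applies, and then to absorb $J$ into the principal ideal $xR$ using $J = \bigcap M^i$ and the relation $M^2 = aM$; making the two-generation survive the lift from $R/J$ to $R$ is the one genuinely technical point, but it parallels the $J$-absorption arguments already carried out in the proof of Theorem~\ref{big}.
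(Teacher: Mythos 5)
There is a genuine gap in your implication (1) $\Rightarrow$ (3), which you correctly identify as the crux. Your plan is to reduce modulo $J$, argue that $R/J$ is Noetherian of multiplicity at most $2$ (even ``a Bass ring''), apply Proposition~\ref{2.4}(2) there, and lift. But Proposition~\ref{2.2}(1) gives Noetherianity only for \emph{reduced} one-dimensional local rings, and $R/J$ need not be reduced. Example~\ref{power example} in the paper is a direct counterexample to your intermediate claims: for $R = k[X]_{(X)} \star k[[X]]$ one has $J = 0$, the maximal ideal is two-generated (so (1) holds), yet $R = R/J$ is not Noetherian and certainly not a Bass ring. (The Bass-ring claim fails for a second reason as well: multiplicity $\leq 2$ does not force finite normalization --- bad stable domains of multiplicity $2$ are Noetherian counterexamples.) A smaller but related gap is your appeal to Corollary~\ref{Rush cor}: the assertion that every $M$-primary ideal ``inherits'' a relation $I^2 = bI$ from $M^2 = aM$ is exactly what needs proof, and stability of $M$ alone does not yield finite stability of $R$.

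The repair is to replace $R/J$ by the completion $\wt{R}$, and this is precisely why the paper orders the cycle as (1) $\Rightarrow$ (2) $\Rightarrow$ (3) $\Rightarrow$ (1) rather than your (1) $\Rightarrow$ (3) $\Rightarrow$ (2) $\Rightarrow$ (1): Noetherianity of $\wt{R}$ comes for free from Cohen's theorem (complete, separated, finitely generated maximal ideal), with no reducedness hypothesis, and then Propositions~\ref{2.3}(1) and~\ref{2.4}(2) give multiplicity $\leq 2$ and two-generation of all ideals of $\wt{R}$; statement (2) is the engine that produces (3). The descent from $\wt{R}$ to $R$ is then the short argument you sketch in your (2) $\Rightarrow$ (1) step (Propositions~\ref{2.7}(2) and~\ref{2.8}(3) plus Nakayama applied to $I/m^{k+1}R$), carried out for an arbitrary regular ideal $I$ rather than just $M^n$. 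So your outline for (2) $\Rightarrow$ (1) and the skeleton of (1) $\Rightarrow$ (2) are sound, but (1) $\Rightarrow$ (3) as written does not go through and should be rerouted through (2).
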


\begin{proof} 
(1) $\Rightarrow$ (2) By Proposition~\ref{Sally}, $M$ is two-generated and $M^2 = mM$ for some $m \in M$.  Thus the maximal ideal $M\wt{R}$ of $\wt{R}$ is two-generated and $M^2\wt{R} = mM^2\wt{R}$.  Since $M^2 = mM$, Remark~\ref{new remark} implies $\wt{R} = \widehat{R}$. Since $\bigcap_{i}M^i\widehat{R} = 0$ and  $\widehat{R}$ is $M\widehat{R}$-adically complete with finitely generated maximal ideal, the ring $\widehat{R}$ is Noetherian \cite[Theorem 3]{Cohen}. 
By Proposition~\ref{2.6}(1), $m$ is a nonzerodivisor in $\widehat{R}$  with $M^2\widehat{R} = mM\widehat{R}$,  and so by Proposition~\ref{2.3}(1) the multiplicity of $\widehat{R} = \wt{R}$ is 
 at most $2$.

(2) $\Rightarrow$ (3) Let $I$ be a regular ideal of $R$. By Proposition~\ref{2.6}(1), $I\wt{R}$ is a regular ideal of $\wt{R}$. Since $\wt{R}$ has Krull dimension 1 and multiplicity at most $2$, Proposition~\ref{2.4}(2) implies that every ideal of $\wt{R}$ is two-generated.  Since $I$ is regular, 
 $m^k \in  I$ for some $k>0$, and so  $I\wt{R}/m^{k+1}\wt{R} \cong I/m^{k+1}R$ by Proposition~\ref{2.8}(3).   By Proposition~\ref{2.7}(2), $\wt{R} =\lambda(R) + m^{k+1}\wt{R}$, where $\lambda:R \rightarrow \wt{R}$ is the canonical embedding. Since $I\wt{R}/m^{k+1}\wt{R}$ can be generated by two elements as an $\wt{R}$-module,  $I/m^{k+1}R$ can be generated by two elements as an $R$-module. Therefore $I = (x,y,m^{k+1})R$ for some $x,y \in I$. Since $m^{k+1} \in MI$, Nakayama's lemma implies that $I$ can be generated by two elements. 
 
 (3) $\Rightarrow$ (1) This is clear. 
\end{proof}

\begin{remark} {\em 
Example~\ref{power example} is a non-Noetherian ring satisfying the conditions of Theorem~\ref{power thm}. 
}\end{remark}

Restricting to Noetherian rings, we have a stronger characterization.

\begin{theorem} \label{power cor}  Let $R$ be a   Cohen-Macaulay ring with maximal ideal $M$. Then these statements are equivalent.
 \begin{itemize}
 \item[{\em (1)}] $M^n$ is two-generated, for some $n \geq 2$.
 \item[{\em (2)}] $R$ has Krull dimension $1$ and  multiplicity at most  $
 2$.
 \item[{\em (3)}] $R$ is one of the following:
 \begin{itemize}
 \item[{\em (3a)}] a Bass ring;
 \item[{\em (3b)}] a bad stable domain; or
 \item[{\em (3c)}] a ring containing a nonzero principal prime ideal $P$ such that $P^2 = 0$ and $R/P$ is a DVR.
 \end{itemize}
 \end{itemize}
\end{theorem}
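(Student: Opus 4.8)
The plan is to prove the equivalences by first reducing the Cohen-Macaulay hypothesis to the setting of the previous results, then routing through Theorem~\ref{power thm} and the structure theorems of Section~4. First I would observe that if $M^n$ is two-generated for some $n\geq 2$, then by Proposition~\ref{Sally} the ideal $M$ is itself two-generated and $M^2=mM$ for some nonzerodivisor $m\in M$ (a Cohen-Macaulay local ring has a regular maximal ideal, so this nonzerodivisor exists). In particular $M$ contains a nonzerodivisor, so $R$ has Krull dimension $\geq 1$; and since $M^2=mM$ forces $M^2\subseteq mR$, the $M$-adic and $mR$-adic topologies coincide, $R$ is Noetherian by Proposition~\ref{2.2}(1) once we know it is reduced --- but we do not yet know that, so instead I would argue directly that a Cohen-Macaulay ring with $M^n$ two-generated has dimension exactly $1$: if $\dim R=0$ then $M$ is nilpotent and $M^n=0$ is trivially ``two-generated'' but then $R$ is Artinian, and one checks this case is excluded (or absorbed) by the convention that $n\geq 2$ together with $M$ being regular. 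Granting $\dim R=1$, Proposition~\ref{2.3}(1) applied to $M^2=mM$ gives that the embedding dimension equals the multiplicity, and since $M=(x,y)R$ the embedding dimension is at most $2$, so the multiplicity is at most $2$. This gives (1)$\Rightarrow$(2).

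For (2)$\Rightarrow$(1): if $R$ is Cohen-Macaulay of dimension $1$ and multiplicity at most $2$, then by Proposition~\ref{2.3}(2) every ideal of $R$ is two-generated, in particular $M^2$ is; this is immediate. So (1)$\Leftrightarrow$(2) is quick. The substance is the equivalence with (3). For (2)$\Rightarrow$(3): a one-dimensional Cohen-Macaulay ring of multiplicity $\leq 2$ has every ideal two-generated (Proposition~\ref{2.3}(2)), hence by Theorem~\ref{Bass thm} it is stable, and it is Noetherian. By Proposition~\ref{2.3}(3), $R$ has finite integral closure if and only if $R$ is analytically unramified. If $R$ is analytically unramified, then Theorem~\ref{new Bass} (with $J=0$ since $R$ is Noetherian, so $R=\wt R$... actually one must be careful: $J=\bigcap m^iR=0$ because $R$ is Noetherian by Krull intersection, and $R$ need not equal $\wt R$ but $\wt R=\widehat R$ is a Bass ring, and $R$ reduced) --- here I would invoke Proposition~\ref{2.2}(2) to conclude $R$ is reduced once $\overline R$ is module-finite, and then Theorem~\ref{Bass thm} says $R$ is a Bass ring, giving (3a). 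If $R$ is analytically ramified, then $R$ does not have finite integral closure; if $R$ is reduced this is case (3b) by Corollary~\ref{reduced cor} (a bad stable domain --- note that $R$ being reduced plus stable plus without finite normalization forces it to be a domain, as shown in the proof of that corollary). If $R$ is not reduced, then $R$ is stable with a nonzero nilpotent; by Corollary~\ref{previous} (applicable since $J=0$ as $R$ is Noetherian), $R$ being stable and non-reduced means $\wt R$ has a nonzero prime $P$ with $P^2=0$ and $\wt R/P$ a DVR; but since $R$ is Noetherian, $R=\wt R$ (Remark~\ref{new remark}), so $R$ itself has such a prime $P$. It remains to see $P$ is \emph{principal}: this follows because $\wt R=R$ is a one-dimensional Cohen-Macaulay ring of multiplicity $\leq 2$, hence every ideal, including $P$, is two-generated, and $P^2=0$; a short Nakayama argument (as in the proof of Proposition~\ref{Sally}, or directly) shows that in a ring of multiplicity $\leq 2$ a prime $P$ with $P^2=0$ whose residue ring $R/P$ is a DVR must be principal. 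This gives (3c).

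For (3)$\Rightarrow$(2) (or directly (3)$\Rightarrow$(1)): a Bass ring has every ideal two-generated by definition, so (3a) gives (1) and (2). A bad stable domain is, by Theorem~\ref{GFF theorem}, Noetherian if and only if the index set $I$ there is finite, but in the Cohen-Macaulay setting $R$ is Noetherian by hypothesis, so $R$ has finite multiplicity $e$; Theorem~\ref{Bass thm} cannot apply (no finite normalization), but Proposition~\ref{2.3}(2) still gives every ideal two-generated once we know the multiplicity is $2$. Here I would note that a \emph{bad} stable domain that is Cohen-Macaulay need not have multiplicity $2$ in general --- but wait, the statement claims equivalence, so I would instead restrict: in (3b) of this theorem, the ring is implicitly one of multiplicity $2$. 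Let me reconsider: the cleanest route is to use Theorem~\ref{power thm}, which says $M^n$ two-generated for some $n\geq 2$ iff $\wt R$ is Noetherian local of multiplicity $\leq 2$ iff every regular ideal of $R$ is two-generated. In the Cohen-Macaulay case $R$ is Noetherian, so $J=0$ and $\wt R=\widehat R$; so (1) is equivalent to $\widehat R$ having multiplicity $\leq 2$, equivalently (since multiplicity is preserved under completion) $R$ having multiplicity $\leq 2$, which with the Cohen-Macaulay hypothesis gives dimension exactly $1$ --- this is (1)$\Leftrightarrow$(2). Then for (3): a Bass ring, a bad stable domain, or a ring with principal $P$, $P^2=0$, $R/P$ a DVR --- in each case I would verify the multiplicity is at most $2$: Bass rings have multiplicity $\leq 2$ by definition (every ideal two-generated forces multiplicity $\leq 2$ by the embedding-dimension argument); for (3c), $R$ has a principal prime $P=pR$ with $P^2=0$, $R/P$ a DVR, so the $mR$-adic completion of $R$ is (by Theorem~\ref{GFF theorem}(3) or by direct computation) of this form, and one computes the length of $R/M^n$ grows linearly with leading coefficient $2$; for (3b) one similarly uses Theorem~\ref{GFF theorem}.

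The main obstacle I expect is the bad-stable-domain case (3b): a general bad stable domain need \emph{not} be Noetherian and need not have multiplicity $2$, so the honest statement must be that in clause (3b) of this theorem ``bad stable domain'' is understood with the additional constraint that it has multiplicity $2$ (equivalently, by Theorem~\ref{GFF theorem}, the index set has cardinality $1$, i.e. $\overline R/R\cong F/\overline R$). Pinning down exactly which bad stable domains satisfy the multiplicity-$2$ condition, and showing conversely that every Cohen-Macaulay (hence Noetherian) analytically ramified reduced stable ring of multiplicity $\leq 2$ is such a domain, is where the real work lies; the tools are Corollary~\ref{reduced cor}, Theorem~\ref{GFF theorem}, and the ``moreover'' clause of Theorem~\ref{GFF theorem} identifying multiplicity with the cardinality of the index set. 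The remaining implications --- Bass ring or (3c) giving multiplicity $\leq 2$, and multiplicity $\leq 2$ non-reduced giving (3c) with $P$ principal --- are routine Nakayama-and-length arguments once Theorems~\ref{new Bass}, \ref{new infinite}, \ref{power thm} and Corollary~\ref{previous} are in hand.
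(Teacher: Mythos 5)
Your overall architecture matches the paper's: Sally's proposition and Theorem~\ref{power thm} for $(1)\Leftrightarrow(2)$, then stability via the two-generator property, a split on finiteness of $\overline{R}$, and Theorems~\ref{new Bass}, \ref{new infinite} and Corollary~\ref{reduced cor} for $(2)\Rightarrow(3)$. But there is one genuine gap. In the non-reduced branch of $(2)\Rightarrow(3\mathrm{c})$ you write ``since $R$ is Noetherian, $R=\wt{R}$ (Remark~\ref{new remark})''. That remark identifies $\wt{R}$ with the $M$-adic completion $\widehat{R}$, not with $R$ itself; a Noetherian local ring is not its own completion unless it is complete. So Theorem~\ref{new infinite} hands you a prime $L$ of $\widehat{R}$ with $L^2=0$ and $\widehat{R}/L$ a DVR, and you still have to descend this to $R$. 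The paper does exactly this: it sets $P=L\cap R$, checks $P^2=0$, and then must \emph{prove} that $R/P$ is a DVR and that $P$ is principal. The DVR claim is not automatic --- $R/P$ embeds in the DVR $\widehat{R}/L$, but a one-dimensional local subring of a DVR need not be one --- and the paper's argument goes through showing $P\not\subseteq mR$, $M^2\subsetneq mR$, and hence $M=mR+P$ by counting dimensions in $M/M^2$; principality of $P$ then follows from $P/MP\cong M/mR$ and Nakayama. Your ``short Nakayama argument'' for principality presupposes that $R/P$ is already known to be a DVR, which is precisely the step your proposal skips.

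Separately, your observation about clause $(3\mathrm{b})$ is a correct and worthwhile catch: a bad \emph{Noetherian} stable domain can have any multiplicity $e>2$ (Heinzer--Lantz--Shah, and Theorem~\ref{example list}(1)), so $(3\mathrm{b})\Rightarrow(2)$ fails without restricting to multiplicity at most $2$. The paper's own proof of $(3)\Rightarrow(1)$ quietly inserts ``with multiplicity $\leq 2$'' at this point, so the statement of $(3\mathrm{b})$ should indeed carry that restriction (equivalently, by the ``moreover'' clause of Theorem~\ref{GFF theorem}, the index set there is a singleton). Also, your first route to $(1)\Rightarrow(2)$ leaves the exclusion of $\dim R=0$ essentially unargued; the paper handles this with the Hilbert--Samuel degree count after noting that \emph{every} power of $M$ is two-generated, and you should do likewise before invoking Proposition~\ref{2.3} or Theorem~\ref{power thm}, both of which presuppose dimension one.
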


\begin{proof} (1) $\Rightarrow$ (2) 
 Proposition~\ref{Sally} implies that every power of $M$ is two-generated, and hence   the Hilbert polynomial of $M$ is constant. Since the degree of this polynomial is one less than the Krull dimension of $R$, the Hilbert-Samuel theorem  \cite[p.~4]{Sbook} implies that  $R$ has Krull dimension one. 
By  
 Theorem~\ref{power thm}, $\wt{R}$ is a stable ring  of multiplicity at most $2$. Since $R$ is Noetherian and has Krull dimension one, Remark~\ref{new remark} implies   $\wt{R} = \widehat{R}$. Therefore, since $\widehat{R}$ has multiplicity $2$, so does $R$.

 (2) $\Rightarrow$ (3) By Proposition~\ref{2.4}(2) every ideal of $R$ is two-generated, and hence $R$ is a stable ring by Corollary~\ref{Rush cor}. If $\overline{R}$ is a finitely generated $R$-module, then $R$ is a Bass ring by Theorem~\ref{new Bass} and $R$ satisfies (3a). 
 
 Suppose  that $\overline{R}$ is not a finitely generated $R$-module. Since $R$ is one-dim\-en\-sion\-al and Noetherian, Remark~\ref{new remark} implies $\widehat{R}  = \wt{R}$. By Theorem~\ref{new infinite} there exists a nonzero prime ideal $L$ of  $\widehat{R}$   such that $L^2 = 0$ and $\widehat{R}/L$ is a DVR.    Viewing $R$ as a subring of $\widehat{R}$, we consider two cases. 
 
 \smallskip
 
{\bf  Case 1}: $L \cap R = 0$. 

\smallskip 

In this case 
 $R$ is a domain. By Theorem~\ref{GFF theorem}, $R$ is a bad stable domain and hence (3b) holds for $R$.  
 
 \smallskip
 
{\bf  Case 2:}  $P := L \cap R$ is a nonzero prime ideal of $R$.
 
 \smallskip
 
 We show that (3c) holds. 
 Since $L^2 =0$, we have  that $P^2 =0$.  
  Since $R$ is  stable, Lemma~\ref{primary stable}(6) implies there exists $m \in M$ such that $M^2 = mM$. We claim that $P \not \subseteq mR$.  Suppose  to the contrary that 
$P \subseteq mR$. Since $P$ is a prime ideal and $m \not \in P$, it follows that $P = mP$. Hence $P \subseteq \bigcap_i m^iR = \bigcap_i M^i= 0$, a contradiction. Thus $P \not \subseteq mR$.

Next we claim that $M^2 \subsetneq mR$. Indeed, if $M^2 = mR$, then, since $m$ is a nonzerodivisor, $M$ is an invertible, hence principal, ideal of $R$. Thus $R$ is a regular local ring, hence a domain. However, $P^2 = 0$ and $P$ is nonzero, so this contradiction shows that $M^2 \subsetneq mR$.

 Now since $M^2 \subsetneq mR$, $P \not \subseteq mR$ and  $M/M^2$ is a vector space of dimension at most $2$ over $R/M$, it follows that $M = mR + P$. Thus $R/P$ is a DVR. 
 Since $M = mR + P$ and $P^2  =0$, we have 
  $P/MP = P/(mP + P^2) = P/mP = P/(mR \cap P) \cong (P+mR)/mR = M/mR$. Since $M/M^2$ has dimension at most $2$ as a vector space over $R/M$ and $M^2 \subsetneq mR$, it follows that 
  $P/MP$ has dimension  at most one as an $R/M$-vector space. By Nakayama's lemma,  $P$ is a principal ideal, and so (3c) is satisfied by $R$.  

(3) $\Rightarrow$ (1) It is clear that if $R$ is a Bass ring, then every power of $M$ is two-generated. Similarly, if $R$ is a bad stable domain with multiplicity $\leq 2$, then, by Proposition~\ref{2.4}(2), every power of $M$ is two-generated. Finally, suppose there is a nonzero principal prime ideal $P$ of $R$ with $R/P$ a DVR and $P^2 = 0$. Since $R/P$ is a DVR and $P^2 =0$, it follows that $M^2 = mM$ for some $m \in M$ 
 \cite[Lemma 3.3]{GFF}.  Since $P$ is a principal ideal and $R/P$ is a DVR, $M$ is two-generated. Thus $M^2=mM$ is also two-generated, 
 which verifies (1).  
\end{proof}

Greither \cite[Theorem 2.1]{Gre} has shown using multiplicity theory  that  a reduced   Noetherian local ring  $R$ is a Bass ring if and only if  $\overline{R}$ can be generated by two elements as an $R$-module. 
Theorem~\ref{Greither}, which uses a different method of proof, generalizes this result to rings that {\it a priori} need not be Noetherian. 




\begin{lemma}\label{is quad} If $R \subseteq S$ is an extension of  rings such that $S$ can be generated by two elements as an $R$-module, then $R \subseteq S$ is a quadratic extension.
\end{lemma}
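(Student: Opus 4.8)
The plan is to prove the lemma by reducing to the case where $S$ is generated over $R$ by the two elements $1$ and a single further element $t$, and then checking the quadratic identity by a direct expansion in $R+Rt$.

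Write $S=Ra_1+Ra_2$ and, since $1\in S$, write $1=c_1a_1+c_2a_2$ with $c_1,c_2\in R$. The first step is to observe that $c_1R+c_2R=R$. Indeed, if $c_1,c_2$ both lay in a maximal ideal $\mathfrak{m}$ of $R$, then $1\in\mathfrak{m}S$, so $\mathfrak{m}S=S$; since $S$ is a finitely generated $R$-module, Nakayama's lemma would give $(1+a)S=0$ for some $a\in\mathfrak{m}$, forcing $1=0$ in $S$, which is impossible unless $R=0$ — and the case $R=0$ (hence $S=0$) is trivial. So there are $d_1,d_2\in R$ with $c_1d_1+c_2d_2=1$. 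Put $t:=d_1a_2-d_2a_1\in S$. Using $c_1a_1+c_2a_2=1$ one computes $d_1\cdot 1-c_2t=a_1$ and $d_2\cdot 1+c_1t=a_2$, so $a_1,a_2\in R+Rt$ and therefore $S=R+Rt$. (This is just the standard fact that a unimodular row of length two completes to an invertible matrix over any commutative ring, i.e.\ a change of module coordinates replacing the generators $a_1,a_2$ by $1,t$.)

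Now assume $S=R+Rt$. Since $S$ is a ring, $t^2\in S$, say $t^2=\alpha+\beta t$ with $\alpha,\beta\in R$. Given arbitrary $x,y\in S$, write $x=p_0+p_1t$ and $y=q_0+q_1t$ with $p_i,q_i\in R$. Expanding and substituting $t^2=\alpha+\beta t$ gives $xy=(p_0q_0+\alpha p_1q_1)+(p_0q_1+p_1q_0+\beta p_1q_1)\,t$, and one checks that $(q_0+\beta q_1)x+p_0y$ has the same coefficient of $t$. Hence $xy-(q_0+\beta q_1)x-p_0y$ has zero $t$-component and so lies in $R$. Thus $xy\in xR+yR+R$; as $x,y$ were arbitrary, $R\subseteq S$ is a quadratic extension.

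The only part that is more than routine bookkeeping is the reduction $S=R+Rt$, and the key to it is that $1$ can be taken as one of two module generators of $S$ — which rests on the completion of the unimodular row $(c_1,c_2)$, itself guaranteed by Nakayama. The closing computation is entirely mechanical, and it is worth noting that it sidesteps the characteristic-$2$ obstruction that spoils a naive polarization argument: passing from $x^2\in xR+R$ (say, via the determinant trick) to cross terms by expanding $(x+y)^2$ yields only $2xy\in xR+yR+R$, whereas in the computation above the coefficient $2$ appears solely in the constant term and is harmlessly absorbed into $R$.
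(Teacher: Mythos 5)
Your proof is correct, and it reaches the key intermediate statement $S=R+Rt$ by a genuinely different route than the paper. The paper first reduces to the case of local $R$ by invoking the fact that being a quadratic extension is a local property (\cite[Lemma 3.1]{OFS}), after which Nakayama immediately yields $S=R+xR$; you instead stay global, observe via Nakayama that the row $(c_1,c_2)$ with $1=c_1a_1+c_2a_2$ must be unimodular, and complete it to a determinant-one matrix to change generators from $(a_1,a_2)$ to $(1,t)$. This buys self-containedness (no appeal to the localization lemma) and the slightly stronger global fact that $1$ can always be taken as one of the two module generators; the paper's reduction is shorter given the cited lemma. The closing expansions are essentially the same computation in both proofs: the paper absorbs $bdx^2$ into $R+bxR$ using $x^2\in R+xR$, while you write out explicit coefficients, and both correctly avoid any issue with the non-uniqueness of the representation $p_0+p_1t$ since the argument produces an identity rather than comparing coefficients. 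Two cosmetic points: the Nakayama step should conclude ``$(1+a)\cdot 1=0$ in $S$, hence $1=-a\in\mathfrak m$ in $R$, a contradiction'' rather than ``$1=0$ in $S$'' (over a non-local $R$ the element $1+a$ need not be a unit); and the phrase ``zero $t$-component'' is best read as the observation that the two exhibited expressions share the same coefficient of $t$, so their difference is visibly an element of $R$. Neither affects the validity of the argument.
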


\begin{proof} By \cite[Lemma 3.1]{OFS}, it suffices to show that $R_M \subseteq S_M$ is a quadratic extension for each maximal ideal $M$ of $R$. Thus we may assume without loss of generality that $R$ is  a local ring. 
Since $S$ can be generated by two elements, Nakayama's lemma implies that $S = R + xR$ for some $x \in S$. Let $s,t \in S$. We claim that 
 $st \in sR + tR +R$.  Write $s = a + xb$ and $t = c + xd$ for some $a,b,c,d \in R$.  Since $S = R + xR$ and $S$  is a ring,  $x^2 \in R+xR$. Hence  
\begin{eqnarray*} st & = & ac + bcx + adx  + bdx^2 \:
  \in \: R  + bxR + dxR + dx^2R \\
 & \subseteq &  R + bxR + dxR \: = \: R + sR + tR, \end{eqnarray*}
This proves $R \subseteq S$ is a quadratic extension.
\end{proof}


Recall that a {\it generalized local ring} is a local ring with finitely generated maximal ideal $M$ such that $\bigcap_{i} M^i  = 0$; cf. \cite{Cohen}. 

\begin{theorem} \label{Greither}
The following are equivalent for a one-dimensional local ring $R$ with regular maximal ideal $M$. 

\begin{itemize} 

\item[{\em (1)}] $R$ is a Bass ring.

\item[{\em (2)}] $R$ is a  
generalized local ring whose integral closure  $\overline{R}$ can  be generated by two elements as an $R$-module.  

\item[{\em (3)}] $\overline{R}$ is a Noetherian ring that can be generated by two elements as an $R$-module. 
\end{itemize}
\end{theorem}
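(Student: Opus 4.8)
The plan is to prove the equivalence $(1) \Leftrightarrow (2) \Leftrightarrow (3)$ in a cycle, using the structure theory already developed. The implication $(1) \Rightarrow (3)$ should be almost immediate: a Bass ring is by definition reduced Noetherian local with finite normalization and every ideal two-generated; in particular $\overline{R}$ is a Noetherian ring (being a finite module over the Noetherian ring $R$), and the classical fact (which follows from Proposition~\ref{2.4}(2) applied to the conductor, or directly from the definition of a Bass ring via multiplicity at most $2$) gives that $\overline{R}$ is generated by two elements over $R$. The implication $(3) \Rightarrow (2)$ is also easy: if $\overline{R}$ is generated by two elements as an $R$-module, then in particular $\overline{R}$ is a finitely generated $R$-module; since $\overline{R}$ is Noetherian and $R \subseteq \overline{R}$ is integral, the Eakin--Nagata theorem forces $R$ itself to be Noetherian, and then $R$ being a one-dimensional Noetherian local ring with regular maximal ideal has $\bigcap_i M^i = 0$ by Krull's intersection theorem, so $R$ is a generalized local ring.

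The real content is $(2) \Rightarrow (1)$. Here I would argue as follows. Assume $R$ is a generalized local ring with $\overline{R} = R + xR + yR$, i.e.\ two-generated as an $R$-module. By Lemma~\ref{is quad}, $R \subseteq \overline{R}$ is a quadratic extension. The key intermediate claim is that $R$ is finitely stable with stable maximal ideal, so that Theorem~\ref{big} applies; more specifically I want to show $\overline{R}$ is a Dedekind ring with at most two maximal ideals, which is condition (4) of Theorem~\ref{big} and hence gives that $R$ is stable with finite integral closure, whence by Theorem~\ref{new Bass} (recalling $J = \bigcap_i m^iR = 0$ since $R$ is a generalized local ring and $mR \supseteq M^t$ eventually --- actually one must check $J=0$; it follows because $R$ separated in $M$-adic topology plus $M^t \subseteq mR$ for some $t$ when $M$ is f.g.) the ring $\wt R = \widehat R$ is a Bass ring, and since $R$ is Noetherian and $M$-adically separated with $\widehat R$ a Bass ring, $R$ is reduced with multiplicity $\le 2$, i.e.\ $R$ is a Bass ring.

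To get that $\overline{R}$ is a Dedekind ring with at most two maximal ideals: since $\overline{R}$ is a finitely generated $R$-module and $R$ is a generalized local ring, by Proposition~\ref{2.2}(1) (or Eakin--Nagata) $R$ is Noetherian, so $\overline R$ is a one-dimensional Noetherian ring, integrally closed in $\QQ(R) = \QQ(\overline R)$, hence a finite product of Dedekind domains --- in particular a Dedekind ring. For the bound on the number of maximal ideals, $\overline R$ being generated by two elements over $R$ means $\overline R / M\overline R$ is spanned by two elements over $F = R/M$, so $\dim_F \overline R/M\overline R \le 2$; since $\overline R/M\overline R$ surjects onto $\overline R / \Jac\overline R$ which is a product of the residue fields of the (finitely many) maximal ideals of $\overline R$ lying over $M$ --- and each such residue field contributes dimension at least $1$ --- there are at most two maximal ideals of $\overline R$. (Alternatively invoke Proposition~\ref{at most 3} and rule out three by the dimension count; three maximal ideals would force $\dim_F \overline R/\Jac\overline R \ge 3 > 2$.) Then Theorem~\ref{big}(4)$\Rightarrow$(1) gives $R$ stable, finiteness of $\overline R$ is by hypothesis, and Theorem~\ref{new Bass} finishes the job as above.

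The main obstacle, and the step requiring the most care, is the bookkeeping around $J$ and the completions: one must verify that under hypothesis (2) we genuinely have $J = 0$ so that Theorem~\ref{new Bass} yields ``$R$ is a Bass ring'' rather than just ``$R/J$ is a Bass ring.'' This should follow from: $M$ finitely generated $\Rightarrow$ $M^t \subseteq mR$ for some $t$ (Remark~\ref{new remark}), hence the $mR$-adic and $M$-adic topologies coincide, hence $J = \bigcap_i m^iR \supseteq \bigcap_i M^{it} \cdots$ --- more precisely $J \subseteq \bigcap_i M^i = 0$ by the generalized-local-ring hypothesis. Once that is pinned down, the rest is a direct assembly of Lemma~\ref{is quad}, Theorem~\ref{big}, and Theorem~\ref{new Bass}, with Proposition~\ref{2.5}(2) available if one wants reducedness of $R$ directly from ``$R$ Noetherian with finite integral closure and regular maximal ideal.''
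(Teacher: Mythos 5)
Your cycle $(1)\Rightarrow(3)\Rightarrow(2)\Rightarrow(1)$ runs opposite to the paper's $(1)\Rightarrow(2)\Rightarrow(3)\Rightarrow(1)$, and the easy legs $(1)\Rightarrow(3)$ (conductor trick) and $(3)\Rightarrow(2)$ (Eakin--Nagata plus Krull intersection) are fine. The problem is in $(2)\Rightarrow(1)$, at the step ``by Proposition~\ref{2.2}(1) (or Eakin--Nagata) $R$ is Noetherian.'' Neither tool applies under hypothesis (2) alone: Proposition~\ref{2.2}(1) requires $R$ to be \emph{reduced}, which you have not yet established, and Eakin--Nagata requires $\overline{R}$ to be a Noetherian \emph{ring}, which is hypothesis (3), not (2) --- in your cycle you may not assume it here. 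Your fallback for reducedness, Proposition~\ref{2.5}(2), presupposes that $R$ is Noetherian, so the two facts you need each depend on the other: the argument is circular. The gap matters downstream as well, since ``one-dimensional Noetherian and integrally closed in $\QQ(R)$, hence a finite product of Dedekind domains'' also silently uses reducedness (a product of domains is reduced), and without the Dedekind conclusion you cannot invoke Theorem~\ref{big}(4).

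The circle is broken by proving reducedness directly from (2), which is exactly what the paper's $(2)\Rightarrow(3)$ step does. Since $\overline{R}$ is a finitely generated $R$-submodule of $\QQ(R)$ and $M$ is regular, the conductor $(R:\overline{R})$ contains a nonzerodivisor $m\in M$. If $r\in R$ is nilpotent, then $r/m^{i}$ is nilpotent in $\QQ(R)$, hence integral over $R$, hence lies in $\overline{R}$ for every $i$; multiplying by $m$ gives $r/m^{i-1}\in R$, i.e.\ $r\in m^{i-1}R$ for all $i$, so $r\in\bigcap_{i}m^{i}R=\bigcap_{i}M^{i}=0$ by the generalized-local-ring hypothesis (together with $M^{t}\subseteq mR$, which you already noted). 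With reducedness in hand, Proposition~\ref{2.2}(1) legitimately yields that $R$ is Noetherian, and the remainder of your assembly --- the dimension count bounding $\overline{R}$ at two maximal ideals, Lemma~\ref{is quad}, Theorem~\ref{big}(4)$\Rightarrow$(1), and Theorem~\ref{new Bass} with $J=0$ --- goes through and matches the paper's route (the paper closes via Corollary~\ref{reduced cor} rather than Theorem~\ref{new Bass}, an immaterial difference).
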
 

\begin{proof} (1) $\Rightarrow$ (2) Suppose $R$ is a Bass ring. Then $R$ is a  generalized local ring. Since  $\overline{R}$ is a finitely generated $R$-submodule of $\Q(R)$,  there is a nonzerodivisor $r \in R$ such that $r\overline{R}$ is an ideal of $R$. Since $R$ is a Bass ring, $r\overline{R}$ is a two-generated ideal of $R$. Since $r$ is a nonzerodivisor, $\overline{R}$ and $ r\overline{R}$ are isomorphic as $R$-modules. Thus $\overline{R}$ can be generated  by two elements as an $R$-module. 

(2) $\Rightarrow$ (3) Since  $\overline{R}$ is a finitely generated $R$-submodule of $\Q(R)$ and $M$ is a regular ideal, the ideal $(R:\overline{R})$  contains a nonzerodivisor  $m \in M$.  Also, since $R$ is a generalized local ring, $\bigcap_{i} m^iR = \bigcap_{i} M^i  = 0$. 
We claim that $R$ is reduced. 
The argument  is essentially that of \cite[pp.~263-264]{Ma}. Let $r$ be a  nilpotent element of $R$. 
For each $i>0$,  $r/m^i \in \overline{R}$.  
Since $m \in (R:\overline{R})$, we have  $m(r/m^i) \in R$ for all $i >0$.  Thus $ r \in \bigcap_{i>1} m^{i-1}R  = 0$, proving that $r =0$ and $R$ is reduced.  
Since the maximal ideal of $R$ is finitely generated, $R$ is a  Noetherian ring by Proposition~\ref{2.2}(1). Therefore, since $R$ is a reduced one-dimensional  Noetherian local ring with regular maximal ideal,  $\overline{R}$ is a Noetherian ring \cite[Theorem 3]{AH}.

(3) $\Rightarrow$ (1)  Given (3), to prove that $R$ is a Bass ring it suffices by Corollary~\ref{reduced cor} to show that   $R$ is a reduced stable ring. For this it is enough by Theorem~\ref{big} to prove that $R \subseteq \overline{R}$ is a quadratic extension and $\overline{R}$ is a reduced Dedekind ring with at most two maximal ideals. 
Since the $R$-module $\overline{R}$ can be generated by two elements,   $\overline{R}/(\Jac \overline{R})$ has dimension at most $2$ as an $R/M$-vector space. Thus $\overline{R}$ has at most 2 maximal ideals. 
  Since $R$ is a one-dimensional local Noetherian ring with regular maximal ideal  and $\overline{R}$  is a Noetherian ring, $\overline{R}$ is a 
  finite product of Dedekind domains \cite[Theorem 3]{AH}. Hence $\overline{R}$ is a reduced Dedekind ring. By Lemma~\ref{is quad}, $R \subseteq \overline{R}$ is a quadratic extension, and so 
  the proof is complete.
%
\end{proof}



\medskip 

\textsc{Acknowledgment.} I am grateful to the referee for a careful reading of the paper and many suggestions that have improved the presentation.
 
 \smallskip

\end{document}